\documentclass[11pt,a4paper]{scrartcl}
\usepackage[a4paper,margin=2.6cm]{geometry}
\usepackage{amsmath,amssymb,amsthm,mathtools}
\usepackage{fontspec}                 
\usepackage{microtype}
\usepackage[hidelinks]{hyperref}
\usepackage[capitalise,noabbrev]{cleveref}
\usepackage{booktabs}
\usepackage{longtable,array,calc}
\usepackage{graphicx}
\usepackage[most]{tcolorbox}
\tcbuselibrary{theorems}
\usepackage{varwidth}                 
\usepackage{tikz}
\usetikzlibrary{arrows.meta,positioning,shapes.geometric}

\makeatletter
\renewenvironment{thebibliography}[1]
  {\list{\@biblabel{\@arabic\c@enumiv}}%
        {\settowidth\labelwidth{\@biblabel{#1}}%
         \leftmargin\labelwidth
         \advance\leftmargin\labelsep
         \@openbib@code
         \usecounter{enumiv}%
         \let\p@enumiv\@empty
         \renewcommand\theenumiv{\@arabic\c@enumiv}}%
   \sloppy
   \clubpenalty4000
   \@clubpenalty \clubpenalty
   \widowpenalty4000%
   \sfcode`\.\@m}
  {\def\@noitemerr
    {\@latex@warning{Empty `thebibliography' environment}}%
   \endlist}
\providecommand{\@openbib@code}{}
\makeatother

\tcbset{varwidth boxed title*=-1cm}

\definecolor{clrThm}{HTML}{2F6FAF}
\definecolor{clrDef}{HTML}{1F7A5A}
\definecolor{clrRem}{HTML}{8A6D1F}
\definecolor{clrExa}{HTML}{6B4C9A}

\newtcbtheorem{theorem}{Theorem}{%
  enhanced, breakable, colback=clrThm!4, colframe=clrThm, coltitle=white,
  fonttitle=\bfseries, arc=2pt, boxrule=0.6pt, left=6pt, right=6pt,
  attach boxed title to top left={xshift=6pt,yshift=-2mm},
  boxed title style={colback=clrThm, arc=1pt}}{}

\newtcbtheorem[use counter from=theorem]{proposition}{Proposition}{%
  enhanced, breakable, colback=clrThm!4, colframe=clrThm, coltitle=white,
  fonttitle=\bfseries, arc=2pt, boxrule=0.6pt, left=6pt, right=6pt,
  attach boxed title to top left={xshift=6pt,yshift=-2mm},
  boxed title style={colback=clrThm, arc=1pt}}{}
\newtcbtheorem[use counter from=theorem]{lemma}{Lemma}{%
  enhanced, breakable, colback=clrThm!4, colframe=clrThm, coltitle=white,
  fonttitle=\bfseries, arc=2pt, boxrule=0.6pt, left=6pt, right=6pt,
  attach boxed title to top left={xshift=6pt,yshift=-2mm},
  boxed title style={colback=clrThm, arc=1pt}}{}
\newtcbtheorem[use counter from=theorem]{definition}{Definition}{%
  enhanced, breakable, colback=clrDef!4, colframe=clrDef, coltitle=white,
  fonttitle=\bfseries, arc=2pt, boxrule=0.6pt, left=6pt, right=6pt,
  attach boxed title to top left={xshift=6pt,yshift=-2mm},
  boxed title style={colback=clrDef, arc=1pt}}{}
\newtcbtheorem[use counter from=theorem]{remark}{Remark}{%
  enhanced, breakable, colback=clrRem!4, colframe=clrRem, coltitle=white,
  fonttitle=\bfseries, arc=2pt, boxrule=0.6pt, left=6pt, right=6pt,
  attach boxed title to top left={xshift=6pt,yshift=-2mm},
  boxed title style={colback=clrRem, arc=1pt}}{}
\newtcbtheorem[use counter from=theorem]{example}{Example}{%
  enhanced, breakable, colback=clrExa!4, colframe=clrExa, coltitle=white,
  fonttitle=\bfseries, arc=2pt, boxrule=0.6pt, left=6pt, right=6pt,
  attach boxed title to top left={xshift=6pt,yshift=-2mm},
  boxed title style={colback=clrExa, arc=1pt}}{}

\definecolor{clrPrb}{HTML}{A0522D}
\newtcbtheorem[use counter from=theorem]{problem}{Problem}{%
  enhanced, breakable, colback=clrPrb!4, colframe=clrPrb, coltitle=white,
  fonttitle=\bfseries, arc=2pt, boxrule=0.6pt, left=6pt, right=6pt,
  attach boxed title to top left={xshift=6pt,yshift=-2mm},
  boxed title style={colback=clrPrb, arc=1pt}}{}

\definecolor{clrQue}{HTML}{B03060}
\newtcbtheorem{question}{Question}{%
  enhanced, breakable, colback=clrQue!4, colframe=clrQue, coltitle=white,
  fonttitle=\bfseries, arc=2pt, boxrule=0.6pt, left=6pt, right=6pt,
  attach boxed title to top left={xshift=6pt,yshift=-2mm},
  boxed title style={colback=clrQue, arc=1pt}}{}

\newcommand{\thmref}[2]{\hyperref[:#2]{#1~\ref*{:#2}}}

\makeatletter
\newsavebox{\jts@fitbox}
\newcommand{\fitdisplay}[1]{%
  \sbox{\jts@fitbox}{\ensuremath{\displaystyle #1}}%
  \ifdim\wd\jts@fitbox>\linewidth
    \resizebox{\linewidth}{!}{\usebox{\jts@fitbox}}%
  \else
    \usebox{\jts@fitbox}%
  \fi}
\makeatother

\let\jtsOldUnderscore\_
\renewcommand{\_}{\jtsOldUnderscore\allowbreak}

\begin{document}

%
%

\definecolor{orcidgreen}{HTML}{A6CE39}
\newcommand{\orcidid}[1]{%
  \href{https://orcid.org/#1}{%
    \textcolor{orcidgreen}{\fontsize{8}{9}\selectfont iD}%
    \,\fontsize{8}{9}\selectfont #1}}

\begingroup
\centering
\leftskip=0.05\textwidth plus 1fil \rightskip=0.05\textwidth plus 1fil
    \fontsize{15}{18}\selectfont\bfseries
    The coordinate ring of the $k$-fold iterated commutator locus for
    $2 \times 2$ matrices%
    \par
\endgroup

\begingroup
\centering
\vspace{0.8em}
{\fontsize{11}{13}\selectfont Jan Snellman\hspace{0.25em}%
  \orcidid{0009-0002-6676-5068}\par}

\vspace{0.35em}

{\fontsize{9}{11}\selectfont Matematiska Institutionen, Linköpings
  Universitet, 581 83 Linköping, Sweden\par}

\vspace{0.15em}

{\fontsize{9}{11}\selectfont
  \href{mailto:jan.snellman@liu.se}{jan.snellman@liu.se}\par}
\endgroup

\vspace{1.0em}

\begin{center}
\begin{minipage}{\dimexpr\textwidth-3.2cm\relax}
  \fontsize{9.2}{11.5}\selectfont
  \begin{center}\textbf{Abstract}\end{center}
  \vspace{0.25em}
  For $2 \times 2$ matrices $A_1,\dots,A_k$, write $[A_1,\dots,A_k]$ for the
  left-normed iterated commutator $[\dots[[A_1,A_2],A_3],\dots,A_k]$, and $I_k$
  for the ideal, in the $3k$-variable reduced-coordinate polynomial ring $R_k$,
  cutting out its vanishing locus. We prove, for every $k \geq 2$ over any field
  of characteristic $\neq 2$, and as four independently-established results
  rather than one bundled claim: $I_k$ has codimension 2; $I_k$ has exactly 3
  minimal generators; $R_k / I_k$ is Cohen-Macaulay; and $I_k$ is radical. The
  last of these, together with an explicit component count resting on a
  non-containment argument, assembles into the \textbf{Primary Decomposition
  Theorem}: $I_k = P_2 \cap \cdots \cap P_k$ is an irredundant primary
  decomposition into exactly $k-1$ primes, following an explicit recursive
  block-involvement pattern. The proof identifies $I_k$ as the ideal of
  $2 \times 2$ minors of an explicit $2 \times 3$ matrix (a determinantal ideal,
  not merely determinantal-\textit{looking}), and invokes classical
  determinantal-ideal theory (Bruns-Vetter) and an explicit rank-2 Jacobian
  witness on every component (Serre's criterion) for the algebraic and
  radicality halves respectively.

  \vspace{0.9em}
  {\fontsize{8.6}{10.5}\selectfont
   \textbf{Keywords.} Iterated commutator; determinantal ideal; commuting
   variety; Cohen-Macaulay ring; primary decomposition; radical ideal; cross
   product; $\mathfrak{sl}_2$.\par}

  \vspace{0.3em}
  {\fontsize{8.6}{10.5}\selectfont
   \textbf{2020 Mathematics Subject Classification.} Primary 13C40, 14M12;
   Secondary 13H10, 15A27, 13P10.\par}
\end{minipage}
\end{center}

\vspace{0.8em}

\section{Introduction}

\(2 \times 2\) matrices \(A,B\) have commutator \(\lbrack A,B\rbrack = AB - BA\), and the \textbf{iterated} left-normed commutator of \(k\) matrices is \(\left\lbrack A_{1},\ldots,A_{k} \right\rbrack \coloneqq  \left\lbrack \ldots\left\lbrack \left\lbrack A_{1},A_{2} \right\rbrack,A_{3} \right\rbrack,\ldots,A_{k} \right\rbrack\), matching the standard lower-central-series convention for Lie algebras. This object arose out of

\begin{itemize}
\item
  a study of the corresponding formal word in the theory of fiber-counting of word-maps \(\pi:F_{n} \rightarrow G\), where \(F_{n}\) is the free group on \(n\) letters; in particular, after a close reading of Tambour's result \cite{Tambour2000}
\item
  a question in algebraic complexity theory -- how many scalar multiplications are needed to \textbf{compute} \(\left\lbrack A_{1},\ldots,A_{k} \right\rbrack\)? -- which remains genuinely open for \(k \geq 3\) and which the author is presently investigating.
\end{itemize}

The one fact from that story needed here is purely algebraic, not complexity-theoretic: an explicit bilinear formula for the ordinary \(2 \times 2\) commutator, chained across \(k\) matrices, gives an explicit polynomial parametrization of \(\left\lbrack A_{1},\ldots,A_{k} \right\rbrack\)'s reduced coordinates (\thmref{Definition}{def-wk} below), and it is the \textbf{vanishing locus} of this parametrization, as an algebraic variety and its coordinate ring, that is the actual subject of this article. One naming note: one could alternatively refer to the coordinate ring of the variety here studied as the ``ring-of-k-commuting-2x2-matrices'', using ``commuting'' in this generalized iterated-commutator-vanishing sense. This means that \(\left\lbrack A_{1},\ldots,A_{k} \right\rbrack = 0\), and not literally ``every pair of \(A_{1},\ldots,A_{k}\) commutes''. The two concepts agree at \(k = 2\), where this article's locus \textbf{is} the classical commuting variety; for \(k \geq 3\) the vanishing of the iterated bracket is the \textbf{weaker} of the two conditions, so the locus studied here properly contains the pairwise-commuting one.

\textbf{The classical commuting variety.} For \(A,B\) arbitrary (not necessarily \(2 \times 2\)) \(n \times n\) matrices, the variety \(C_{n} \coloneqq  \left\{ (A,B) \in M_{n} \times M_{n}:AB = BA \right\}\) -- the ordinary, unreduced, un-iterated case this article's own \(k = 2\) specializes to -- has been studied since the 1950s. Its irreducibility was proven independently by Motzkin and Taussky \cite{MotzkinTaussky1955} and by Gerstenhaber \cite{Gerstenhaber1961}, the latter also establishing the now-standard bound \(\dim_{\mathbb{F}}\mathbb{F}\lbrack A,B\rbrack \leq n\) (here \(\mathbb{F}\) is a field) for the subalgebra generated by two commuting matrices. Whether the coordinate ring \(\mathbb{F}\lbrack A,B\rbrack/I\), \(I\) the ideal generated by the entries of \(AB - BA\), is \textbf{reduced} and \textbf{Cohen-Macaulay} for every \(n\) is a much harder question, attributed to Michael Artin and Melvin Hochster -- an unpublished, informally-circulated conjecture with no single dateable original source \cite{ArtinHochsterConjecture}, stated precisely as ``Conjecture 1.1'' in Majidi-Zolbanin and Snapp \cite{MajidiZolbaninSnapp}. Chau \cite{Chau2024} records a more specifically dated 1984 conjecture, attributed to Hochster alone and asking only for \textbf{reducedness}, about these same commuting loci -- settled at \(n = 3\) by Thompson shortly afterwards, and open for \(n \geq 4\). Freyja Hreinsdóttir's manuscript \cite{Hreinsdottir2005}, building on her 1997 Stockholm doctoral thesis under Jan-Erik Roos, attacks the Artin-Hochster conjecture case by case at \(n = 3,4\): syzygies, Betti numbers, the canonical module, and a Koszul-dual structure theorem, verified computationally through \(n = 7\) but proven in general only in these small cases. Knutson \cite{Knutson2003} studies the minimal primes of closely related schemes from a different angle. The state of play for the \textbf{classical} commuting variety at the time of writing is therefore: Cohen-Macaulayness is

\begin{itemize}
\item
  settled by hand at \(n \leq 2\);
\item
  settled by machine computation at \(n = 3\) and \(n = 4\), as both Majidi-Zolbanin and Snapp \cite{MajidiZolbaninSnapp} and Hreinsdóttir \cite{Hreinsdottir2005} record; and
\item
  open for every \(n \geq 5\).
\end{itemize}

Reducedness is settled for \(n \leq 3\) and open for \(n \geq 4\) \cite{Chau2024}.

At \(n = 2\) specifically -- this article's \(k = 2\) base case -- the question is comparatively easy, and is in effect answered again, as a special case, by \thmref{Theorem}{thm-cm} below: \(I_{2}\) (\thmref{Definition}{def-wk}) is literally the ideal of the \(n = 2\) commuting variety, expressed in reduced (trace-removed) coordinates rather than the full matrix-entry coordinates the classical literature typically uses. The genuinely new object in this article is the \textbf{iterated} generalization to \(k \geq 3\): not the classical commuting variety at all (see the naming note above), but a structurally analogous determinantal ideal, built by chaining the same bilinear cross-product formula that governs the \(n = 2\) case. It is exactly the same \textbf{flavor} of question the Artin-Hochster conjecture asks. Is this natural, geometrically meaningful ideal Cohen-Macaulay? Is it reduced? Here those two questions are asked of a different, recursively-defined family of ideals, and this article answers them completely and unconditionally, for every \(k \geq 2\) (\thmref{Theorem}{thm-primarydecomp}), rather than case by case or conjecturally. In that sense the results below can be read as a fully worked structural analogue of the still-open classical question, in a setting simple enough (\(n = 2\), but \(k\) unbounded) to admit a complete answer.

\section{Notation}

We present the notation used across more than one later result, in order of first appearance. Symbols local to a single proof are omitted. Throughout, \(\mathbb{F}\) denotes the ambient field; it is fixed in \cref{sec-setup} to be of characteristic \(\neq 2\), and nothing in this section depends on that.

\textbf{The following typographic convention is used throughout.} An \textbf{overline} denotes a \textbf{vector} -- an element of \(\mathbb{F}^{3}\) (or, for \(R_{k}\) itself, of the free module \(R_{k}^{3}\)) -- while the \textbf{same letter, plain, with a two-index comma subscript}, denotes one of that vector's three individual scalar \textbf{components}. So \({\overline{v}}_{i} \in \mathbb{F}^{3}\) is block \(i\)'s whole reduced-coordinate vector, while its three components are the plain scalars \(v_{i,1},v_{i,2},v_{i,3} \in \mathbb{F}\) (equivalently \(v_{i,j}\) for \(j = 1,2,3\)) -- never overlined, since a single scalar entry is not itself a vector. The same convention applies to \({\overline{w}}_{j}\) versus its components \(w_{j,1},w_{j,2},w_{j,3}\).

\begin{longtable}[]{@{}>{\centering\arraybackslash}p{\dimexpr(\linewidth-6\tabcolsep)/3\relax}>{\raggedright\arraybackslash}p{\dimexpr(\linewidth-6\tabcolsep)/3\relax}>{\raggedright\arraybackslash}p{\dimexpr(\linewidth-6\tabcolsep)/3\relax}@{}}
\toprule\noalign{}
\endhead
\bottomrule\noalign{}
\endlastfoot
\textbf{Symbol} & \textbf{Meaning} & \textbf{Defined in} \\
\(\overline{v}\) (also \({\overline{v}}_{i}\); components \(v_{i,1},v_{i,2},v_{i,3}\), or \(v_{i,j}\)) & reduced-coordinate vector in \(\mathbb{F}^{3}\), and its blocks & \S{}3, Setup \\
\(S\) & the sign twist \(S = \text{ diag}( - 1, - 1,1)\), used both as a map and as a matrix & \S{}3, Setup (\thmref{Theorem}{thm-basic-identity}) \\
\(\mathbb{F}\) & ambient field, characteristic \(\neq 2\); \(\overline{\mathbb{F}}\) a fixed algebraic closure & \S{}3, opening paragraph; \(\overline{\mathbb{F}}\) at \thmref{Remark}{rem-base-field} \\
\(R_{k}\) & the polynomial ring \(\mathbb{F}\left\lbrack {\overline{v}}_{1},\ldots,{\overline{v}}_{k} \right\rbrack\) & \thmref{Definition}{def-wk} \\
\(I_{k}\) & the defining ideal, \(I_{k} \subset R_{k}\) & \thmref{Definition}{def-wk} \\
\({\overline{w}}_{j}\) & (a) the recursive vector of \textbf{polynomials}, an element of \(R_{j}^{3}\) (its \textbf{evaluations} lie in \(\mathbb{F}^{3}\)); (b) as a polynomial map \(\mathbb{A}^{3j} \rightarrow \mathbb{A}^{3}\) & (a) \thmref{Definition}{def-wk}; (b) \thmref{Lemma}{lem-nondeg} \\
\(U_{j}\) & the Zariski-open locus where \({\overline{w}}_{j} \neq 0\) & \thmref{Lemma}{lem-nondeg} \\
\(\pi\) & the projection \(\mathbb{A}^{3k} \rightarrow \mathbb{A}^{3(k - 1)}\) forgetting \({\overline{v}}_{k}\) & \thmref{Definition}{def-y1y2} \\
\(\Phi\) & the map \(\mathbb{A}^{3(k - 1)} \times \mathbb{A}^{1} \rightarrow \mathbb{A}^{3k}\) & \thmref{Definition}{def-y1y2} \\
\(Y_{1}\), \(Y_{2}\) & the two strata of \(V\left( I_{k} \right)\) & \thmref{Definition}{def-y1y2} \\
\(P_{j}\) (\(j = 2,\ldots,k\)) & the primary components of \(I_{k}\) & \thmref{Definition}{def-primecomponents} \\
\(J_{k}\) & the Jacobian of \({\overline{w}}_{k}\) & \thmref{Lemma}{lem-jacrecursion} \\
\(\lbrack u\rbrack_{\times}\) & skew matrix with \(\lbrack u\rbrack_{\times}v = u \times v\) & \thmref{Lemma}{lem-jacrecursion} \\
\end{longtable}

\section{Setup: the signed encoding and the defining ideal}\label{sec-setup}

Throughout, \(\mathbb{F}\) is a field of characteristic \(\neq 2\), and all matrices, vectors and polynomial rings are over \(\mathbb{F}\) unless said otherwise. (Where exactly that characteristic hypothesis is spent is a question in its own right, taken up in \thmref{Problem}{prob-char2} at the end of this section; the displayed examples below are computed over \(\mathbb{Q}\), but nothing in the definitions is special to it.)

\textbf{Step 1: discard the scalar part.} Write \(A = \begin{pmatrix}
a_{11} & a_{12} \\
a_{21} & a_{22}
\end{pmatrix}\) as the sum of its scalar part \(\left( \text{tr}(A)/2 \right)I\) and its traceless remainder \begin{equation}\fitdisplay{A_{0} \coloneqq  A - \left( \text{tr}(A)/2 \right)I = \begin{pmatrix}
d_{a}/2 & a_{12} \\
a_{21} & - d_{a}/2
\end{pmatrix},\quad d_{a} \coloneqq  a_{11} - a_{22}.}\end{equation} Scalar matrices are central, so they never contribute to a commutator: \begin{equation}\fitdisplay{\lbrack cI,B\rbrack = cIB - BcI = cB - cB = 0}\end{equation} for every \(B\) and every scalar \(c\), hence \(\lbrack A,B\rbrack = \left\lbrack A_{0},B_{0} \right\rbrack\) identically -- no information relevant to \textbf{any} bracket is lost by discarding the scalar part, so from here on only the traceless remainder matters.

\textbf{Step 2: encode the traceless remainder.} A traceless matrix \(\begin{pmatrix}
x & y \\
z & - x
\end{pmatrix}\) has three free entries. Rather than reading them off directly (\(x,y,z\) themselves -- shown in \thmref{Remark}{rem-why-D} below to be a strictly \textbf{worse} choice, forcing a swap \textbf{and} two separate factors of 2), take \begin{equation}\fitdisplay{\text{ enc}(A) \coloneqq  \left( a_{11} - a_{22},a_{12} + a_{21},a_{12} - a_{21} \right) \in \mathbb{F}^{3},}\end{equation} the traceless \((1,1)\)-entry doubled. On a traceless matrix written as above this reads \begin{equation}\fitdisplay{\text{ enc }\begin{pmatrix}
x & y \\
z & - x
\end{pmatrix} = (2x,y + z,y - z),}\end{equation} and both forms are worth having in view: the first is what one applies to a general \(A\), the second is what one reads when a point of \(\mathbb{A}^{3}\) has to be produced or checked by hand. (On the traceless part \(a_{22} = - a_{11}\), so \(d_{a} = a_{11} - a_{22} = 2a_{11}\): the quantity \(d_{a}\) of Step 1 is literally twice that entry.) The remaining two coordinates are \(A\)'s two off-diagonal entries replaced by their sum and difference. As with the naive encoding \(\text{red}\) of \thmref{Remark}{rem-why-D}, no division is needed to \textbf{compute} \(\text{enc}\) -- only its inverse (recovering \(a_{12},a_{21}\) individually as \(\frac{v_{2} + v_{3}}{2}\) and \(\frac{v_{2} - v_{3}}{2}\)) needs \(2\) invertible, which is the standing hypothesis the article already carries, not a new one.

\textbf{Step 3: form the commutator.} Given traceless \(A_{0} = \begin{pmatrix}
x & y \\
z & - x
\end{pmatrix}\) and \(B_{0} = \begin{pmatrix}
u & v \\
w & - u
\end{pmatrix}\), the ordinary matrix commutator \(\left\lbrack A_{0},B_{0} \right\rbrack = A_{0}B_{0} - B_{0}A_{0}\) is again traceless automatically, since \(\text{tr}(PQ) = \text{tr}(QP)\) for any \(P,Q\) and therefore \(\text{tr}\left( \lbrack P,Q\rbrack \right) = 0\) always, so it is again encodable by \(\text{enc}\).

\textbf{Step 4: the encoded bracket is the cross product, up to one fixed sign twist.}

For the following theorem, the article's standing hypothesis \(\text{char }\mathbb{F} \neq 2\) is deliberately not used: the identity holds over every commutative ring.

\begin{theorem}{the basic identity}{thm-basic-identity} For traceless \(2 \times 2\) matrices \(A_{0},B_{0}\) over \(\mathbb{F}\), with no hypothesis on the characteristic, \begin{equation}\fitdisplay{\text{ enc}\left( \left\lbrack A_{0},B_{0} \right\rbrack \right) = {S\left( \text{enc}\left( A_{0} \right) \times \text{ enc}\left( B_{0} \right) \right)},\quad S \coloneqq  \begin{pmatrix}
 - 1 & 0 & 0 \\
0 & - 1 & 0 \\
0 & 0 & 1
\end{pmatrix}.}\end{equation}

\end{theorem}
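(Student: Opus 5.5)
The plan is a direct coordinate computation. Both sides of the identity are polynomial in the six free entries of \(A_{0}\) and \(B_{0}\), with integer coefficients. It therefore suffices to prove it as an identity in \(\mathbb{Z}[x,y,z,u,v,w]\), and it then specializes to every commutative ring, in particular to \(\mathbb{F}\) of any characteristic. No division occurs anywhere: \(\text{enc}\) only uses sums and differences of entries. So the hypothesis \(\text{char}\,\mathbb{F} \neq 2\) is never needed, as the statement says.

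\textbf{Step 1 (left-hand side).} Write \(A_{0} = \begin{pmatrix} x & y \\ z & -x \end{pmatrix}\) and \(B_{0} = \begin{pmatrix} u & v \\ w & -u \end{pmatrix}\), as in Step 3 of the setup, and expand \(C \coloneqq A_{0}B_{0} - B_{0}A_{0}\) entrywise. I expect
\[
C_{11} = yw - zv, \quad C_{12} = 2(xv - yu), \quad C_{21} = 2(zu - xw), \quad C_{22} = zv - yw.
\]
Applying the general form of \(\text{enc}\) to \(C\) then gives
\[
\text{enc}(C) = \bigl(2(yw - zv),\; 2(xv - yu + zu - xw),\; 2(xv - yu - zu + xw)\bigr).
\]

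\textbf{Step 2 (right-hand side).} Take \(a = \text{enc}(A_{0}) = (2x, y+z, y-z)\) and \(b = \text{enc}(B_{0}) = (2u, v+w, v-w)\). Compute the three components of \(a \times b\) from the usual formula \((a_{2}b_{3} - a_{3}b_{2},\; a_{3}b_{1} - a_{1}b_{3},\; a_{1}b_{2} - a_{2}b_{1})\). The first component collapses to \(2zv - 2yw\), the second to \(2uy - 2uz - 2xv + 2xw\), and the third to \(2xv + 2xw - 2uy - 2uz\). Applying \(S\) negates the first two components and fixes the third.

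\textbf{Step 3 (compare).} Match the three components of \(S(a \times b)\) with those of \(\text{enc}(C)\) term by term. The only real obstacle is sign and index bookkeeping in the two expansions, in particular keeping track of which cross-product components pick up the sign from \(S\). To guard against an error, I would double-check by evaluating both sides on the basis pairs, e.g.\ \(A_{0} = E_{12}\), \(B_{0} = E_{21}\), and \(A_{0} = \operatorname{diag}(1,-1)\), \(B_{0} = E_{12}\). Both sides are bilinear in \((A_{0}, B_{0})\), so checking all pairs of basis vectors of the traceless matrices would in fact give an alternative complete proof.
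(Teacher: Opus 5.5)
Your proposal is correct and takes the paper's route: expand both sides symbolically as integer polynomials, match them term by term, and let the identity descend to every commutative ring, so the characteristic never enters. Your expansions of $\text{enc}(C)$ and $S(a\times b)$ do agree component by component. The only difference is that the paper expands over all eight entries $a_{11},\ldots,b_{22}$ of general matrices, so its check also covers non-traceless $A,B$. You restrict to the six-parameter traceless family, which is all the statement requires and is valid in every characteristic since $a_{22}=-a_{11}$ parametrizes trace zero.
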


\begin{proof}[{Proof of \thmref{Theorem}{thm-basic-identity}}] The proof is a direct computation, carried out symbolically over generic entries \(a_{11},\ldots,b_{22}\) rather than checked at sampled points, in anc/code/verify\_lorentzian\_encoding\_identity.sage. Both sides, expanded in the 8 free entries, agree term by term; they are displayed in full in \cref{app-identity}.

Both sides are integer polynomials in those entries, so the identity descends to every commutative ring. It was also checked directly over \(\mathbb{Z}\) and, separately, over \(\mathbb{F}_{2}\) --- where it still holds --- in anc/code/referee\_round4\_setup\_determinantal\_check.sage, precisely to confirm that this step is not one of the places where \(\text{char }\mathbb{F} \neq 2\) is spent (\thmref{Question}{qst-char2}).

\end{proof}

\begin{remark}{why this particular twist, and not some other one}{rem-why-D} There are three concrete respects in which \(S\) improves on the naive reshuffle \(\tau(a,b,c) = (b,a,2c)\) that the encoding \(\text{red}(A) \coloneqq  \left( a_{12},a_{21},d_{a} \right)\) forces, each checked, not merely asserted:

\begin{enumerate}
\item
  \textbf{\(S\) does not mix coordinates.} \(\tau\) swaps two of its three inputs; \(S\) acts diagonally, one coordinate at a time. Consequence: \(\text{red}\) would need a page-length remark (``why \({\overline{w}}_{j}\) carries no outer \(\tau\)'') to justify that rescaling+swapping a generating set by an invertible linear map doesn't change the ideal generated. Here the analogous fact is immediate: \textbf{individually} negating two of three generators obviously leaves the ideal unchanged, generator by generator -- no swap to account for. (Still verified computationally, not merely asserted: \(\text{ideal}\left( {\overline{w}}_{2} \right) = \text{ ideal}\left( S{\overline{w}}_{2} \right)\), confirmed by direct Gröbner-basis equality in anc/code/verify\_encoding\_gl3\_equivariance.sage the three generators are written out in \cref{app-remark2}.)
\item
  \textbf{\(S\) is an involution; \(\tau\) is not.} \(S^{2} = I\) exactly (immediate from the diagonal entries being \(\pm 1\)), while \(\tau^{2} = \text{ diag}(1,1,4) \neq I\).
\item
  \textbf{\(S\) is not an arbitrary matrix that happens to work -- it is (up to a constant) the matrix of \(\mathfrak{sl}_{2}\)'s own Killing form, in these coordinates.} For traceless \(A_{0} = \begin{pmatrix}
  x & y \\
  z & - x
  \end{pmatrix}\), \(\det(A_{0}) = - x^{2} - yz\), an \textbf{indefinite} quadratic form (never definite: \(- yz = {- \left( (y + z)/2 \right)}^{2} + \left( (y - z)/2 \right)^{2}\) is a genuine difference of squares whenever \(2 \neq 0\)). Checked directly: \(- {\text{enc}(A)}_{1}^{2} - {\text{ enc}(A)}_{2}^{2} + {\text{ enc}(A)}_{3}^{2} = 4\det(A_{0})\), exactly (Sage, symbolic; \cref{app-remark2}). So \(S\)'s sign pattern \textbf{is} \(\det\)'s own signature, read off in these coordinates -- not a bookkeeping accident. (Lie-theoretically: \(\mathfrak{sl}_{2}\left( \mathbb{R} \right) \cong \mathfrak{so}(2,1)\), the Lorentz algebra, not the compact \(\mathfrak{so}(3)\) one gets only after adjoining \(i\) -- classical, not cited to a specific source here, the same way this article leaves certain folklore facts uncited rather than inventing a citation.) Over \(\mathbb{R}\), or over any formally real field such as \(\mathbb{Q}\), that sign cannot be removed by \textbf{any} invertible linear re-encoding: \(- x^{2} - y^{2} + z^{2}\) is isotropic, vanishing at \((1,0,1)\), whereas \(x^{2} + y^{2} + z^{2}\) is anisotropic, since over a formally real field a sum of squares vanishes only when every term does -- and isotropy is manifestly preserved by invertible linear substitution. (This is the content of Sylvester's law of inertia in the case at hand; the law itself is a statement about \(\mathbb{R}\).) In that precise sense \(S\)'s single-sign twist is already the simplest one achievable, not merely simpler than \(\tau\) by luck.

  Over a general field the claim genuinely fails, and it fails already in the smallest interesting case, so the restriction above is not idle caution: over \(\mathbb{F}_{3}\), where \(- 1\) is \textbf{not} a square, \(\text{diag}( - 1, - 1,1)\) is nevertheless congruent to the identity form, via \(T = \begin{pmatrix}
  0 & 0 & 1 \\
  1 & 1 & 0 \\
  1 & 2 & 0
  \end{pmatrix}\) (of determinant \(1\)) with \(T^{\top}T = \text{ diag}(2,2,1) = \text{ diag}( - 1, - 1,1)\). So ``\(- 1\) is not a square'' is \textbf{not} the right criterion; formal reality is.
\end{enumerate}

The ``obvious'' un-permuted reading of the matrix entries, \(\text{raw}(A) \coloneqq  (x,y,z)\), is worse than either encoding: the twist it forces is \(\text{swap}(2,3) \cdot \text{ diag}(1,2,2)\) -- a coordinate swap \textbf{and} two separate factors of \(2\). A different escape, adjoining \(i\) to get a literal untwisted cross product (\(\mathfrak{su}(2) \cong \mathfrak{so}(3)\) in place of \(\mathfrak{so}(2,1)\)), is a genuine alternative but a strictly narrower one, since it constrains the field.

\end{remark}

\begin{definition}{reduced iterated commutator, signed encoding}{def-wk} \begin{equation}\fitdisplay{{\overline{w}}_{2} \coloneqq  {\overline{v}}_{1} \times {\overline{v}}_{2},\quad{\overline{w}}_{j} \coloneqq  {S\left( {\overline{w}}_{j - 1} \right)} \times {\overline{v}}_{j}\quad(j = 3,\ldots,k).}\end{equation} Let \(R_{k} \coloneqq  \mathbb{F}\left\lbrack {\overline{v}}_{1},\ldots,{\overline{v}}_{k} \right\rbrack\) (\({\overline{v}}_{i} \in \mathbb{F}^{3}\), \(3k\) variables) and \(I_{k} \subset R_{k}\) the ideal generated by the three components of \({\overline{w}}_{k}\) -- the ideal cutting out \(\left\lbrack A_{1},\ldots,A_{k} \right\rbrack = 0\) in reduced coordinates. Two deliberate understatements are worth flagging. First, \(I_{k}\) is at this point only the \textbf{generated} ideal; that it is also the full vanishing ideal of that locus is \thmref{Theorem}{thm-radical}, in \S{}7 below, and is not to be assumed before then. Second, the encoded bracket is strictly \(S\left( {\overline{w}}_{k} \right)\), not \({\overline{w}}_{k}\) -- but \(S\) merely negates two of three coordinates, so the two triples generate the same ideal, and it is cleaner to carry the untwisted one.

\end{definition}

To spell out exactly what this means: ``\(R_{k} \coloneqq  \mathbb{F}\left\lbrack {\overline{v}}_{1},\ldots,{\overline{v}}_{k} \right\rbrack\)'' is shorthand for \textbf{the polynomial ring in the \(3k\) scalar variables \(v_{i,j}\) (\(i = 1,\ldots,k\), \(j = 1,2,3\))}, grouped into \(k\) blocks of 3 -- \textbf{not} a ring with \(k\) vector-valued generators (a polynomial ring's generators are always scalars). At \(k = 2\), written out in full: \begin{equation}\fitdisplay{R_{2} = \mathbb{F}\left\lbrack v_{1,1},v_{1,2},v_{1,3},v_{2,1},v_{2,2},v_{2,3} \right\rbrack,}\end{equation} six ordinary scalar indeterminates, with \({\overline{v}}_{1} = \left( v_{1,1},v_{1,2},v_{1,3} \right)\) and \({\overline{v}}_{2} = \left( v_{2,1},v_{2,2},v_{2,3} \right)\) merely a notational grouping of them into two blocks of three -- convenient because \({\overline{w}}_{2} = {\overline{v}}_{1} \times {\overline{v}}_{2}\) is most naturally written as a cross product of two block-vectors, not as some formula in six unrelated scalars.

Each \({\overline{w}}_{j}\) is a triple of polynomials, i.e. \({\overline{w}}_{j} \in R_{j}^{3}\) (the free rank-3 module over \(R_{j}\)) -- equivalently, exactly as recorded in the Notation table's entry (b) for \({\overline{w}}_{j}\), the polynomial map \({\overline{w}}_{j}:\mathbb{A}^{3j} \rightarrow \mathbb{A}^{3}\) \textbf{given by substitution}: evaluating each of \({\overline{w}}_{j}\)'s three polynomial components at a point \(\left( {\overline{v}}_{1},\ldots,{\overline{v}}_{j} \right) \in \mathbb{A}^{3j}\). Concretely at \(k = 2\), writing out all six variables, \({\overline{w}}_{2}\) is the substitution map \begin{equation}\fitdisplay{\begin{pmatrix}
v_{1,1} \\
v_{1,2} \\
v_{1,3} \\
v_{2,1} \\
v_{2,2} \\
v_{2,3}
\end{pmatrix} \in \mathbb{A}^{6}\quad \mapsto \quad\begin{pmatrix}
v_{1,2}v_{2,3} - v_{1,3}v_{2,2} \\
v_{1,3}v_{2,1} - v_{1,1}v_{2,3} \\
v_{1,1}v_{2,2} - v_{1,2}v_{2,1}
\end{pmatrix} \in \mathbb{A}^{3},}\end{equation} and \(I_{k}\) (\thmref{Definition}{def-wk}) is, equivalently, the ideal generated by the three coordinate functions of \({\overline{w}}_{k}:\mathbb{A}^{3k} \rightarrow \mathbb{A}^{3}\) pulled back along this substitution.

\begin{example}{\({\overline{w}}_{k}\) and \(I_{k}\), explicitly, for \(k = 2,3,4\)}{ex-wk-signed} Everything below was computed directly (anc/code/compute\_signed\_encoding\_examples.sage) rather than derived by hand. At \(k = 2\), the three components of \({\overline{w}}_{2} = {\overline{v}}_{1} \times {\overline{v}}_{2}\): \begin{equation}\fitdisplay{v_{1,2}v_{2,3} - v_{1,3}v_{2,2},\quad v_{1,3}v_{2,1} - v_{1,1}v_{2,3},\quad v_{1,1}v_{2,2} - v_{1,2}v_{2,1}.}\end{equation} (This is what \(\text{red}\) would give at this stage too -- the two encodings agree exactly at the very first stage, since neither \(\tau\) nor \(S\) is applied yet.) At \(k = 3\), the three components of \({\overline{w}}_{3} = {S\left( {\overline{w}}_{2} \right)} \times {\overline{v}}_{3}\) -- now with \textbf{every coefficient equal to \(\pm 1\)}, no factor of 2 anywhere, unlike the \(\tau\)-twisted \({\overline{w}}_{3}\) that \(\text{red}\) produces: \begin{equation}\fitdisplay{v_{1,2}v_{2,1}v_{3,2} - v_{1,1}v_{2,2}v_{3,2} - v_{1,3}v_{2,1}v_{3,3} + v_{1,1}v_{2,3}v_{3,3},}\end{equation} \begin{equation}\fitdisplay{- v_{1,2}v_{2,1}v_{3,1} + v_{1,1}v_{2,2}v_{3,1} - v_{1,3}v_{2,2}v_{3,3} + v_{1,2}v_{2,3}v_{3,3},}\end{equation} \begin{equation}\fitdisplay{v_{1,3}v_{2,1}v_{3,1} - v_{1,1}v_{2,3}v_{3,1} + v_{1,3}v_{2,2}v_{3,2} - v_{1,2}v_{2,3}v_{3,2},}\end{equation} and \(I_{3} \coloneqq  \left( w_{3,1},w_{3,2},w_{3,3} \right)\). At \(k = 4\), \({\overline{w}}_{4} = {S\left( {\overline{w}}_{3} \right)} \times {\overline{v}}_{4}\): again every coefficient is \(\pm 1\); the first component of \({\overline{w}}_{4}\) is \begin{equation}\fitdisplay{- v_{1,3}v_{2,1}v_{3,1}v_{4,2} + v_{1,1}v_{2,3}v_{3,1}v_{4,2} - v_{1,3}v_{2,2}v_{3,2}v_{4,2} + v_{1,2}v_{2,3}v_{3,2}v_{4,2}}\end{equation} \begin{equation}\fitdisplay{+ v_{1,2}v_{2,1}v_{3,1}v_{4,3} - v_{1,1}v_{2,2}v_{3,1}v_{4,3} + v_{1,3}v_{2,2}v_{3,3}v_{4,3} - v_{1,2}v_{2,3}v_{3,3}v_{4,3},}\end{equation} the other two of the same shape (omitted for space; all three components of \({\overline{w}}_{4}\), and \(I_{4} \coloneqq  \left( w_{4,1},w_{4,2},w_{4,3} \right)\), in anc/code/compute\_signed\_encoding\_examples.sage's output). The same data at \(k = 6\), where the generators reach 32 terms apiece and the \(\pm 1\) pattern is still unbroken, is in \cref{app-i6}.

\end{example}

The displayed examples above were computed over \(\mathbb{Q}\), but every result below holds over the arbitrary field \(\mathbb{F}\) of characteristic \(\neq 2\) fixed at the start of this section. Where that hypothesis is actually spent is not where one first expects, and the answer leaves a question we are happy to hand on:

\begin{problem}{characteristic 2}{prob-char2} The hypothesis is \textbf{not} spent on \(S\). Since \(\det S = 1\), the twist is invertible over every field, and \thmref{Remark}{rem-why-D}'s ``an invertible change of generators does not change the ideal'' needs no hypothesis whatsoever. Under the naive reshuffle it would: \(\det\tau = - 2\), so \(\tau\) is singular in characteristic 2 precisely.

It is spent in \textbf{Step 1}, earlier than Step 2's division by \(2\) and more seriously. In characteristic 2 one has \(\text{tr}(cI) = 2c = 0\), so \textbf{scalar matrices are themselves traceless}. The splitting \(A = \left( \text{tr}(A)/2 \right)I + A_{0}\) then has no meaning at all: the scalars do not sit beside a complement, they sit \textbf{inside} the traceless matrices.

The bracket is unbothered --- scalars are central in every characteristic --- so \(\lbrack \cdot , \cdot \rbrack\) still descends to the three-dimensional quotient \(\mathfrak{gl}_{2}/\left( \mathbb{F} \cdot I \right)\). Whether the rest of the article descends with it is \thmref{Question}{qst-char2} below.

\end{problem}

\begin{question}{a twisted-cross-product model in characteristic 2}{qst-char2} Over a field of characteristic 2, is there a linear isomorphism \(\mathfrak{gl}_{2}/\left( \mathbb{F} \cdot I \right) \cong \mathbb{F}^{3}\) carrying the induced bracket to a twisted cross product \(E( \cdot \times \cdot )\) for some invertible \(E\)? And if so, do \thmref{Theorem}{thm-codim}, \thmref{Theorem}{thm-mingens}, \thmref{Theorem}{thm-cm}, \thmref{Theorem}{thm-radical} and \thmref{Theorem}{thm-primarydecomp} hold verbatim?

\end{question}

What makes \thmref{Question}{qst-char2} more than idle curiosity: \textbf{every polynomial displayed in \thmref{Example}{ex-wk-signed} has coefficients \(\pm 1\) only}. No \(2\) appears anywhere in the output. Whatever the obstruction is, it lives entirely in the setup and not in the ideal --- which is exactly the situation in which a different setup might dissolve it.

\textbf{Computational evidence, bearing on the second half of \thmref{Question}{qst-char2} only.} Nothing in \thmref{Definition}{def-wk}'s recursion needs a characteristic hypothesis: \({\overline{w}}_{2} = {\overline{v}}_{1} \times {\overline{v}}_{2}\) and \({\overline{w}}_{j} = {S\left( {\overline{w}}_{j - 1} \right)} \times {\overline{v}}_{j}\) define an ideal \(I_{k} \subset R_{k}\) over \textbf{any} field. Taking that recursion as the definition and running the four results in characteristic 2 anyway, for \(k = 2,\ldots,5\) over \(\mathbb{F}_{2}\) and \(\mathbb{F}_{4}\): the codimension is \(2\), the three generators are linearly independent over the base field, \(I_{k}\) is radical, and \(\text{pd}\left( R_{k}/I_{k} \right) = 2\), so \(R_{k}/I_{k}\) is Cohen-Macaulay --- in every case, with no deviation whatever from the characteristic-zero behaviour (anc/code/verify\_over\_finite\_fields.sage and anc/code/verify\_cm\_over\_finite\_fields.m2 the same runs also cover \(\mathbb{F}_{3},\mathbb{F}_{5},\mathbb{F}_{7},\mathbb{F}_{9}\) and \(\mathbb{Q}\)). The \textbf{conclusions}, then, appear to survive characteristic 2 untouched --- which is evidence for the paragraph above, not against it.

What no computation of this kind can settle is the \textbf{first} half of \thmref{Question}{qst-char2}: whether a twisted-cross-product model of \(\mathfrak{gl}_{2}/\left( \mathbb{F} \cdot I \right)\) exists there at all, so that this \(I_{k}\) is once again the ideal of an iterated-commutator locus rather than merely a recursion that happens to go on behaving well. That is where the exercise lies.

\section{The determinantal structure and the generator count}

\begin{theorem}{determinantal structure}{thm-determinantal} For \(k \geq 3\), \(I_{k}\) is the ideal of \(2 \times 2\) minors of the \(2 \times 3\) matrix \begin{equation}\fitdisplay{\begin{pmatrix}
{S\left( {\overline{w}}_{k - 1} \right)} \\
{\overline{v}}_{k}
\end{pmatrix},}\end{equation} i.e. \(V\left( I_{k} \right)\) is the preimage, under \begin{equation}\fitdisplay{\left( {\overline{v}}_{1},\ldots,{\overline{v}}_{k} \right) \mapsto \begin{pmatrix}
{S\left( {\overline{w}}_{k - 1}\left( {\overline{v}}_{1},\ldots,{\overline{v}}_{k - 1} \right) \right)} \\
{\overline{v}}_{k}
\end{pmatrix},}\end{equation} of the classical rank-\(\leq 1\) determinantal variety of \(2 \times 3\) matrices. (At \(k = 2\) the same statement holds with \(S\left( {\overline{w}}_{k - 1} \right)\) replaced by \({\overline{v}}_{1}\) itself, matching \thmref{Definition}{def-wk}'s base case \({\overline{w}}_{2} = {\overline{v}}_{1} \times {\overline{v}}_{2}\) directly.)

\end{theorem}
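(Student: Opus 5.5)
The statement is essentially a rereading of \thmref{Definition}{def-wk}, so the plan is to unwind the definition and invoke the elementary fact that the components of a cross product are the signed $2\times 2$ minors of the two stacked vectors. First, for arbitrary vectors $\overline{a},\overline{b}$ in $T^{3}$, $T$ any commutative ring, the three components of $\overline{a}\times\overline{b}$ are
\begin{equation}
a_2b_3-a_3b_2,\quad -(a_1b_3-a_3b_1),\quad a_1b_2-a_2b_1 .
\end{equation}
These are exactly the three maximal minors of $\begin{pmatrix}\overline{a}\\ \overline{b}\end{pmatrix}$ (columns $\{2,3\}$, $\{1,3\}$, $\{1,2\}$), up to the sign on the middle one. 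Negating a generator does not change the generated ideal. Hence the ideal generated by the components of $\overline{a}\times\overline{b}$ equals the ideal of $2\times 2$ minors of the $2\times 3$ matrix with rows $\overline{a},\overline{b}$. This step uses no hypothesis on the characteristic.

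Second, I specialise this to $T=R_k$. For $k\ge 3$, take $\overline{a}=S(\overline{w}_{k-1})\in R_{k-1}^3\subset R_k^3$ and $\overline{b}=\overline{v}_k$. By \thmref{Definition}{def-wk}, $\overline{w}_k=S(\overline{w}_{k-1})\times\overline{v}_k$, and $I_k$ is generated by its three components. The first step therefore identifies $I_k$ with the ideal of $2\times2$ minors of the displayed matrix. At $k=2$ I take $\overline{a}=\overline{v}_1$, $\overline{b}=\overline{v}_2$, which gives the parenthetical base case verbatim.

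Third, I derive the geometric reformulation. The ideal of $2\times 2$ minors of a generic $2\times 3$ matrix $X=(x_{ij})$ cuts out the classical rank-$\le 1$ determinantal variety. The displayed morphism $\mathbb{A}^{3k}\to\mathbb{A}^{6}$ pulls each coordinate $x_{ij}$ back to the corresponding polynomial entry of the matrix. So the pullback of the ideal of minors is generated by the pulled-back minors, which is $I_k$ by the second step. The zero set of a pulled-back ideal is the preimage of the zero set, which gives the claim about $V(I_k)$ as sets, and in fact scheme-theoretically.

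I do not expect a real obstacle. The only point needing care is bookkeeping: the sign on the middle minor, and the observation that $S$ acts on $\overline{w}_{k-1}$ only inside the first row, so that no further twist has to be absorbed into the ideal.
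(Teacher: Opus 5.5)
Your proof is correct and is the paper's own argument: the components of a cross product are, up to sign, the $2\times 2$ minors of the stacked $2\times 3$ matrix, and the recursion of \thmref{Definition}{def-wk} builds $\overline{w}_{k}$ as exactly such a cross product, with first row $S(\overline{w}_{k-1})$ and second row $\overline{v}_{k}$. You spell out two things the paper leaves implicit, the sign on the middle minor and the pullback argument behind the ``i.e.''\ preimage reformulation; the paper's extra computational checks and its appeal to \thmref{Theorem}{thm-basic-identity} concern the link to actual matrix commutators, which the statement itself does not need.
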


\begin{proof}[{Proof of \thmref{Theorem}{thm-determinantal}}] The claim is immediate from two facts taken together. The three components of a cross product \(a \times b\) are, up to sign, the \(2 \times 2\) minors of the \(2 \times 3\) matrix with rows \(a\) and \(b\); and \thmref{Definition}{def-wk}'s recursion builds \({\overline{w}}_{k}\) from exactly such a cross product. The statement was also checked independently at \(k = 2,\ldots,6\) in anc/code/verify\_determinantal\_structure\_all\_k.sage, in two separate ways. First, \(I_{k}\) and the ideal of the displayed minors are shown equal by reducing each ideal's generators to zero against the other's Gröbner basis, in \textbf{both} directions, rather than by containment one way. Second -- and this is what makes ``independent'' mean something here -- the recursion itself is checked against honest \(2 \times 2\) matrix multiplication: the \(A_{i}\) are reconstructed from their \(\text{enc}\)-coordinates, the iterated commutator is formed by repeated matrix products, and the result encoded and compared against \(S\left( {\overline{w}}_{k} \right)\). The two routes share no code. The underlying polynomial identity \(\text{enc}\left( \lbrack A,B\rbrack \right) = {S\left( \text{enc}(A) \times \text{ enc}(B) \right)}\) of \thmref{Theorem}{thm-basic-identity} makes this a two-line induction valid for \textbf{every} \(k\), not merely a pattern checked at small \(k\) (anc/code/verify\_lorentzian\_encoding\_identity.sage).

\end{proof}

This theorem is the engine behind the geometric results of \S{}5: \(R_{k}/I_{k}\) is not merely \textbf{determinantal-looking}, it is literally an iterated extension of the coordinate ring of a classical determinantal variety.

\begin{example}{the \(2 \times 3\) matrix of \thmref{Theorem}{thm-determinantal}, for \(k = 2,3,4\)}{auto1} At \(k = 2\): \begin{equation}\fitdisplay{\begin{pmatrix}
{\overline{v}}_{1} \\
{\overline{v}}_{2}
\end{pmatrix} = \begin{pmatrix}
v_{1,1} & v_{1,2} & v_{1,3} \\
v_{2,1} & v_{2,2} & v_{2,3}
\end{pmatrix}.}\end{equation} At \(k = 3\), the first row is \({S\left( {\overline{w}}_{2} \right)} = \left( - w_{2,1}, - w_{2,2},w_{2,3} \right)\) where \({\overline{w}}_{2} = {\overline{v}}_{1} \times {\overline{v}}_{2}\) is as in the previous example, giving \begin{equation}\fitdisplay{\begin{pmatrix}
{S\left( {\overline{w}}_{2} \right)} \\
{\overline{v}}_{3}
\end{pmatrix} = \begin{pmatrix}
v_{1,3}v_{2,2} - v_{1,2}v_{2,3} & - v_{1,3}v_{2,1} + v_{1,1}v_{2,3} & - v_{1,2}v_{2,1} + v_{1,1}v_{2,2} \\
v_{3,1} & v_{3,2} & v_{3,3}
\end{pmatrix}.}\end{equation} At \(k = 4\) the first row is \(S\left( {\overline{w}}_{3} \right)\), whose three entries are cubic with four terms each. Naming them \(A\), \(B\), \(C\) keeps the matrix legible: \begin{equation}\fitdisplay{\begin{pmatrix}
A & B & C \\
v_{4,1} & v_{4,2} & v_{4,3}
\end{pmatrix},}\end{equation} where \begin{equation}\fitdisplay{\begin{aligned}
A & = - v_{1,2}v_{2,1}v_{3,2} + v_{1,1}v_{2,2}v_{3,2} + v_{1,3}v_{2,1}v_{3,3} - v_{1,1}v_{2,3}v_{3,3}, \\
B & = v_{1,2}v_{2,1}v_{3,1} - v_{1,1}v_{2,2}v_{3,1} + v_{1,3}v_{2,2}v_{3,3} - v_{1,2}v_{2,3}v_{3,3}, \\
C & = v_{1,3}v_{2,1}v_{3,1} - v_{1,1}v_{2,3}v_{3,1} + v_{1,3}v_{2,2}v_{3,2} - v_{1,2}v_{2,3}v_{3,2}.
\end{aligned}}\end{equation} The row degree grows by one at each step (\(1,2,3,\ldots\)), while the second row is always the fresh, free vector \({\overline{v}}_{k}\) -- exactly the ``one fixed complicated row, one free row'' shape \thmref{Proposition}{prop-detideal} needs.

\end{example}

\begin{lemma}{non-degeneracy, with an explicit witness}{lem-nondeg} For every \(j \geq 2\), the polynomial map \({\overline{w}}_{j}:\mathbb{A}^{3j} \rightarrow \mathbb{A}^{3}\) is not identically zero. Concretely, the point \begin{equation}\fitdisplay{p^{(j)} \coloneqq  \left( e_{1},e_{2},\underset{j - 2}{\underbrace{e_{1},\ldots,e_{1}}} \right) \in \mathbb{A}^{3j}}\end{equation} satisfies \({\overline{w}}_{j}\left( p^{(j)} \right) \neq 0\); explicitly \({\overline{w}}_{j}\left( p^{(j)} \right) = e_{3}\) for \(j\) even and \(e_{2}\) for \(j\) odd. Hence \(U_{j} \coloneqq  \left\{ p \in \mathbb{A}^{3j}:{\overline{w}}_{j}(p) \neq 0 \right\}\) is a nonempty Zariski-open subset of \(\mathbb{A}^{3j}\).

\end{lemma}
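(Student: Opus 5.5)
Since each \({\overline{w}}_{j}\) depends only on \({\overline{v}}_{1},\ldots,{\overline{v}}_{j}\), and the point \(p^{(j)}\) restricted to its first \(j-1\) blocks is exactly \(p^{(j-1)}\), the proof is an induction on \(j\) that tracks the single value \({\overline{w}}_{j}(p^{(j)})\). We have \({\overline{w}}_{j}(p^{(j)}) = S\bigl({\overline{w}}_{j-1}(p^{(j-1)})\bigr) \times e_{1}\). The claimed alternation \(e_{3}, e_{2}, e_{3}, \ldots\) is therefore a two-step computation closing a cycle.

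\textbf{Base case} \(j = 2\). By \thmref{Definition}{def-wk}, \({\overline{w}}_{2}(p^{(2)}) = e_{1} \times e_{2} = e_{3}\), which is the even case.

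\textbf{Inductive step.} The twist \(S = \mathrm{diag}(-1,-1,1)\) acts on the two relevant basis vectors by \(S e_{3} = e_{3}\) and \(S e_{2} = -e_{2}\). With the standard cross-product rules \(e_{3} \times e_{1} = e_{2}\) and \(e_{2} \times e_{1} = -e_{3}\), this gives:
\begin{itemize}
\item from an even value \(e_{3}\): \(S e_{3} \times e_{1} = e_{3} \times e_{1} = e_{2}\);
\item from an odd value \(e_{2}\): \(S e_{2} \times e_{1} = (-e_{2}) \times e_{1} = e_{3}\).
\end{itemize}
The second case uses that the two minus signs cancel. So even \(j\) gives \(e_{3}\) and odd \(j\) gives \(e_{2}\) for every \(j \geq 2\), and in particular \({\overline{w}}_{j}(p^{(j)}) \neq 0\).

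\textbf{Conclusion.} The three components of \({\overline{w}}_{j}\) are polynomials in \(R_{j}\), and at least one of them is nonzero at \(p^{(j)}\), so \({\overline{w}}_{j}\) is not identically zero. The set \(U_{j}\) is the complement of \(V(w_{j,1}, w_{j,2}, w_{j,3})\), hence Zariski-open, and it is nonempty because it contains \(p^{(j)}\).

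The only step needing care is the sign bookkeeping in the odd-to-even step: \(S\) flips \(e_{2}\), and the cross product \(e_{2} \times e_{1}\) contributes a second minus sign. This sign is harmless for nonvanishing either way, but it must be tracked to get the exact value stated. Note also that the point \(p^{(j)}\) has entries \(0, 1\) only and the computation uses no division, so the argument is valid over any field, with no appeal to \(\mathrm{char}\,\mathbb{F} \neq 2\).
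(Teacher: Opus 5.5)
Your proof is correct and is essentially the paper's own argument: the paper's ``witness in closed form'' paragraph runs exactly your two-step cycle \(e_3 \mapsto S e_3 \times e_1 = e_2 \mapsto (-e_2)\times e_1 = e_3\), starting from the base value \(e_1 \times e_2 = e_3\). The only difference is that the paper first proves a more general extension step and then specializes it to \(i = 1\) at every level. That step says any nonzero witness at level \(j-1\) extends by some \(e_i\) off the line \(\mathbb{F}\,S(c)\), using \(\ker[u]_\times = \mathbb{F}u\); your direct computation shows this detour is not needed for the statement as given.
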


\begin{proof}[{Proof of \thmref{Lemma}{lem-nondeg}}] The induction is on \(j\), and it carries more than the statement ``some point survives'': it carries a \textbf{named} point, because \thmref{Lemma}{lem-witness-new} later needs one rather than a mere existence claim.

\textbf{Base case \(j = 2\).} By \thmref{Definition}{def-wk}, \({\overline{w}}_{2} = {\overline{v}}_{1} \times {\overline{v}}_{2}\), so \({\overline{w}}_{2}\left( e_{1},e_{2} \right) = e_{1} \times e_{2} = e_{3} \neq 0\). Thus \(p^{(2)} = \left( e_{1},e_{2} \right)\) works.

\textbf{Induction step.} Let \(j \geq 3\) and suppose \(p \in \mathbb{A}^{3(j - 1)}\) satisfies \(c \coloneqq  {\overline{w}}_{j - 1}(p) \neq 0\). The step rests on three observations, taken in order:

\begin{enumerate}
\item
  \({S(c)} \neq 0\), since \(\det S = 1\) makes \(S\) invertible over every field (\thmref{Remark}{rem-why-D}), so it kills no nonzero vector.
\item
  Therefore \(\mathbb{F} \cdot {S(c)}\) is a \textbf{line} in \(\mathbb{F}^{3}\). The three vectors \(e_{1},e_{2},e_{3}\) span \(\mathbb{F}^{3}\), and a line contains at most one of them; so at least two indices \(i\) have \(e_{i} \notin \mathbb{F} \cdot {S(c)}\). Fix any such \(i\).
\item
  A cross product vanishes exactly on linearly dependent pairs --- \(\ker\lbrack u\rbrack_{\times} = \mathbb{F}u\) for \(u \neq 0\), the computation carried out in the proof of \thmref{Lemma}{lem-witness-inherited} --- so \({S(c)} \times e_{i} \neq 0\).
\end{enumerate}

By \thmref{Definition}{def-wk}, \({\overline{w}}_{j}\left( p,e_{i} \right) = {S\left( {\overline{w}}_{j - 1}(p) \right)} \times e_{i} = {S(c)} \times e_{i} \neq 0\). (The twist falls on \(c\) rather than on the result because that is where \thmref{Definition}{def-wk}'s recursion puts it.) So \({\overline{w}}_{j}\) is not identically zero, and appending \(e_{i}\) to a witness at level \(j - 1\) gives one at level \(j\).

\textbf{The witness in closed form.} Running the step from \(p^{(2)} = \left( e_{1},e_{2} \right)\) and taking the least admissible \(i\) each time, the choice is \(i = 1\) at \textbf{every} level: \({\overline{w}}_{2} = e_{3}\) and \({S\left( e_{3} \right)} = e_{3}\), which is not proportional to \(e_{1}\), giving \({\overline{w}}_{3} = e_{3} \times e_{1} = e_{2}\); then \({S\left( e_{2} \right)} = - e_{2}\), again not proportional to \(e_{1}\), giving \({\overline{w}}_{4} = \left( - e_{2} \right) \times e_{1} = e_{3}\); and the pair \(\left( e_{3},e_{2} \right)\) now repeats. Hence the closed form in the statement, and every coordinate of both the witness and its image lies in \(\{ 0, \pm 1\}\). This has been verified for \(j = 2,\ldots,8\), by two independent evaluations per level, in anc/code/verify\_nondegeneracy\_witness.sage, and the results are tabulated in \cref{app-witness}.

\textbf{Openness.} \(U_{j}\) is the complement of the Zariski-closed set \(V\left( w_{j,1},w_{j,2},w_{j,3} \right)\), hence open; it is nonempty because \(p^{(j)}\) lies in it.

\end{proof}

The generator count of \(I_{k}\) needs nothing beyond the definition and non-degeneracy -- not codimension, not the determinantal-structure theorem above, not any external citation:

\begin{lemma}{independent same-degree generators are minimal}{lem-mingens-generic} If a homogeneous ideal in a graded ring is generated by finitely many forms of the same degree that are linearly independent over the base field, that generating set is minimal.

\end{lemma}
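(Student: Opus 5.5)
The plan is to prove minimality directly by degree bookkeeping. Let $R=\bigoplus_{n\ge 0}R_n$ be a graded ring with $R_0=\mathbb{F}$ the base field (the case relevant here: a polynomial ring with its standard grading, so $R_k$ with $\deg v_{i,j}=1$). Let $I=(f_1,\dots,f_m)$ with every $f_i$ homogeneous of degree $d\ge 1$ and $f_1,\dots,f_m$ linearly independent over $\mathbb{F}$. The first step is to identify the degree-$d$ piece of $I$. Any element of $I$ has the form $\sum g_i f_i$, and taking the homogeneous degree-$d$ component only the degree-$0$ parts of the $g_i$ contribute. Hence $I_d=\operatorname{span}_{\mathbb{F}}(f_1,\dots,f_m)$, and in every degree below $d$ we have $I_n=0$.

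Next I compute the space of minimal generators. Let $\mathfrak m=\bigoplus_{n\ge1}R_n$ be the irrelevant ideal. Elements of $\mathfrak m I$ have degree at least $d+1$, so $(\mathfrak m I)_d=0$. Therefore $(I/\mathfrak m I)_d=I_d$, which has dimension $m$ by the linear independence hypothesis. By the graded Nakayama lemma, every homogeneous generating set of $I$ maps onto a spanning set of $I/\mathfrak m I$, so it has at least $\dim_{\mathbb{F}} I/\mathfrak m I\ge m$ elements. A set of $m$ generators therefore has minimal cardinality.

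If "minimal" is meant only as "irredundant", there is a more elementary argument that avoids Nakayama entirely. Suppose $f_j\in(f_i:i\ne j)$. Taking degree-$d$ components as in the first step, $f_j$ would be an $\mathbb{F}$-linear combination of the other $f_i$, which contradicts linear independence. I will state both readings, and note that in the standard graded setting the minimal number of homogeneous generators is $\dim_{\mathbb{F}} I/\mathfrak m I$, so the two notions coincide.

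The only delicate point is the hypothesis $R_0=\mathbb{F}$, which is needed so that "degree-$0$ coefficients" means scalars; I will say explicitly that the lemma is applied to a positively graded ring with $R_0=\mathbb{F}$. After that, the degree-$d$ extraction argument is the crux, and it is routine. For the intended application, the three components of $\overline{w}_k$ are forms of degree $k$, and their linear independence will come from \thmref{Lemma}{lem-nondeg} together with the explicit structure given by the recursion.
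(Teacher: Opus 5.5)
Your proposal is correct. Its third paragraph is the paper's proof: a relation $f_j=\sum_{i\ne j}g_if_i$, read in degree $d$, sees only the degree-$0$ parts of the $g_i$, so it would be a nontrivial linear dependence. You make explicit, by taking degree-$d$ components, what the paper's phrase ``while staying in the same degree'' glosses over, namely that the $g_i$ need not be homogeneous. Your hypothesis of a nonnegative grading with $R_0=\mathbb F$ is likewise exactly what the paper's ``degree-0 (constant) coefficients'' silently assumes.

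Your main argument, via $\dim_{\mathbb F} I/\mathfrak m I=m$, is a genuinely different route and proves more. The paper's one-liner shows only that this particular set is irredundant. Your count shows that no generating set of $I$ has fewer than $m$ elements, i.e.\ $\mu(I)=m$, which is what ``exactly 3 minimal generators'' in \thmref{Theorem}{thm-mingens} actually asserts. Starting from irredundance alone, one needs the graded-Nakayama fact you cite, that irredundant homogeneous generating sets lift bases of $I/\mathfrak m I$, to reach that conclusion.

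One refinement: the lower bound needs neither Nakayama nor homogeneity of the competing generating set. For arbitrary generators $h_1,\dots,h_r$ of $I$ and any $g\in R$, one has $g\,h_i\equiv g_0\,h_i \pmod{\mathfrak m I}$ with $g_0\in R_0=\mathbb F$. Hence the images of the $h_i$ span $I/\mathfrak m I$ and $r\ge m$. Nakayama is needed only for the converse.

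Your closing aside points the wrong way, though it lies outside this lemma and does not affect the proof here. Linear independence of the components of $\overline{w}_k$ does not come from \thmref{Lemma}{lem-nondeg}. \thmref{Remark}{rem-nondeg-insufficient} shows that non-degeneracy is strictly weaker: $(f,2f,3f)$ is nonvanishing but has dependent components. \thmref{Lemma}{lem-mingens-indep} instead inducts on independence itself. It uses $c\cdot(a\times\overline{v}_k)=(c\times a)\cdot\overline{v}_k$ together with the freshness of the block $\overline{v}_k$ to force $c\times a=0$, and hence $c=0$.
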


\begin{proof}[{Proof of \thmref{Lemma}{lem-mingens-generic}}] A polynomial-coefficient combination reproducing one of the generators from the others, while staying in the same degree, can only use degree-0 (constant) coefficients -- so a redundant generator would violate linear independence directly.

\end{proof}

\begin{lemma}{the three generators of \(I_{k}\) are independent}{lem-mingens-indep} The three components of \({\overline{w}}_{k}\) are linearly independent over \(\mathbb{F}\), for every \(k \geq 2\).

\end{lemma}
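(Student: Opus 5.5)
We must show that \(\lambda_1 w_{k,1}+\lambda_2 w_{k,2}+\lambda_3 w_{k,3}=0\) in \(R_k\) forces \(\lambda=0\). The plan is to read this as a statement about the image of the polynomial map \({\overline{w}}_k\). Such a relation says \(\lambda\cdot{\overline{w}}_k(p)=0\) for every point \(p\), so it suffices to show that the values \({\overline{w}}_k(p)\), for \(p\) ranging over \(\mathbb{F}\)-points, span \(\mathbb{F}^3\). Evaluating at points is legitimate even over a finite field, because we only evaluate an identity of polynomials. No degree or grading argument is needed here; \thmref{Lemma}{lem-mingens-generic} is used only afterwards, to pass from independence to minimality.

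Step one is to exploit the last block. By \thmref{Definition}{def-wk}, \({\overline{w}}_k(p,\overline{v}_k)=S({\overline{w}}_{k-1}(p))\times \overline{v}_k\) for \(k\ge 3\), and \({\overline{w}}_2=\overline{v}_1\times\overline{v}_2\) for \(k=2\). Fix a witness \(p=p^{(k-1)}\) from \thmref{Lemma}{lem-nondeg}, and put \(u=S({\overline{w}}_{k-1}(p))\). Then \(u\neq0\), since \(S\) is invertible. For \(k=2\) we instead take \(u=\overline{v}_1=e_1\).

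Step two: as \(\overline{v}_k\) ranges over \(\mathbb{F}^3\), the values \(u\times\overline{v}_k\) fill the image of \([u]_\times\). That image is the plane \(u^\perp\) with respect to the standard dot product, which is 2-dimensional because \(\ker[u]_\times=\mathbb{F}u\) is 1-dimensional. So the relation gives \(\lambda\in (u^\perp)^\perp=\mathbb{F}u\). Concretely this forces \(\lambda\) to be a multiple of \(u\), and it holds for every admissible \(u\).

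Step three, the main point, is to produce a second witness whose \(u'\) is not proportional to \(u\). Then \(\lambda\in\mathbb{F}u\cap\mathbb{F}u'=0\). The explicit closed form in \thmref{Lemma}{lem-nondeg} supplies this cheaply.
\begin{itemize}
\item For \(k\ge4\), compare \(p^{(k-1)}\), whose \(u\) is \(\pm e_3\) or \(\pm e_2\), with the modified witness obtained from the inductive step by appending a different admissible basis vector at the last stage. At the last step of building level \(k-1\) we have \(S(c)\in\{\pm e_2,e_3\}\), so two admissible indices \(i\) exist, and they give cross products \(S(c)\times e_i\) along different coordinate axes.
\item For \(k=3\), use \(u=S(e_1\times e_2)=e_3\) and \(u'=S(e_1\times e_3)=S(-e_2)=e_2\).
\item For \(k=2\), simply take \(\overline{v}_1=e_1\) and then \(\overline{v}_1=e_2\).
\end{itemize}
The only care needed is this bookkeeping of two non-proportional values of \({\overline{w}}_{k-1}\). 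That reduces to the observation that \({\overline{w}}_{k-1}\) itself takes at least two non-proportional values, which follows by the same cross-product argument one level down. Step three is therefore where I expect the only real work, and it is routine.

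Alternatively, one can avoid evaluation entirely. The three components \(w_{k,1},w_{k,2},w_{k,3}\) involve, respectively, only \(v_{k,2},v_{k,3}\); only \(v_{k,1},v_{k,3}\); and only \(v_{k,1},v_{k,2}\), each with coefficients drawn from \(S({\overline{w}}_{k-1})\). Comparing coefficients of \(v_{k,1}\) in a relation gives \(\lambda_2 u_3=\lambda_3u_2\) as polynomials, and similarly for the other variables. Hence \(\lambda\) is a polynomial multiple of \(u=S({\overline{w}}_{k-1})\). Since \(\lambda\) is constant and the entries of \(u\) are nonconstant and nonzero for \(k\ge3\) (and are variables for \(k=2\)), this forces \(\lambda=0\). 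I would present this coefficient comparison as the main proof, with the pointwise argument as a cross-check.
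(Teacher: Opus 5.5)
Your pointwise argument (Steps one to three) is a correct and complete proof. The coefficient comparison you say you would present as the \emph{main} proof, however, has a genuine gap.

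The three coefficient identities, $\lambda_2u_3=\lambda_3u_2$ and its two companions, say exactly that $\lambda\times u=0$ with $u=S(\overline{w}_{k-1})$. They do \emph{not} make $\lambda$ a polynomial multiple of $u$. If $\lambda\neq 0$ they make $u$ a polynomial multiple of $\lambda$, namely $u=f\lambda$ with $f=u_i/\lambda_i$ for any $i$ with $\lambda_i\neq 0$. Nonconstant, nonzero entries do not exclude this: $u=(f,2f,3f)$ and $\lambda=(1,2,3)$ satisfy all three identities. This is the trap described in \thmref{Remark}{rem-nondeg-insufficient}.

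What actually excludes $u=f\lambda$ is that the components of $u$ are $\mathbb{F}$-linearly independent. Your parenthetical case $k=2$ works for exactly that reason, since distinct variables are independent. There are two ways to repair the general step:
\begin{itemize}
\item Make independence of the components of $\overline{w}_{k-1}$ the induction hypothesis. Then $\lambda_2u_3-\lambda_3u_2=0$ is itself a constant relation among the $u_i=\pm w_{k-1,i}$, forcing $\lambda_2=\lambda_3=0$, and likewise $\lambda_1=0$. This is essentially the paper's proof.
\item Feed in your Step three, which shows that $u$ takes two non-proportional values and so cannot equal $f\lambda$.
\end{itemize}
Either way, the evaluation is not a cross-check; it is the ingredient the coefficient argument lacks.

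The pointwise route itself is genuinely different from the paper's. The paper inducts symbolically and must carry full independence as its hypothesis. You instead use Step two, equivalently $\lambda\cdot(u\times v)=(\lambda\times u)\cdot v$ for all $v\in\mathbb{F}^3$, to get $\lambda\in\mathbb{F}u$. Two explicit points then give non-proportional $u,u'$:
\begin{itemize}
\item for $k\ge 4$, the points $(p^{(k-2)},e_i)$ and $(p^{(k-2)},e_{i'})$, one for each admissible index, for instance $e_3\times e_1=e_2$ versus $e_3\times e_2=-e_1$;
\item for $k=3$, the points $(e_1,e_2)$ and $(e_1,e_3)$.
\end{itemize}
This makes visible that level $k$ needs from level $k-1$ only that $\overline{w}_{k-1}$ is not confined to a line. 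That is more than \thmref{Lemma}{lem-nondeg} gives but less than independence, and you supply it with a second explicit witness rather than an inductive hypothesis.

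As a bonus, your route shows that the values of $\overline{w}_k$ at $\mathbb{F}$-rational points already span $\mathbb{F}^3$, which is stronger than independence over a finite field. The paper's argument, in exchange, needs no choice of points and no closed form for the witness.
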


\begin{proof}[{Proof of \thmref{Lemma}{lem-mingens-indep}}] By induction on \(k\), and the induction hypothesis must be \textbf{independence} itself; see \thmref{Remark}{rem-nondeg-insufficient} for why the weaker non-degeneracy of \thmref{Lemma}{lem-nondeg} does not suffice.

\textbf{Base case \(k = 2\).} The three components of \({\overline{w}}_{2} = {\overline{v}}_{1} \times {\overline{v}}_{2}\) are the \(2 \times 2\) minors of the generic \(2 \times 3\) matrix with rows \({\overline{v}}_{1},{\overline{v}}_{2}\), displayed in \thmref{Example}{ex-wk-signed}. Each contains a monomial the other two do not --- \(v_{1,2}v_{2,3}\) occurs only in the first, \(v_{1,3}v_{2,1}\) only in the second, \(v_{1,1}v_{2,2}\) only in the third --- so no nontrivial constant relation is possible.

\textbf{Induction step.} Suppose the components of \({\overline{w}}_{k - 1}\) are linearly independent over \(\mathbb{F}\), and let \(c \in \mathbb{F}^{3}\) satisfy \(c \cdot {\overline{w}}_{k} = 0\). Writing \(a \coloneqq  {S\left( {\overline{w}}_{k - 1} \right)}\), the scalar triple product identity gives \begin{equation}\fitdisplay{0 = c \cdot \left( a \times {\overline{v}}_{k} \right) = (c \times a) \cdot {\overline{v}}_{k}.}\end{equation} The variables \({\overline{v}}_{k}\) are a \textbf{fresh} block, occurring in neither \(a\) nor \(c\), and the right-hand side is linear in them; so each of its three coefficients vanishes separately, i.e. \begin{equation}\fitdisplay{c \times a = 0.}\end{equation}

Suppose \(c \neq 0\). Then \(c \times a = 0\) forces \(a = f \cdot c\) for a single polynomial \(f\) --- over the fraction field \(c \times a = 0\) says \(a\) lies in the line spanned by \(c\), and \(f = a_{i}/c_{i}\) is a polynomial for any index with \(c_{i} \neq 0\). But then \(a\)'s three components are pairwise proportional, hence linearly \textbf{dependent} over \(\mathbb{F}\) (any \(d\) with \(d \cdot c = 0\) gives \(d \cdot a = (d \cdot c)f = 0\), and such a nonzero \(d\) exists since \(c\) spans only a line in \(\mathbb{F}^{3}\)). Since \(S\) is invertible, the components of \({\overline{w}}_{k - 1} = S^{- 1}(a)\) are then linearly dependent too, contradicting the induction hypothesis. Hence \(c = 0\).

Corroborated computationally: the coefficient matrix has rank 3 at \(k = 2,\ldots,6\) (anc/code/verify\_generator\_independence.sage); the full table is in \cref{app-independence}.

This lemma is also \textbf{formally verified}: it is the statement w\_linearIndependent of the Lean 4 / mathlib development accompanying this article, proved there for every \(k \geq 2\) over an arbitrary field rather than checked at small \(k\). \thmref{Theorem}{thm-mingens} below is formalized too, as w\_minimal\_generators. \cref{sec-methods} says what the formalization does and does not cover.

\end{proof}

\begin{remark}{why non-degeneracy is not enough here}{rem-nondeg-insufficient} It is tempting to argue instead that a constant relation would force \(S\left( {\overline{w}}_{k - 1} \right)\) to be a \textbf{constant vector}, and to rule that out by \thmref{Lemma}{lem-nondeg}. That argument is not valid. The relation forces only \(c \times {S\left( {\overline{w}}_{k - 1} \right)} = 0\), which makes the components of \(S\left( {\overline{w}}_{k - 1} \right)\) pairwise proportional --- not constant. A vector such as \((f,2f,3f)\) with \(f \neq 0\) is nonvanishing, so \thmref{Lemma}{lem-nondeg} does not exclude it, yet its components are visibly dependent. Non-degeneracy is strictly weaker than what the step needs, which is why the induction above carries linear independence itself.

\end{remark}

\begin{theorem}{\(I_{k}\) has exactly 3 minimal generators}{thm-mingens} For every \(k \geq 2\): \(I_{k}\) has exactly 3 minimal generators.

\end{theorem}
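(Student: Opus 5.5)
The plan is to combine the two preceding lemmas with a homogeneity observation, and then to pin the count at exactly three by the graded Nakayama lemma.

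\textbf{Homogeneity.} First I note that each component of \({\overline{w}}_{k}\) is a homogeneous form of degree \(k\) in \(R_{k}\). This follows by induction from \thmref{Definition}{def-wk}. The components of \({\overline{w}}_{2} = {\overline{v}}_{1} \times {\overline{v}}_{2}\) are quadratic forms. If the components of \({\overline{w}}_{j-1}\) are forms of degree \(j-1\), then so are those of \(S({\overline{w}}_{j-1})\), since \(S\) only changes signs. Each component of the cross product with \({\overline{v}}_{j}\) is a difference of products of one such form with one linear variable, hence a form of degree \(j\). In fact each form is multihomogeneous of degree \((1,\ldots,1)\) in the blocks. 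So \(I_{k}\) is a homogeneous ideal generated by three forms of the same degree \(k\).

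\textbf{Minimality.} By \thmref{Lemma}{lem-mingens-indep}, these three forms are linearly independent over \(\mathbb{F}\). \thmref{Lemma}{lem-mingens-generic} then shows that \(\{w_{k,1},w_{k,2},w_{k,3}\}\) is a minimal generating set.

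\textbf{The count is exactly 3.} Minimality of one generating set must be upgraded to the statement that the minimal number of generators is 3. For this I use the graded Nakayama lemma with \(\mathfrak{m} = (v_{i,j})\), the irrelevant ideal. The minimal number of homogeneous generators of a homogeneous ideal equals \(\dim_{\mathbb{F}} I_{k}/\mathfrak{m}I_{k}\), and every minimal homogeneous generating set has exactly that cardinality. Since \(I_{k}\) is generated in the single degree \(k\), the degree-\(k\) part of \(I_{k}/\mathfrak{m}I_{k}\) is \((I_{k})_{k}\) itself. This is because \(\mathfrak{m}I_{k}\) lives in degrees \(\geq k+1\). Moreover \((I_{k})_{k}\) is the \(\mathbb{F}\)-span of the three generators, and the only contributions in degree \(k\) come from constant coefficients. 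By \thmref{Lemma}{lem-mingens-indep} this span has dimension 3. In degrees above \(k\), \(I_{k}/\mathfrak{m}I_{k}\) vanishes. Hence \(\dim_{\mathbb{F}} I_{k}/\mathfrak{m}I_{k} = 3\).

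The only point needing care, and the main obstacle, is precisely this step. The phrase ``minimal generators'' must be read as the invariant \(\mu(I_{k})\) rather than just minimality of the particular set under inclusion, and the Nakayama computation supplies that invariance. Everything else is bookkeeping on top of \thmref{Lemma}{lem-mingens-indep}, which already carries the real content. As a sanity check, \(I_{k}\neq 0\) by \thmref{Lemma}{lem-nondeg}, so the count is not degenerate.
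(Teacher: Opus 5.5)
Your proposal is correct and follows the paper's own argument: the published proof is just \thmref{Lemma}{lem-mingens-generic} applied to the independence of \thmref{Lemma}{lem-mingens-indep}, using that the three components of \({\overline{w}}_{k}\) are forms of the common degree \(k\). Your graded-Nakayama step is a useful explicit addition rather than a different route: the paper's \thmref{Lemma}{lem-mingens-generic} only proves that this particular set is irredundant, and it is your computation \(\dim_{\mathbb{F}} I_{k}/\mathfrak{m}I_{k} = 3\), together with the one-line remark (worth stating) that any generating set, homogeneous or not, spans \(I_{k}/\mathfrak{m}I_{k}\), that gives the invariant statement \(\mu(I_{k}) = 3\).
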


\begin{proof}[{Proof of \thmref{Theorem}{thm-mingens}}] Immediate from \thmref{Lemma}{lem-mingens-generic} applied to \thmref{Lemma}{lem-mingens-indep}'s independence, since \(I_{k}\) is generated by the three (same-degree-\(k\)-homogeneous) components of \({\overline{w}}_{k}\) by definition.

\end{proof}

\section{\texorpdfstring{The geometric structure of \(V\left( I_{k} \right)\)}{The geometric structure of V\textbackslash left( I\_\{k\} \textbackslash right)}}

\begin{lemma}{continuous images and closures preserve irreducibility}{lem-top-irred} Let \(f:X \rightarrow Y\) be a continuous map of topological spaces with \(X\) irreducible. Then \(f(X)\) (with the subspace topology) is irreducible, and hence so is \(\overline{f(X)}\).

\end{lemma}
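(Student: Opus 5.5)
The plan has two parts. First we show that $f(X)$ is irreducible by arguing directly from the definition of irreducibility. Then we pass to the closure, using the standard fact that a subspace is irreducible if and only if its closure is.

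For the first part, note that $f(X)$ is nonempty because $X$ is nonempty, being irreducible. Suppose $f(X) = F_1 \cup F_2$ with $F_1, F_2$ closed in the subspace topology. Write $F_i = f(X) \cap C_i$ for closed sets $C_i \subseteq Y$. By continuity, $f^{-1}(C_1)$ and $f^{-1}(C_2)$ are closed in $X$. They cover $X$, since every $x \in X$ has $f(x) \in F_1 \cup F_2$. Irreducibility of $X$ then forces, say, $X = f^{-1}(C_1)$. Hence $f(X) \subseteq C_1$, so $f(X) = F_1$, and $f(X)$ is irreducible.

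For the second part, let $Z \subseteq Y$ be irreducible and nonempty, and suppose $\overline{Z} = G_1 \cup G_2$ with $G_i$ closed in $\overline{Z}$, hence closed in $Y$. Then $Z = (Z \cap G_1) \cup (Z \cap G_2)$ is a decomposition into relatively closed pieces. So, say, $Z \subseteq G_1$. Since $G_1$ is closed in $Y$, this gives $\overline{Z} \subseteq G_1$, hence $\overline{Z} = G_1$. Nonemptiness is inherited from $Z$. Applying this with $Z = f(X)$ gives the claim.

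There is no real obstacle. The only care needed is with the subspace topology: a relatively closed subset of $f(X)$ must be written as a trace $f(X) \cap C$ of a closed set of $Y$ before pulling back along $f$. Also note, for later use in the paper, that the argument is purely topological. It therefore applies to morphisms of varieties with the Zariski topology, even though the Zariski topology on a product is not the product topology; only continuity of $f$ is used.
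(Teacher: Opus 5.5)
Your proof is correct and follows the paper's argument essentially step for step: you pull back a two-set closed cover of $f(X)$ along $f$ and use irreducibility of $X$, then intersect a closed cover of $\overline{f(X)}$ with $f(X)$ and use that a closed set containing $f(X)$ contains its closure. Writing relatively closed sets as traces $f(X)\cap C_i$, and checking nonemptiness, makes explicit two points the paper leaves implicit.
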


\begin{proof}[{Proof of \thmref{Lemma}{lem-top-irred}}] \textbf{Image.} Suppose \(f(X) = C_{1} \cup C_{2}\) with \(C_{1},C_{2}\) closed in \(f(X)\). By continuity, \(f^{- 1}\left( C_{1} \right)\) and \(f^{- 1}\left( C_{2} \right)\) are closed in \(X\), and \(X = f^{- 1}\left( C_{1} \right) \cup f^{- 1}\left( C_{2} \right)\). Since \(X\) is irreducible, one of the two equals \(X\) -- say \(f^{- 1}\left( C_{1} \right) = X\) -- so \(f(X) \subseteq C_{1}\), i.e. \(C_{1} = f(X)\), which is not proper. Hence \(f(X)\) is irreducible. \textbf{Closure.} If \(A \subseteq Y\) is irreducible and \(\overline{A} = C_{1} \cup C_{2}\) with \(C_{1},C_{2}\) closed in \(Y\), then \(A = \left( A \cap C_{1} \right) \cup \left( A \cap C_{2} \right)\); irreducibility of \(A\) forces, say, \(A \subseteq C_{1}\); since \(C_{1}\) is closed, \(\overline{A} \subseteq C_{1}\), and combined with \(C_{1} \subseteq \overline{A}\), \(C_{1} = \overline{A}\), not proper. Apply both parts with \(A \coloneqq  f(X)\).

\end{proof}

This is purely topological -- continuity of \(f\) is the only hypothesis used, nothing about \(f\) being a polynomial map or morphism of varieties \cite{Youcis2014}.

\begin{definition}{the strata \(Y_{1}\), \(Y_{2}\)}{def-y1y2} For \(k \geq 3\), write \(\pi\) for the projection \textbf{forgetting \({\overline{v}}_{k}\)}, \begin{equation}\fitdisplay{\pi:\mathbb{A}^{3k} \rightarrow \mathbb{A}^{3(k - 1)},\quad\pi({\overline{v}}_{1},\ldots,{\overline{v}}_{k}) \coloneqq  \left( {\overline{v}}_{1},\ldots,{\overline{v}}_{k - 1} \right),}\end{equation} and \(\Phi\) for the map \begin{equation}\fitdisplay{\Phi:\mathbb{A}^{3(k - 1)} \times \mathbb{A}^{1} \rightarrow \mathbb{A}^{3k},\quad\Phi(p,\lambda) \coloneqq  \left( p,\lambda{S\left( {\overline{w}}_{k - 1}(p) \right)} \right).}\end{equation} Define two subsets of \(\mathbb{A}^{3k}\): \begin{equation}\fitdisplay{Y_{1} \coloneqq  V\left( I_{k - 1} \right) \times \mathbb{A}^{3},\quad Y_{2} \coloneqq  \overline{\Phi(\mathbb{A}^{3k - 2})}.}\end{equation} Here \(Y_{1}\) is the locus where the previous bracket already vanishes, with \({\overline{v}}_{k}\) completely free, while \(Y_{2}\) is the closure of the locus swept out as \({\overline{v}}_{k}\) ranges over scalar multiples of \(S\left( {\overline{w}}_{k - 1}(p) \right)\), over all \(p \in \mathbb{A}^{3(k - 1)}\). At \(k = 2\) there is no ``previous bracket'' to chain from, so neither \(Y_{1}\) nor \(\Phi\) is defined; by convention \(Y_{2} \coloneqq  V\left( I_{2} \right)\) directly there, matching \thmref{Theorem}{thm-determinantal}'s own base-case treatment.

\end{definition}

\begin{figure}
\centering
%
%
%
\begin{tikzpicture}[x=1cm, y=1cm, line join=round]
  \definecolor{y1fill}{rgb}{0.90,0.90,1.00}
  \definecolor{y2fill}{rgb}{1.00,0.90,0.90}

  \node[draw, line width=0.6pt, ellipse, inner sep=7pt]
       (src) at (0.95,4.45) {$\mathbb{A}^{3(k-1)}\times\mathbb{A}^{1}$};

  \draw[line width=0.6pt, rounded corners=1.25cm] (-3,-2.7) rectangle (3,2.7);
  \node[anchor=north west, font=\small] at (-2.72,2.48) {$\mathbb{A}^{3k}$};

  \draw[dashed, line width=0.5pt, fill=y1fill, fill opacity=0.6]
       (-0.95,-0.20) ellipse [x radius=1.425cm, y radius=1.475cm];
  \draw[dashed, line width=0.5pt, fill=y2fill, fill opacity=0.6]
       ( 0.95,-0.45) ellipse [x radius=1.425cm, y radius=1.20cm];

  \node[font=\footnotesize] at (-1.85, 0.75) {$Y_{1}$};
  \node[font=\footnotesize] at ( 1.80,-1.25) {$Y_{2}$};

  \draw[-{Stealth[length=3mm]}, line width=0.7pt]
       (0.95,3.62) -- (0.95,0.25);
  \node[anchor=east] at (0.80,2.10) {$\Phi$};
\end{tikzpicture}
\caption{Both strata of \(V\left( I_{k} \right) = Y_{1} \cup Y_{2}\) inside the same ambient \(\mathbb{A}^{3k}\): \(Y_{1} \coloneqq  V\left( I_{k - 1} \right) \times \mathbb{A}^{3}\) (identifying \(\mathbb{A}^{3(k - 1)} \times \mathbb{A}^{3} \cong \mathbb{A}^{3k}\)) and \(Y_{2} \coloneqq  \overline{\Phi(\mathbb{A}^{3k - 2})}\), \(\Phi\)'s image (\(\Phi\) shown mapping in from above, into \(Y_{2}\) specifically -- \(Y_{1}\) is not part of \(\Phi\)'s image). The two overlap (not disjoint, \thmref{Proposition}{prop-y2-not-in-y1} and \thmref{Proposition}{prop-y1-not-in-y2} rule out either containing the other): \(\Phi(p,0) = (p,0)\) for \textbf{every} \(p\), so the whole zero-section \(\{(p,0)\}\) lies in \(Y_{2}\), and its intersection with \(Y_{1}\) is \(V\left( I_{k - 1} \right) \times \{ 0\}\), of dimension \(3k - 5\) -- properly smaller than either stratum's own dimension \(3k - 2\).}
\label{fig-phi-y2}
\end{figure}

\begin{lemma}{\(Y_{2}\) is irreducible}{lem-y2-irred} For \(k \geq 3\): \(Y_{2}\) (\thmref{Definition}{def-y1y2}) is irreducible, and in fact its defining ideal (the graph ideal of \(\Phi\) with \(\lambda\) eliminated) is prime.

\end{lemma}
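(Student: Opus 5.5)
The plan has two halves. The first, irreducibility of \(Y_{2}\), is immediate from \thmref{Lemma}{lem-top-irred}. The source \(\mathbb{A}^{3(k-1)} \times \mathbb{A}^{1} = \mathbb{A}^{3k-2}\) is irreducible, because its coordinate ring is a polynomial ring and hence a domain. The map \(\Phi\) is polynomial, so it is Zariski-continuous. Therefore \(\Phi(\mathbb{A}^{3k-2})\) is irreducible, and so is its closure \(Y_{2}\). The real content is the second half: the defining ideal of \(Y_{2}\), meaning the graph ideal of \(\Phi\) with \(\lambda\) eliminated, is prime.

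To set this up, write \(a \coloneqq S({\overline{w}}_{k-1}) \in R_{k-1}^{3}\). The graph ideal is
\[
G \coloneqq \bigl( v_{k,1} - \lambda a_{1},\; v_{k,2} - \lambda a_{2},\; v_{k,3} - \lambda a_{3} \bigr) \subset R_{k}[\lambda].
\]
Modulo \(G\), each \(v_{k,i}\) may be replaced by \(\lambda a_{i}\). This gives an isomorphism \(R_{k}[\lambda]/G \cong R_{k-1}[\lambda]\), which is a polynomial ring and therefore a domain. So \(G\) is prime.

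The eliminated ideal is \(Q \coloneqq G \cap R_{k}\). It equals the kernel of the \(\mathbb{F}\)-algebra map
\[
\phi : R_{k} \to R_{k-1}[\lambda], \qquad v_{i,j} \mapsto v_{i,j} \ (i<k), \qquad v_{k,j} \mapsto \lambda a_{j}.
\]
A contraction of a prime ideal is prime. Equivalently, \(R_{k}/Q\) embeds into the domain \(R_{k-1}[\lambda]\), so \(Q\) is prime.

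It remains to identify \(Q\) with the ideal of \(Y_{2}\). The comorphism of \(\Phi\) is exactly \(\phi\), so a polynomial \(f \in R_{k}\) vanishes on \(\Phi(\mathbb{A}^{3k-2})\) precisely when \(f \circ \Phi\) vanishes on \(\mathbb{A}^{3k-2}\). Over an infinite field, a polynomial vanishing on all of affine space is zero, so this happens precisely when \(\phi(f) = 0\). Hence \(I(Y_{2}) = I(\Phi(\mathbb{A}^{3k-2})) = \ker\phi = Q\).

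The finite-field case needs separate handling. I would follow \thmref{Remark}{rem-base-field} and interpret varieties over \(\overline{\mathbb{F}}\). The kernel of \(\phi\) commutes with flat base change, and the same argument then applies verbatim over \(\overline{\mathbb{F}}\).

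I expect the main obstacle to be this identification step rather than primality itself. The difficulty is being precise that ``defining ideal'' means the kernel of \(\phi\), equivalently the full vanishing ideal of the closure, and not some explicitly generated candidate such as the \(2\times2\) minors of \(\begin{pmatrix} a \\ {\overline{v}}_{k}\end{pmatrix}\). That candidate ideal is \(I_{k}\) itself, which is not prime for \(k \ge 3\) because it also contains the \(Y_{1}\) component.

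If an explicit description of \(Q\) is wanted later, I would try the saturation \(Q = I_{k} : (a_{1},a_{2},a_{3})^{\infty}\). The approach would be to check the equality on the open set \(U_{k-1}\) from \thmref{Lemma}{lem-nondeg}, where rank \(\le 1\) together with \(a \neq 0\) forces \({\overline{v}}_{k} \in \mathbb{F}a\). That description is not needed for the lemma as stated.
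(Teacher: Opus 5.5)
Your proposal is correct and is essentially the paper's own argument: irreducibility of \(Y_{2}\) comes from \thmref{Lemma}{lem-top-irred} applied to the polynomial (hence Zariski-continuous) map \(\Phi\) on the irreducible \(\mathbb{A}^{3k-2}\), and primality holds because the eliminated graph ideal is the kernel of the substitution map \(R_{k} \to R_{k-1}[\lambda]\) into a domain. You also spell out that this kernel really is the vanishing ideal of \(Y_{2}\), over an infinite field or after passing to \(\overline{\mathbb{F}}\); the proof of this lemma leaves that implicit, and the paper invokes it as the standard identification later, in \thmref{Definition}{def-primecomponents} and \thmref{Proposition}{prop-rational}.
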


\begin{proof}[{Proof of \thmref{Lemma}{lem-y2-irred}}] The map \(\Phi\) is polynomial, hence continuous for the Zariski topology, since preimages of Zariski-closed sets under a polynomial map are again Zariski-closed by substitution. Its domain \(\mathbb{A}^{3k - 2}\) is irreducible, because the polynomial ring over \(\mathbb{F}\) is a domain and so its zero ideal is prime. \thmref{Lemma}{lem-top-irred} then gives \(Y_{2}\) irreducible directly. Primality of the elimination ideal follows the same way: it is the kernel of a map into a domain (\(\mathbb{F}\left\lbrack {\overline{v}}_{1},\ldots,{\overline{v}}_{k} \right\rbrack/( \cdot ) \cong \mathbb{F}\left\lbrack {\overline{v}}_{1},\ldots,{\overline{v}}_{k - 1},\lambda \right\rbrack\)), hence prime -- this holds for every \(k\) by this argument alone, additionally corroborated computationally at \(k = 4\) by computing the elimination ideal directly and confirming primality in Sage, as an independent check rather than a substitute for the argument.

\end{proof}

\begin{example}{the defining ideals of \(Y_{1}\) \textbf{and} \(Y_{2}\), for \(k = 2,3,4\)}{auto2} Writing \({\overline{v}}_{i} = \left( v_{i,1},v_{i,2},v_{i,3} \right)\) for the three coordinates of block \({\overline{v}}_{i}\): at \(k = 2\), \(Y_{2} \coloneqq  V\left( I_{2} \right)\) (the base-case convention above), so its ideal \textbf{is} \(I_{2}\), generated by \begin{equation}\fitdisplay{v_{1,2}v_{2,3} - v_{1,3}v_{2,2},\quad v_{1,3}v_{2,1} - v_{1,1}v_{2,3},\quad v_{1,1}v_{2,2} - v_{1,2}v_{2,1}.}\end{equation} (\(Y_{1}\) is not defined at \(k = 2\), \thmref{Definition}{def-y1y2}.) For \(k \geq 3\), \(Y_{1}\)'s defining ideal is simply \(I_{k - 1}\) itself, viewed inside \(R_{k}\) by leaving \({\overline{v}}_{k}\) free -- at \(k = 3\), exactly the three generators displayed above for \(I_{2}\), now inside \(R_{3}\) with \({\overline{v}}_{3}\) not appearing at all; at \(k = 4\), exactly \(I_{3}\)'s three generators (displayed below in this same example) inside \(R_{4}\) with \({\overline{v}}_{4}\) free. Unlike \(Y_{2}\), no elimination is needed to compute \(Y_{1}\)'s ideal -- it is already \(I_{k - 1}\)'s own generating set, unchanged. At \(k = 3\), \(Y_{2}\)'s elimination ideal (computed directly, not inferred) is generated by \begin{equation}\fitdisplay{v_{2,1}v_{3,1} + v_{2,2}v_{3,2} - v_{2,3}v_{3,3},\quad v_{1,1}v_{3,1} + v_{1,2}v_{3,2} - v_{1,3}v_{3,3},}\end{equation} \begin{equation}\fitdisplay{v_{1,2}v_{2,1}v_{3,2} - v_{1,1}v_{2,2}v_{3,2} - v_{1,3}v_{2,1}v_{3,3} + v_{1,1}v_{2,3}v_{3,3}}\end{equation} -- and these three repay a careful reading, because their shape is not the one the parametrization first suggests. \textbf{All three} are linear in the new block \({\overline{v}}_{3}\); what separates them is total degree, \(2,2,3\). The two quadratic ones are, up to sign, the incidence conditions \begin{equation}\fitdisplay{{\overline{v}}_{1} \cdot {S\left( {\overline{v}}_{3} \right)} = 0,\quad{\overline{v}}_{2} \cdot {S\left( {\overline{v}}_{3} \right)} = 0.}\end{equation} These are \textbf{consequences} of the proportionality \({\overline{v}}_{3} \propto {S\left( {\overline{w}}_{2} \right)}\), not its components: substituting \({\overline{v}}_{3} = \lambda{S\left( {\overline{w}}_{2} \right)}\) and using \(S^{2} = I\) turns them into \(\lambda\left( {\overline{v}}_{i} \cdot {\overline{w}}_{2} \right) = 0\), which holds because \({\overline{w}}_{2} = {\overline{v}}_{1} \times {\overline{v}}_{2}\) is orthogonal to both of its factors. (The components of \({\overline{v}}_{3} \times {S\left( {\overline{w}}_{2} \right)}\), which \textbf{are} the proportionality condition, are cubic, and none of them appears above.) The third generator, of degree 3, is verbatim the first component of \({\overline{w}}_{3}\). Note that no generator of this ideal could avoid \({\overline{v}}_{3}\) altogether: \(Y_{2}\) projects dominantly onto \(\mathbb{A}^{6}\), so the elimination ideal meets \(R_{2}\) in \(0\). \textbf{This ideal is a proper factor of \(I_{3}\), not equal to it} -- \(I_{3}\) is the intersection of this ideal with a second prime (the ``\(Y_{1}\)'' piece, where the \(k = 2\) bracket already vanishes); see the component-count example after \thmref{Theorem}{thm-componentcount} below, where this exact ideal reappears as one of \(I_{3}\)'s two primary components. At \(k = 4\) the same pattern repeats one degree higher, the three generators now having degrees \(2,3,4\) and again all three linear in the new block \({\overline{v}}_{4}\). The first two are once more incidence conditions forced by \({\overline{w}}_{3} \perp {S\left( {\overline{w}}_{2} \right)},{\overline{v}}_{3}\), namely \({\overline{v}}_{3} \cdot {S\left( {\overline{v}}_{4} \right)} = 0\) and \({\overline{w}}_{2} \cdot {\overline{v}}_{4} = 0\); the third is the negative of \({\overline{w}}_{4}\)'s first component: \begin{equation}\fitdisplay{v_{3,1}v_{4,1} + v_{3,2}v_{4,2} - v_{3,3}v_{4,3},}\end{equation} \begin{equation}\fitdisplay{v_{1,3}v_{2,2}v_{4,1} - v_{1,2}v_{2,3}v_{4,1} - v_{1,3}v_{2,1}v_{4,2} + v_{1,1}v_{2,3}v_{4,2} + v_{1,2}v_{2,1}v_{4,3} - v_{1,1}v_{2,2}v_{4,3},}\end{equation} \begin{equation}\fitdisplay{v_{1,3}v_{2,1}v_{3,1}v_{4,2} - v_{1,1}v_{2,3}v_{3,1}v_{4,2} + v_{1,3}v_{2,2}v_{3,2}v_{4,2} - v_{1,2}v_{2,3}v_{3,2}v_{4,2}}\end{equation} \begin{equation}\fitdisplay{- v_{1,2}v_{2,1}v_{3,1}v_{4,3} + v_{1,1}v_{2,2}v_{3,1}v_{4,3} - v_{1,3}v_{2,2}v_{3,3}v_{4,3} + v_{1,2}v_{2,3}v_{3,3}v_{4,3}.}\end{equation} All computed directly (elimination of \(\lambda\) from the graph ideal of \(\Phi\), in Sage), not just asserted from the pattern.

\end{example}

\begin{remark}{the closure, not the raw image, is what is needed}{rem-closure-suffices} A polynomial map's image need not itself be closed. The standard example is \(\mathbb{A}^{2} \rightarrow \mathbb{A}^{2}\), \((x,y) \mapsto (x,xy)\), whose image is not even locally closed. It is \textbf{constructible}, a finite union of locally closed sets, by Chevalley's theorem -- a result with no purely topological analogue for general continuous maps \cite{Youcis2014}.

None of this troubles \(Y_{2}\), for two reasons. First, \thmref{Definition}{def-y1y2} \textbf{defines} \(Y_{2}\) as \(\overline{\Phi( \cdot )}\) from the outset. Second, the decomposition \(V\left( I_{k} \right) = Y_{1} \cup Y_{2}\) used in \thmref{Theorem}{thm-codim}'s proof below comes from a direct case split rather than from taking closures and hoping. Take a point \(\left( p,{\overline{v}}_{k} \right) \in V\left( I_{k} \right)\). Either \({S\left( {\overline{w}}_{k - 1}(p) \right)} = 0\), which puts it in \(Y_{1}\) directly; or \({\overline{v}}_{k} = \lambda{S\left( {\overline{w}}_{k - 1}(p) \right)}\) for some \(\lambda\), in which case \(\left( p,{\overline{v}}_{k} \right) = \Phi(p,\lambda)\) is \textbf{literally} a value of \(\Phi\) rather than merely a limit point, and so lies in \(Y_{2}\) with no appeal to closure at all.

The reverse containment \(Y_{1} \cup Y_{2} \subseteq V\left( I_{k} \right)\) needs only that \(\Phi( \cdot ) \subseteq V\left( I_{k} \right)\) (a cross product of a vector with itself vanishes: \({S\left( {\overline{w}}_{k - 1} \right)} \times \left( \lambda{S\left( {\overline{w}}_{k - 1} \right)} \right) = 0\) identically) together with \(V\left( I_{k} \right)\) already being closed, automatic since it is a vanishing locus by definition. So the theorem never needs ``the image of \(\Phi\) is closed'' -- only that it is contained in the already-closed \(V\left( I_{k} \right)\), which is immediate.

\end{remark}

\begin{lemma}{dimension of \(Y_{2}\), and where it lives}{lem-y2-dim} With \(Y_{2}\) as in \thmref{Lemma}{lem-y2-irred} and \(U_{k - 1}\) as in \thmref{Lemma}{lem-nondeg}: \(\dim Y_{2} = 3k - 2\), and \(Y_{2}\) meets \(\pi^{- 1}\left( U_{k - 1} \right)\) nonemptily, where \(\pi\) is the projection of \thmref{Definition}{def-y1y2}, forgetting \({\overline{v}}_{k}\).

\end{lemma}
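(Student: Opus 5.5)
The plan is to compute $\dim Y_2$ from the parametrization $\Phi$, and to get both the upper and the lower bound from its shape. The nonemptiness claim comes along for free.

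\textbf{Upper bound.} $Y_2=\overline{\Phi(\mathbb{A}^{3k-2})}$ is the closure of the image of an irreducible variety of dimension $3(k-1)+1=3k-2$ under a morphism. The dimension of the closure of the image is at most the dimension of the source, since the induced map of function fields $\mathbb{F}(Y_2)\hookrightarrow \mathbb{F}(\mathbb{A}^{3k-2})$ bounds transcendence degree. Equivalently, in the language of \thmref{Lemma}{lem-y2-irred}, the coordinate ring of $Y_2$ embeds in $\mathbb{F}[\overline{v}_1,\ldots,\overline{v}_{k-1},\lambda]$. Hence $\dim Y_2\le 3k-2$.

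\textbf{Lower bound.} Here I show $\Phi$ is generically injective, indeed birational onto its image. On $U_{k-1}\times\mathbb{A}^1$ the vector $S(\overline{w}_{k-1}(p))$ is nonzero, because $S$ is invertible. So from $\Phi(p,\lambda)=(p,\lambda S(\overline{w}_{k-1}(p)))$ one recovers $p$ as $\pi$ of the image, and recovers $\lambda$ as $v_{k,i}/(S\overline{w}_{k-1})_i$ for any index $i$ where the denominator is nonzero. On the algebra side, the comorphism $R_k\to\mathbb{F}[\overline{v}_1,\ldots,\overline{v}_{k-1},\lambda]$ sends $v_{i,j}\mapsto v_{i,j}$ for $i<k$ and $v_{k,j}\mapsto\lambda\,(S\overline{w}_{k-1})_j$. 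Its image therefore contains $\mathbb{F}[\overline{v}_1,\ldots,\overline{v}_{k-1}]$, and it contains $\lambda\,(S\overline{w}_{k-1})_i$ for an index $i$ with $(S\overline{w}_{k-1})_i\neq 0$ as a polynomial. Such an $i$ exists by \thmref{Lemma}{lem-nondeg}. So the fraction field of the image contains $\lambda$, and the image has transcendence degree $3k-2$. Combined with the upper bound, this gives $\dim Y_2=3k-2$. If one prefers a fibre-dimension argument: the fibre of $\Phi$ over any point of $\Phi(U_{k-1}\times\mathbb{A}^1)$ is a single point, so the generic fibre has dimension $0$.

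\textbf{Nonemptiness.} Take the witness $p^{(k-1)}$ of \thmref{Lemma}{lem-nondeg}. The point $\Phi(p^{(k-1)},0)=(p^{(k-1)},0)$ lies in $Y_2$, and its projection $\pi(p^{(k-1)},0)=p^{(k-1)}$ lies in $U_{k-1}$. So $Y_2\cap\pi^{-1}(U_{k-1})\neq\emptyset$, with an explicit point.

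The main obstacle is making the lower bound rigorous over a possibly non-algebraically-closed $\mathbb{F}$, where ``dimension'' must mean the Krull dimension of $R_k/\mathfrak{p}$ for the prime $\mathfrak{p}$ of \thmref{Lemma}{lem-y2-irred}. I would therefore phrase everything via transcendence degree of the fraction field of the domain $R_k/\mathfrak{p}$, viewed as a subring of $\mathbb{F}[\overline{v}_1,\ldots,\overline{v}_{k-1},\lambda]$. The only nontrivial input is then the nonvanishing of some component of $\overline{w}_{k-1}$ as a polynomial, which \thmref{Lemma}{lem-nondeg} supplies.
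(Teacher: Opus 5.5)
Your proof is correct and follows the paper's route. Both arguments use generic injectivity of $\Phi$ over $U_{k-1}\times\mathbb{A}^1$, with $\lambda$ recovered by dividing by a nonzero coordinate of $S(\overline{w}_{k-1})$, and both exhibit an explicit point of $\Phi(U_{k-1}\times\mathbb{A}^1)$ for nonemptiness (you take $\lambda=0$, the paper takes $\lambda=1$). Your transcendence-degree formulation, with the explicit upper bound from the function-field embedding, makes rigorous the step ``injectivity gives $\dim Y_2=3k-2$'', which the paper asserts without further justification, and it remains valid over a non-closed $\mathbb{F}$.
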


\begin{proof}[{Proof of \thmref{Lemma}{lem-y2-dim}}] The projection \(\pi:\mathbb{A}^{3k} \rightarrow \mathbb{A}^{3(k - 1)}\) discards the last block, so that \(\pi({\overline{v}}_{1},\ldots,{\overline{v}}_{k}) = \left( {\overline{v}}_{1},\ldots,{\overline{v}}_{k - 1} \right)\). Hence \(\pi^{- 1}\left( U_{k - 1} \right) = U_{k - 1} \times \mathbb{A}^{3}\), the set of points whose \textbf{first} \(k - 1\) blocks lie in \(U_{k - 1}\) and whose \({\overline{v}}_{k}\) is completely unconstrained. Over \(U_{k - 1}\), \(\Phi\) is injective. Both \(p\) and \(\lambda\) can be recovered from \(\Phi(p,\lambda)\): \(p\) directly, and \(\lambda\) by dividing any nonzero coordinate of \({\overline{v}}_{k} = \lambda{S\left( {\overline{w}}_{k - 1}(p) \right)}\) by the corresponding coordinate of \(S\left( {\overline{w}}_{k - 1}(p) \right)\); this division is well-defined since \({S\left( {\overline{w}}_{k - 1}(p) \right)} \neq 0\) on \(U_{k - 1}\). Injectivity gives \(\dim Y_{2} = \dim\left( U_{k - 1} \times \mathbb{A}^{1} \right) = 3(k - 1) + 1 = 3k - 2\). Since \(U_{k - 1}\) is nonempty by \thmref{Lemma}{lem-nondeg}, \(\Phi(U_{k - 1} \times \left\{ 1 \right\})\) is a nonempty subset of \(Y_{2} \cap \pi^{- 1}\left( U_{k - 1} \right)\).

\end{proof}

\begin{theorem}{\(I_{k}\) has codimension 2}{thm-codim} For every \(k \geq 2\): \(\dim V\left( I_{k} \right) = 3k - 2\), i.e. \(\text{codim}\left( I_{k} \right) = 2\).

\end{theorem}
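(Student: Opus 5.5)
The plan is an induction on \(k\), with the decomposition \(V\left( I_{k} \right) = Y_{1} \cup Y_{2}\) of \thmref{Remark}{rem-closure-suffices} as the engine. Dimensions are computed over \(\overline{\mathbb{F}}\) (or, equivalently, via Krull dimension of \(R_{k}/I_{k}\)), so that the set-theoretic case split is legitimate.

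\textbf{Base case \(k = 2\).} By \thmref{Theorem}{thm-determinantal}, \(I_{2}\) is the ideal of \(2 \times 2\) minors of the generic \(2 \times 3\) matrix with rows \({\overline{v}}_{1},{\overline{v}}_{2}\). Its zero set consists of the pairs spanning at most a line. It is therefore the closure of the image of \((u,\lambda,\mu) \mapsto (\lambda u,\mu u)\), generically finite-to-one up to the one-dimensional scaling \((u,\lambda,\mu) \sim (tu,\lambda/t,\mu/t)\), which gives dimension \(3 + 2 - 1 = 4 = 3 \cdot 2 - 2\). Alternatively, quote the classical determinantal codimension \((2 - 1)(3 - 1) = 2\).

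\textbf{Inductive step \(k \geq 3\).} Assume \(\dim V\left( I_{k - 1} \right) = 3(k - 1) - 2 = 3k - 5\).

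\begin{enumerate}
\item The case split of \thmref{Remark}{rem-closure-suffices} gives \(V\left( I_{k} \right) = Y_{1} \cup Y_{2}\), both inclusions already argued there. Hence \(\dim V\left( I_{k} \right) = \max\left( \dim Y_{1},\dim Y_{2} \right)\).
\item For \(Y_{1} = V\left( I_{k - 1} \right) \times \mathbb{A}^{3}\), dimension is additive under product with affine space, so \(\dim Y_{1} = (3k - 5) + 3 = 3k - 2\) by the induction hypothesis.
\item \thmref{Lemma}{lem-y2-dim} gives \(\dim Y_{2} = 3k - 2\).
\end{enumerate}

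So \(\dim V\left( I_{k} \right) = 3k - 2\), and hence \(\text{codim}\left( I_{k} \right) = 3k - (3k - 2) = 2\). Codimension in the polynomial ring equals ambient dimension minus \(\dim R_{k}/I_{k}\), and \(R_{k}\) is catenary and equidimensional, so this translation is valid.

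\textbf{Main obstacle.} The weakest link is the dimension count for \(Y_{2}\) in \thmref{Lemma}{lem-y2-dim}. There, injectivity of \(\Phi\) on \(U_{k - 1} \times \mathbb{A}^{1}\) is converted into an equality of dimensions, and this needs care. The repair I would use: \(\Phi\) has the regular left inverse \((p,{\overline{v}}_{k}) \mapsto p\) on the first factor, and \(\lambda\) is recovered as a rational function on the image. So \(\Phi\) restricted to the dense open \(U_{k - 1} \times \mathbb{A}^{1}\) is birational onto its image, and \(\dim Y_{2} = \dim \mathbb{A}^{3k - 2} = 3k - 2\) follows from equality of transcendence degrees of function fields. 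Equivalently, the pullback \(\Phi^{\ast}\) identifies the function field of the irreducible \(Y_{2}\) (\thmref{Lemma}{lem-y2-irred}) with \(\mathbb{F}\left( {\overline{v}}_{1},\ldots,{\overline{v}}_{k - 1},\lambda \right)\).

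A secondary subtlety is that over a non-closed \(\mathbb{F}\) the pointwise case split should be read over \(\overline{\mathbb{F}}\). Dimension is unchanged by base field extension, so nothing is lost.
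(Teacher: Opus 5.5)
Your proposal is correct and matches the paper's proof essentially step for step. Both run an induction on $k$ with the rank-$\leq 1$ locus of the generic $2\times 3$ matrix as the base case, and both use the case split $V(I_k)=Y_1\cup Y_2$ of \thmref{Remark}{rem-closure-suffices}, getting $\dim Y_1=(3k-5)+3$ from the inductive hypothesis and $\dim Y_2=3k-2$ from \thmref{Lemma}{lem-y2-dim}. Your function-field argument for $\dim Y_2$ is a sound tightening of the paper's one-line ``injectivity gives the dimension'' step, rather than a different route: it recovers $\lambda$ as the rational function $v_{k,i}/S({\overline{w}}_{k-1})_i$, so that $\Phi^{\ast}$ identifies the function field of $Y_2$ with $\mathbb{F}({\overline{v}}_1,\ldots,{\overline{v}}_{k-1},\lambda)$.
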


\begin{proof}[{Proof of \thmref{Theorem}{thm-codim}}] The proof is by induction on \(k\). \textbf{Base case \(k = 2\)}: \(V\left( I_{2} \right)\) is the rank-\(\leq 1\) locus of a \(2 \times 3\) matrix with independent-indeterminate entries. Its dimension is the standard \(4 = 3 \cdot 2 - 2\) for a generic determinantal variety of this shape: the classical codimension formula for the rank-\(\leq t\) locus of a generic \(m \times n\) matrix gives \((m - t)(n - t) = 1 \cdot 2 = 2\) here \cite{BrunsVetter1988}, and the same value falls out of viewing \(V\left( I_{2} \right)\) as the affine cone over the Segre embedding \(\mathbb{P}^{1} \times \mathbb{P}^{2} \hookrightarrow \mathbb{P}^{5}\), of dimension \(1 + 2 + 1 = 4\). The value is confirmed directly by Gröbner-basis computation as well (\thmref{Example}{ex-dim-codim}). \textbf{Inductive step}: by \thmref{Theorem}{thm-determinantal}, and since \(a \times b = 0\) if and only if \(a,b\) are linearly dependent, \(V\left( I_{k} \right) = Y_{1} \cup Y_{2}\) (\thmref{Definition}{def-y1y2}). The locus \({S\left( {\overline{w}}_{k - 1} \right)} = 0\) gives \(Y_{1}\), with \({\overline{v}}_{k}\) unconstrained; the locus \({S\left( {\overline{w}}_{k - 1} \right)} \neq 0\) with \({\overline{v}}_{k}\) proportional to it gives a subset of \(Y_{2}\). This is the direct case split of \thmref{Remark}{rem-closure-suffices}, not merely a closure-level approximation. By the inductive hypothesis, \(\dim V\left( I_{k - 1} \right) = 3(k - 1) - 2\), so \(\dim Y_{1} = 3(k - 1) - 2 + 3 = 3k - 2\). Also \(\dim Y_{2} = 3k - 2\), by \thmref{Lemma}{lem-y2-dim}. Hence \(\dim V\left( I_{k} \right) = \dim(Y_{1} \cup Y_{2}) = 3k - 2\). This dimension statement alone needs no containment argument; that is needed only for the sharper component-count result below.

\end{proof}

\begin{example}{dimension and codimension, for \(k = 2,3,4\)}{ex-dim-codim} Values of \(\dim R_{k}\) (the ambient dimension, \(= 3k\)), \(\dim(R_{k}/I_{k})\) (the dimension of the affine variety \(V\left( I_{k} \right)\) -- \textbf{not} ``\(\dim I_{k}\)'', an ideal is not a variety and does not itself have a dimension in this sense), and \(\text{codim}\left( I_{k} \right) \coloneqq  \dim R_{k} - \dim(R_{k}/I_{k})\):

\begin{longtable}[]{@{}cccc@{}}
\toprule\noalign{}
\endhead
\bottomrule\noalign{}
\endlastfoot
\(k\) & \(\dim R_{k}\) & \(\dim(R_{k}/I_{k})\) & \(\text{codim}\left( I_{k} \right)\) \\
2 & 6 & 4 & 2 \\
3 & 9 & 7 & 2 \\
4 & 12 & 10 & 2 \\
\end{longtable}

All three rows follow directly from \thmref{Theorem}{thm-codim}'s formula \(\dim(R_{k}/I_{k}) = 3k - 2\); also confirmed by direct Gröbner-basis dimension computation in Sage at each \(k\), as an independent check.

\end{example}

\begin{proposition}{\(Y_{2} \nsubseteq Y_{1}\)}{prop-y2-not-in-y1} With \(Y_{1},Y_{2}\) as in the proof of \thmref{Theorem}{thm-codim}: \(Y_{2} \nsubseteq Y_{1}\).

\end{proposition}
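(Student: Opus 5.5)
To prove $Y_{2} \nsubseteq Y_{1}$, it suffices to exhibit a single point of $Y_{2}$ that lies outside $Y_{1}$. The natural candidate comes from \thmref{Lemma}{lem-y2-dim}, which already says that $Y_{2}$ meets $\pi^{-1}(U_{k-1})$ nonemptily. On the other side, $Y_{1} = V(I_{k-1}) \times \mathbb{A}^{3}$ equals $\pi^{-1}(V(I_{k-1}))$, and this is precisely the complement of $\pi^{-1}(U_{k-1})$ in $\mathbb{A}^{3k}$. Here $U_{k-1}$ is the locus where ${\overline{w}}_{k-1} \neq 0$, while $V(I_{k-1})$ is the locus where all three components of ${\overline{w}}_{k-1}$ vanish, so the two sets are complementary. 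Consequently any point of $Y_{2} \cap \pi^{-1}(U_{k-1})$ is a point of $Y_{2} \setminus Y_{1}$.

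To keep the argument concrete and self-contained, I will name the point explicitly rather than rely only on the nonemptiness statement. Take the witness $p^{(k-1)}$ of \thmref{Lemma}{lem-nondeg}, for which ${\overline{w}}_{k-1}(p^{(k-1)})$ is $e_{3}$ or $e_{2}$, in particular nonzero. Set $\lambda = 1$ and consider $\Phi(p^{(k-1)},1) = \bigl(p^{(k-1)}, S({\overline{w}}_{k-1}(p^{(k-1)}))\bigr)$. This is literally a value of $\Phi$, so it lies in $Y_{2}$ with no closure argument needed. Its $\pi$-image is $p^{(k-1)} \in U_{k-1}$, so $p^{(k-1)} \notin V(I_{k-1})$. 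Hence the point is not in $Y_{1}$.

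The main obstacle is a set-theoretic subtlety rather than a computational one: "$\notin V(I_{k-1})$" must mean that some generator of $I_{k-1}$ fails to vanish at the point. Since $I_{k-1}$ is by definition generated by the components of ${\overline{w}}_{k-1}$, nonvanishing of ${\overline{w}}_{k-1}$ at $p^{(k-1)}$ is exactly that condition, and no radicality result is needed. The base case needs separate handling. For $k = 3$, ${\overline{w}}_{2}$ is the base case of the recursion and the witness is $(e_{1},e_{2})$, which \thmref{Lemma}{lem-nondeg} also covers. For $k = 2$ the proposition is vacuous, since $Y_{1}$ is undefined there. I would close by remarking that the same point also witnesses irredundancy in the later component count.
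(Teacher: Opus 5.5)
Your proof is correct and takes essentially the paper's approach. Both arguments exhibit a literal value of $\Phi$ over a point $p \in U_{k-1}$. Such a point lies in $Y_{2}$ with no closure needed, and lies outside $Y_{1} = V(I_{k-1}) \times \mathbb{A}^{3}$ because ${\overline{w}}_{k-1}(p) \neq 0$. The only difference is that the paper takes $\lambda = 0$, giving the point $(p,0)$ on the zero section, while you take $\lambda = 1$, the same choice as in the proof of \thmref{Lemma}{lem-y2-dim}; this difference is immaterial.
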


\begin{proof}[{Proof of \thmref{Proposition}{prop-y2-not-in-y1}}] Pick \(p \in U_{k - 1}\) (\thmref{Lemma}{lem-nondeg}); then \(\Phi(p,0) = (p,0) \in Y_{2}\) but \({\overline{w}}_{k - 1}(p) \neq 0\) puts \((p,0) \notin Y_{1}\).

\end{proof}

\begin{proposition}{no component of \(Y_{1}\) lies in \(Y_{2}\)}{prop-y1-not-in-y2} Let \(Z\) be an irreducible component of \(V\left( I_{k - 1} \right)\) \textbf{of dimension \(3(k - 1) - 2\)}. Then \(Z \times \mathbb{A}^{3} \nsubseteq Y_{2}\).

The dimension hypothesis is stated rather than cited. \thmref{Theorem}{thm-codim} gives the dimension of the \textbf{union} \(V\left( I_{k - 1} \right)\), which does not by itself say that every component attains it; the per-component statement is part of \thmref{Theorem}{thm-componentcount}, and this proposition is used inside that theorem's own induction, where it is available as the inductive hypothesis at level \(k - 1\).

\end{proposition}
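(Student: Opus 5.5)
The plan is a dimension count. \(Z \times \mathbb{A}^{3}\) and \(Y_{2}\) are both irreducible closed sets of the same dimension \(3k-2\). A containment between them would therefore have to be an equality, and that equality is excluded by \thmref{Proposition}{prop-y2-not-in-y1}.

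\textbf{Step 1: \(Z \times \mathbb{A}^{3}\) is irreducible of dimension \(3k-2\).} I would argue ideal-theoretically, so as not to depend on the ground field being algebraically closed. Let \(\mathfrak{p} \subset R_{k-1}\) be the prime of \(Z\). The extended ideal \(\mathfrak{p}R_{k}\) defines \(Z \times \mathbb{A}^{3}\), and
\[ R_{k}/\mathfrak{p}R_{k} \cong (R_{k-1}/\mathfrak{p})[v_{k,1},v_{k,2},v_{k,3}], \]
a polynomial ring over a domain. Hence \(\mathfrak{p}R_{k}\) is prime. Its Krull dimension is \(\dim(R_{k-1}/\mathfrak{p}) + 3 = 3(k-1) - 2 + 3 = 3k-2\), using the dimension hypothesis on \(Z\). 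If one prefers points, one can instead pass to \(\overline{\mathbb{F}}\) as in \thmref{Remark}{rem-base-field}.

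\textbf{Step 2: \(Y_{2}\) is irreducible of dimension \(3k-2\).} This is exactly \thmref{Lemma}{lem-y2-irred} together with \thmref{Lemma}{lem-y2-dim}.

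\textbf{Step 3: conclude.} Suppose \(Z \times \mathbb{A}^{3} \subseteq Y_{2}\). A closed irreducible subset of an irreducible affine variety that has the same finite dimension must be the whole variety. Equivalently, a prime containing another prime with the same coheight in a finitely generated \(\mathbb{F}\)-algebra equals it. So \(Z \times \mathbb{A}^{3} = Y_{2}\). But \(Z \times \mathbb{A}^{3} \subseteq V(I_{k-1}) \times \mathbb{A}^{3} = Y_{1}\), which would give \(Y_{2} \subseteq Y_{1}\), contradicting \thmref{Proposition}{prop-y2-not-in-y1}.

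A more explicit alternative to Step 3 uses the point from that proposition's proof directly. For \(p \in U_{k-1}\), the point \((p,0)\) lies in \(Y_{2}\) but not in \(Y_{1}\).

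\textbf{Main obstacle.} The delicate step is the equality-of-dimension argument in Step 3. It needs the dimension of \(Y_{2}\) to be exact, not just an upper bound, and the irreducibility statements to hold scheme-theoretically over a possibly non-closed field. I would handle this by phrasing everything in terms of primes and Krull dimension of finitely generated domains, where "contained prime of equal dimension implies equal" is standard. I would use \thmref{Lemma}{lem-y2-dim}'s injectivity argument, run over \(\overline{\mathbb{F}}\), to certify that \(\dim Y_{2}\) is exactly \(3k-2\).
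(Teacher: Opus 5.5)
Your proposal is correct and follows the paper's proof. Irreducibility plus equal dimension \(3k-2\), via \thmref{Lemma}{lem-y2-irred} and \thmref{Lemma}{lem-y2-dim}, turns containment into equality, and that equality is then refuted; you route the refutation through \thmref{Proposition}{prop-y2-not-in-y1}, whereas the paper observes directly that \(Z \times \mathbb{A}^{3}\) misses \(\pi^{-1}(U_{k-1})\) while \(Y_{2}\) meets it, which is the same fact since \(Y_{1} = V(I_{k-1}) \times \mathbb{A}^{3}\) is exactly the complement of \(\pi^{-1}(U_{k-1})\).
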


\begin{proof}[{Proof of \thmref{Proposition}{prop-y1-not-in-y2}}] Both \(Z \times \mathbb{A}^{3}\) and \(Y_{2}\) are irreducible: \(Y_{2}\) by \thmref{Lemma}{lem-y2-irred}, and \(Z \times \mathbb{A}^{3}\) because a product of irreducible varieties is irreducible (\(Z\) is irreducible by hypothesis, and \(\mathbb{A}^{3}\) trivially is). Both have the \textbf{same} dimension \(3k - 2\): \(Z \times \mathbb{A}^{3}\) by the hypothesis on \(Z\), and \(Y_{2}\) by \thmref{Lemma}{lem-y2-dim}. Containment would therefore force equality. This is refuted: \(Z \times \mathbb{A}^{3}\) is disjoint from \(\pi^{- 1}\left( U_{k - 1} \right)\), since \({\overline{w}}_{k - 1}\) vanishes identically on \(Z \subset V\left( I_{k - 1} \right)\); but \(Y_{2}\) meets \(\pi^{- 1}\left( U_{k - 1} \right)\), by \thmref{Lemma}{lem-y2-dim}.

\end{proof}

\begin{theorem}{component count}{thm-componentcount} For every \(k \geq 2\): \(V\left( I_{k} \right)\) has exactly \(k - 1\) irreducible components, each of dimension \(3k - 2\). The component indexed \(j = 2,\ldots,k\) involves exactly the first \(j\) coordinate blocks and nothing later.

\end{theorem}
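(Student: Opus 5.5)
We proceed by induction on \(k\), proving simultaneously the three assertions of the statement: the number of components, that every component has dimension \(3k - 2\), and the block-involvement pattern. \textbf{Base case \(k = 2\).} Here \(V\left( I_{2} \right)\) is the rank-\(\leq 1\) locus of the generic \(2 \times 3\) matrix with rows \({\overline{v}}_{1},{\overline{v}}_{2}\). It is irreducible, being the affine cone over the Segre variety \(\mathbb{P}^{1} \times \mathbb{P}^{2}\), or alternatively the closure of the image of the irreducible space \(\{(a,b,\lambda)\}\) under a polynomial map, by \thmref{Lemma}{lem-top-irred}. Its dimension is \(4 = 3 \cdot 2 - 2\) by \thmref{Theorem}{thm-codim}. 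It involves exactly blocks \(1\) and \(2\), so it is the single component indexed \(j = 2\).

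\textbf{Inductive step.} Fix \(k \geq 3\). By the inductive hypothesis, \(V\left( I_{k - 1} \right) = Z_{2} \cup \cdots \cup Z_{k - 1}\) is an irredundant decomposition, where each \(Z_{j}\) is irreducible of dimension \(3(k - 1) - 2\) and involves exactly blocks \(1,\ldots,j\). By the case split of \thmref{Remark}{rem-closure-suffices},
\[
V\left( I_{k} \right) = Y_{1} \cup Y_{2} = \left( Z_{2} \times \mathbb{A}^{3} \right) \cup \cdots \cup \left( Z_{k - 1} \times \mathbb{A}^{3} \right) \cup Y_{2}.
\]
Each \(Z_{j} \times \mathbb{A}^{3}\) is irreducible of dimension \(3k - 2\), being a product of irreducible varieties, and \(Y_{2}\) is irreducible of dimension \(3k - 2\) by \thmref{Lemma}{lem-y2-irred} and \thmref{Lemma}{lem-y2-dim}. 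The decomposition is irredundant for the following reasons:
\begin{itemize}
\item Distinct \(Z_{i} \times \mathbb{A}^{3}\) and \(Z_{j} \times \mathbb{A}^{3}\) cannot contain one another, since the \(Z_{j}\) are irredundant at level \(k - 1\).
\item No \(Z_{j} \times \mathbb{A}^{3}\) lies in \(Y_{2}\), by \thmref{Proposition}{prop-y1-not-in-y2}. This is legitimate because the dimension hypothesis on \(Z_{j}\) is exactly the inductive hypothesis.
\item \(Y_{2}\) lies in no \(Z_{j} \times \mathbb{A}^{3}\), since \(Y_{2} \nsubseteq Y_{1}\) by \thmref{Proposition}{prop-y2-not-in-y1}. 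Here irreducibility of \(Y_{2}\) is needed to pass from non-containment in the union \(Y_{1}\) to non-containment in each member; in fact the proof of \thmref{Proposition}{prop-y2-not-in-y1} shows non-containment in the union directly, which suffices.
\end{itemize}
Since a finite union of irreducible closed sets, none contained in another, is the irreducible decomposition, \(V\left( I_{k} \right)\) has exactly \((k - 2) + 1 = k - 1\) components, all of dimension \(3k - 2\).

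\textbf{Block involvement.} This is the part where precision matters most, and it is the main obstacle. We read ``involves exactly the first \(j\) blocks'' to mean two things: the component is of the form \(W \times \mathbb{A}^{3(k - j)}\) with \(W \subset \mathbb{A}^{3j}\), and it is not of that form for any smaller \(j\), equivalently, its ideal is generated in \(R_{j}\) and not in \(R_{j - 1}\).
\begin{itemize}
\item For \(j < k\), the component \(Z_{j} \times \mathbb{A}^{3}\) inherits the pattern from \(Z_{j}\). Appending the free factor \(\mathbb{A}^{3}\) leaves both properties unchanged, since the ideal of \(Z_{j} \times \mathbb{A}^{3}\) is just the extension of the ideal of \(Z_{j}\).
\item For the new component \(Y_{2}\), indexed \(j = k\), we must show it genuinely involves \({\overline{v}}_{k}\). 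This holds because \(Y_{2} \neq Y_{2}' \times \mathbb{A}^{3}\) for any \(Y_{2}' \subset \mathbb{A}^{3(k - 1)}\). Indeed, over \(U_{k - 1}\) each fibre of \(\pi\) restricted to \(Y_{2}\) is the line \(\mathbb{F} \cdot S\left( {\overline{w}}_{k - 1}(p) \right)\), not all of \(\mathbb{A}^{3}\). For example, at the witness \(p^{(k - 1)}\) of \thmref{Lemma}{lem-nondeg}, the point \((p^{(k - 1)},e_{i})\) with \(e_{i}\) not proportional to \(S\left( {\overline{w}}_{k - 1}(p^{(k - 1)}) \right)\) lies outside \(V\left( I_{k} \right)\), hence outside \(Y_{2}\).
\item \(Y_{2}\) also involves every earlier block, since it projects dominantly onto \(\mathbb{A}^{3(k - 1)}\), as noted in \thmref{Example}{auto2}. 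This confirms that its involvement is exactly the first \(k\) blocks, with nothing later because \(k\) is the last block.
\end{itemize}

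Finally, we index \(Y_{2}\) as the component \(j = k\) and the \(Z_{j} \times \mathbb{A}^{3}\) as \(j = 2,\ldots,k - 1\), which yields the recursive pattern claimed in the statement.
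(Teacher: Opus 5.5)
Your argument is correct and is the paper's own proof: induction on $k$, the split $V(I_k)=Y_1\cup Y_2$ with $Y_1=\bigcup_j (Z_j\times\mathbb{A}^3)$, and \thmref{Proposition}{prop-y2-not-in-y1} together with \thmref{Proposition}{prop-y1-not-in-y2} (the latter fed by the per-component dimension from the inductive hypothesis) to rule out every containment. Your observation that the fibres of $Y_2$ over $U_{k-1}$ are lines, so that $Y_2$ is not a cylinder over $\mathbb{A}^{3(k-1)}$, usefully makes explicit the ``exactly the first $j$ blocks'' clause that the paper leaves implicit in its indexing.

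Two asides are off, though harmless. First, passing from $Y_2\not\subseteq Y_1$ to $Y_2\not\subseteq Z_j\times\mathbb{A}^3$ needs no irreducibility at all; irreducibility is what the converse direction would need. Second, dominance of $\pi|_{Y_2}$ only shows $P_k\cap R_{k-1}=0$, i.e.\ that there is no relation among the earlier blocks alone, not that every earlier block occurs in $P_k$. Under your own reading of the statement, however, that last bullet is not needed.
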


\begin{proof}[{Proof of \thmref{Theorem}{thm-componentcount}}] The proof is by induction on \(k\). \textbf{Base case \(k = 2\)}: the rank-\(\leq 1\) locus of a generic \(2 \times 3\) matrix is irreducible -- the affine cone over the Segre embedding \(\mathbb{P}^{1} \times \mathbb{P}^{2} \hookrightarrow \mathbb{P}^{5}\), equivalently the Motzkin-Taussky \cite{MotzkinTaussky1955} irreducibility of the \(2 \times 2\) commuting variety -- so exactly 1 component, matching \(k - 1 = 1\). \textbf{Step}: \(V\left( I_{k} \right) = Y_{1} \cup Y_{2}\) as in \thmref{Theorem}{thm-codim}'s proof. By the inductive hypothesis --- this theorem at level \(k - 1\), which is where the per-component dimension comes from --- \(V\left( I_{k - 1} \right)\) has \(k - 2\) irreducible components, each of dimension \(3(k - 1) - 2\), none contained in another; crossing with \(\mathbb{A}^{3}\) makes them \(Y_{1}\)'s components, each of dimension \(3k - 2\). That is exactly the hypothesis \thmref{Proposition}{prop-y1-not-in-y2} asks for, so it and \thmref{Proposition}{prop-y2-not-in-y1} together rule out containment between \(Y_{2}\) and any of them, in either direction. A finite union of irreducible closed sets with no containments is irredundant, giving exactly \((k - 2) + 1 = k - 1\) components.

\end{proof}

This gives \(I_{k}\)'s minimal primes and their count, but not yet that \(I_{k}\) is radical (so not yet that these are literally the components of a \textbf{primary} decomposition of \(I_{k}\) itself, as opposed to of \(\sqrt{I_{k}}\)) -- that is closed in \S{}7, after Cohen-Macaulayness is available.

\begin{definition}{the components \(P_{j}\)}{def-primecomponents} For \(k \geq 2\) and \(j = 2,\ldots,k\), let \(\Phi_{j}\) be \thmref{Definition}{def-y1y2}'s map \(\Phi\) run at parameter \(j\) (with the base convention \(\Phi_{2}(p,\lambda) \coloneqq  (p,\lambda p)\)), and let \begin{equation}\fitdisplay{\Psi_{j}:\mathbb{A}^{3(j - 1)} \times \mathbb{A}^{1} \times \mathbb{A}^{3(k - j)} \rightarrow \mathbb{A}^{3k},\quad(p,\lambda,q) \mapsto \left( \Phi_{j}(p,\lambda),q \right)}\end{equation} be \(\Phi_{j}\) with the remaining blocks \({\overline{v}}_{j + 1},\ldots,{\overline{v}}_{k}\) carried along untouched. Since \(\Psi_{j}\) is a polynomial map it \textbf{is}, read backwards, a homomorphism of \(\mathbb{F}\)-algebras \begin{equation}\fitdisplay{\varphi_{j}:R_{k} \rightarrow \mathbb{F}\lbrack p,\lambda,q\rbrack = \mathbb{F}\left\lbrack t_{1},\ldots,t_{3k - 2} \right\rbrack,}\end{equation} sending each variable \(v_{i,c}\) to the corresponding coordinate polynomial of \(\Psi_{j}\). Define \begin{equation}\fitdisplay{P_{j} \coloneqq  \ker\varphi_{j} \subset R_{k}.}\end{equation} Two consequences come for free. First, each \(P_{j}\) is \textbf{prime}, being the kernel of a homomorphism into a domain -- for every \(j\) including \(j = 2\), with no case distinction and no appeal to \thmref{Lemma}{lem-y2-irred}, which is stated only for \(k \geq 3\). Second, \(P_{j}\) does not depend on which \(k \geq j\) it is viewed inside of: enlarging \(k\) appends free variables to source and target alike.

\textbf{Why the kernel rather than the vanishing ideal.} It is tempting to define \(P_{j}\) as the vanishing ideal of the \(j\)-th component of \(V\left( I_{k} \right)\), and over \(\overline{\mathbb{F}}\) the two agree -- \thmref{Proposition}{prop-rational} identifies that component as the closure of \(\Psi_{j}\)'s image. But over a \textbf{finite} field ``vanishing ideal'' is the wrong notion outright: \(V\left( I_{k} \right)\left( \mathbb{F} \right)\) is then a finite set of points, and \(x^{q} - x\) vanishes on all of \(\mathbb{A}^{1}\left( \mathbb{F}_{q} \right)\) without being the zero polynomial. The kernel is the right object over every field at once, is well defined with no convention attached, and is what makes the base-change statement in \thmref{Remark}{rem-base-field} an identity rather than a coincidence.

\end{definition}

\begin{example}{a suitable point of \(Y_{2}\), concretely, at \(k = 3\)}{auto3} It is worth having a single explicit point, and not only the defining ideal of \thmref{Example}{ex-components-k3} below. Take \({\overline{v}}_{1} = (1,0,0)\) and \({\overline{v}}_{2} = (0,1,0)\), so that \({\overline{v}}_{1} \in U_{2}\) by \thmref{Lemma}{lem-nondeg}, giving \({\overline{w}}_{2} = {\overline{v}}_{1} \times {\overline{v}}_{2} = (0,0,1)\) and \({S\left( {\overline{w}}_{2} \right)} = (0,0,1)\). Taking \(\lambda = 1\) in \thmref{Definition}{def-y1y2}'s \(\Phi\), i.e. \({\overline{v}}_{3} \coloneqq  {S\left( {\overline{w}}_{2} \right)} = (0,0,1)\), gives the point \begin{equation}\fitdisplay{\left( {\overline{v}}_{1},{\overline{v}}_{2},{\overline{v}}_{3} \right) = \left( (1,0,0),(0,1,0),(0,0,1) \right) \in Y_{2} \subset \mathbb{A}^{9.}}\end{equation} One checks this directly: \({\overline{w}}_{3} = {S\left( {\overline{w}}_{2} \right)} \times {\overline{v}}_{3} = (0,0,1) \times (0,0,1) = 0\), a vector crossed with itself. So the point lies in \(V\left( I_{3} \right)\), as any point of the form \(\Phi(p,\lambda)\) must (\thmref{Definition}{def-y1y2}). The point lies on the \textbf{new} component \(P_{3}\), not on the inherited \(P_{2}\) -- \({\overline{v}}_{1}\) and \({\overline{v}}_{2}\) are independent here, so the \(k = 2\) bracket does not already vanish. It is \textbf{not} a rank-2 witness in \thmref{Lemma}{lem-witness-new}'s sense (that witness uses \(\lambda = 0\), not \(\lambda = 1\)) -- it illustrates \(Y_{2}\) itself, not the radicality argument's own witness points.

\end{example}

\begin{proposition}{rational parametrization of each component}{prop-rational} For \(k \geq 2\) and \(j = 2,\ldots,k\), the component of \(V\left( I_{k} \right)\) indexed \(j\) (\thmref{Theorem}{thm-componentcount}, over \(\overline{\mathbb{F}}\) per \thmref{Remark}{rem-base-field}) is exactly the closure of the image of \thmref{Definition}{def-primecomponents}' map \(\Psi_{j}\); equivalently, \(P_{j} = \ker\varphi_{j}\) is its vanishing ideal. In particular that component is a \textbf{rational} variety. The parametrization is moreover \textbf{birational} onto the component: \(\Psi_{j}\) is injective over \(U_{j - 1} \times \mathbb{A}^{1} \times \mathbb{A}^{3(k - j)}\) -- where, at \(j = 2\), the symbol \(U_{1}\) is to be read as \(\{{\overline{v}}_{1} \neq 0\}\), since \thmref{Lemma}{lem-nondeg} defines \(U_{j}\) only for \(j \geq 2\) and \({\overline{w}}_{1}\) does not exist.

\end{proposition}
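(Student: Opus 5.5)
The plan is to prove the proposition by induction on \(k\), using the recursive structure already behind \thmref{Theorem}{thm-componentcount}, and working over \(\overline{\mathbb{F}}\) throughout.

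\textbf{Step 1: identify the components by induction.} First I make precise which component the index \(j\) names. At level \(k\), write \(V(I_k)=Y_1\cup Y_2\). The components of \(Y_1=V(I_{k-1})\times\mathbb{A}^3\) are the level-\((k-1)\) components crossed with \(\mathbb{A}^3\); these carry the indices \(j=2,\dots,k-1\). The new component \(Y_2\) carries the index \(j=k\).

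Next I check that the parametrizations are compatible with this indexing. For \(j\le k-1\), \(\Psi_j\) at level \(k\) is \(\Psi_j\) at level \(k-1\) with one further free block \(q\) appended. So if the closure of the image of \(\Psi_j^{(k-1)}\) is the component \(C\), then the closure of the image of \(\Psi_j^{(k)}\) is \(\overline{C\times\mathbb{A}^3}=C\times\mathbb{A}^3\). This uses only that the closure of a product of images is the product of the closures, which holds because \(\overline{\operatorname{im}\Psi}\times\mathbb{A}^3\) is closed and contains the image densely.

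For \(j=k\), \(\Psi_k=\Phi_k\) with no trailing blocks, so the closure of its image is \(Y_2\) by \thmref{Definition}{def-y1y2}. For \(k=2\), \(\Phi_2(p,\lambda)=(p,\lambda p)\). Its image is the rank-\(\le 1\) locus minus the points with \(\overline{v}_1=0\ne\overline{v}_2\), and the closure is all of \(V(I_2)\) because that locus is irreducible of dimension \(4\) and the image has dimension \(4\). This settles the set-theoretic claim.

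\textbf{Step 2: the ideal statement.} Over the algebraically closed field \(\overline{\mathbb{F}}\), the vanishing ideal of \(\overline{\operatorname{im}\Psi_j}\) consists of the polynomials \(f\) with \(f\circ\Psi_j\equiv 0\) as a function on \(\overline{\mathbb{F}}^{3k-2}\). Since that field is infinite, this is equivalent to \(f\circ\Psi_j=0\) as a polynomial, i.e. \(\varphi_j(f)=0\). So the vanishing ideal equals \(\ker\varphi_j=P_j\), extended to \(\overline{\mathbb{F}}\). Rationality is then immediate: the component is the closure of the image of an affine space, and Step 3 shows this map is birational.

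\textbf{Step 3: injectivity and birationality.} On \(U_{j-1}\times\mathbb{A}^1\times\mathbb{A}^{3(k-j)}\), the coordinates \(p\) and \(q\) are read off directly. The remaining coordinate \(\lambda\) is recovered as in \thmref{Lemma}{lem-y2-dim}: divide a nonzero coordinate of \(\overline{v}_j\) by the corresponding coordinate of \(S(\overline{w}_{j-1}(p))\), or of \(p\) itself when \(j=2\). This recovery is a rational function on the image, so it gives a rational inverse defined on a dense open subset of the component. To see that this subset is dense, note that the domain region is nonempty by \thmref{Lemma}{lem-nondeg}, its image is dense in the irreducible component, and that image is contained in the open set where the divisor coordinate is nonzero. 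Therefore \(\Psi_j\) is birational onto the component.

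\textbf{Main obstacle.} The delicate point is the bookkeeping in Step 1: showing that the label "component indexed \(j\)" from \thmref{Theorem}{thm-componentcount} matches \(\overline{\operatorname{im}\Psi_j}\) at every level, including the \(k=2\) base case, where \(\Phi_2\) is not given by \thmref{Definition}{def-y1y2}. I would handle this by restating the induction of \thmref{Theorem}{thm-componentcount} with the explicit labelling "component \(j\) is \(\overline{\operatorname{im}\Psi_j}\)" built into the inductive hypothesis.
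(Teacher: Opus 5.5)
Your proposal is correct and follows essentially the paper's route. You match the component indexed \(j\) with \(\overline{\Psi_j(\mathbb{A}^{3k-2})}\) through the \(Y_1 \cup Y_2\) induction of \thmref{Theorem}{thm-componentcount}, identify the vanishing ideal with \(\ker\varphi_j\) via the standard image-closure/kernel correspondence over \(\overline{\mathbb{F}}\), and obtain birationality from \thmref{Lemma}{lem-y2-dim}'s recovery of \(\lambda\) by division.

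Your additions beyond the paper's terse proof are sound, namely the explicit \(k=2\) base case via \(\Phi_2(p,\lambda)=(p,\lambda p)\) and exhibiting that division as a rational inverse. That rational inverse is what ``birational'' actually needs, since injectivity alone would not suffice: an injective dominant map can be purely inseparable in positive characteristic.
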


\begin{proof}[{Proof of \thmref{Proposition}{prop-rational}}] At parameter \(j = k\) the map \(\Psi_{k}\) is \(\Phi\) itself, whose image closure is \(Y_{2}\) (\thmref{Definition}{def-y1y2}), and \thmref{Theorem}{thm-componentcount}'s induction identifies \(Y_{2}\) with the component indexed \(k\). For \(j < k\) the same construction run at parameter \(j\) gives that level's \(Y_{2}\), and appending the free blocks \({\overline{v}}_{j + 1},\ldots,{\overline{v}}_{k}\) crosses it with \(\mathbb{A}^{3(k - j)}\) -- which is exactly \thmref{Theorem}{thm-componentcount}'s description of the inherited component indexed \(j\). That the vanishing ideal of the closure of a polynomial map's image is the kernel of the corresponding algebra homomorphism is the standard identification, over \(\overline{\mathbb{F}}\).

For birationality, \thmref{Lemma}{lem-y2-dim}'s injectivity argument (over \(U_{j - 1}\), via \thmref{Lemma}{lem-nondeg} at level \(j - 1\)) applies verbatim at parameter \(j\), and appending the free blocks \(q\) unchanged does not affect injectivity.

\end{proof}

\begin{remark}{a reducible variety is not itself ``rational'' -- component-by-component is what \thmref{Proposition}{prop-rational} answers}{auto4} \(Y_{1} \coloneqq  V\left( I_{k - 1} \right) \times \mathbb{A}^{3}\) (\thmref{Definition}{def-y1y2}) is \textbf{not} irreducible for \(k \geq 4\) -- it is a union of \(k - 2\) of the components above (\thmref{Theorem}{thm-componentcount}'s inductive step) -- so a single rational parametrization \textbf{of \(Y_{1}\) as one object} is not meaningful in the usual sense (rationality is a property of irreducible varieties). What \textbf{is} meaningful, and what \thmref{Proposition}{prop-rational} supplies, is a rational parametrization of each of \(Y_{1}\)'s irreducible pieces individually -- exactly the components inherited from \(V\left( I_{k - 1} \right)\).

\end{remark}

\begin{example}{the components, worked out at \(k = 3\)}{ex-components-k3} Uniformly by \(k\), since the components' \textbf{count} and \textbf{block pattern} are already known (\thmref{Theorem}{thm-componentcount}, \thmref{Definition}{def-primecomponents}) and only their explicit generators change:

\begin{itemize}
\item
  \textbf{At \(k = 2\)}: \(I_{2}\) is already prime (\thmref{Theorem}{thm-componentcount}'s base case), so \(V\left( I_{2} \right)\) has exactly \(k - 1 = 1\) component -- itself.
\item
  \textbf{At \(k = 3\)}: exactly \(k - 1 = 2\) components, both computed directly (Gröbner bases, not inferred from the pattern alone), matching \(I_{3} = P_{2} \cap P_{3}\):

  \begin{itemize}
  \item
    \(P_{2}\), the ``\(Y_{1}\)'' piece (block pattern \(\{ 1,2\}\), matching that the length-2 sub-bracket already vanishes with \({\overline{v}}_{3}\) free) -- generated by exactly \(I_{2}\)'s three generators from the first example, now inside \(R_{3}\) with \({\overline{v}}_{3}\) simply not appearing: \begin{equation}\fitdisplay{v_{1,2}v_{2,3} - v_{1,3}v_{2,2},\quad v_{1,3}v_{2,1} - v_{1,1}v_{2,3},\quad v_{1,1}v_{2,2} - v_{1,2}v_{2,1}.}\end{equation}
  \item
    \(P_{3}\), the ``\(Y_{2}\)'' piece (block pattern \(\{ 1,2,3\}\)) -- exactly the ideal already computed in the \(Y_{2}\)-defining-ideal example above: \begin{equation}\fitdisplay{v_{2,1}v_{3,1} + v_{2,2}v_{3,2} - v_{2,3}v_{3,3},\quad v_{1,1}v_{3,1} + v_{1,2}v_{3,2} - v_{1,3}v_{3,3},}\end{equation} \begin{equation}\fitdisplay{v_{1,2}v_{2,1}v_{3,2} - v_{1,1}v_{2,2}v_{3,2} - v_{1,3}v_{2,1}v_{3,3} + v_{1,1}v_{2,3}v_{3,3}.}\end{equation}
  \end{itemize}
\item
  \textbf{At \(k = 4\)}: exactly \(k - 1 = 3\) components, block patterns \(\{ 1,2\}\), \(\{ 1,2,3\}\), \(\{ 1,2,3,4\}\):

  \begin{itemize}
  \item
    The first two, \(P_{2}\) and \(P_{3}\), are inherited \textbf{verbatim} from \(k = 3\) above, with \({\overline{v}}_{4}\) simply left free -- not recomputed, per \thmref{Definition}{def-primecomponents}' own ``does not depend on which \(k\)'' clause.
  \item
    The third, \(P_{4}\), is the new ``\(Y_{2}\)'' stratum -- exactly the \(k = 4\) row of the earlier \(Y_{2}\)-defining-ideal example (generators omitted here to avoid repeating that display; same growth pattern, quartic with up to 8 terms per generator).
  \end{itemize}
\end{itemize}

\end{example}

\section{Cohen-Macaulayness}

\begin{proposition}{Eagon-Northcott / Hilbert-Burch, applied}{prop-detideal} Let \(M\) be a \(2 \times 3\) matrix with entries in a polynomial ring \(R\) over a field (arbitrary entries, not necessarily independent indeterminates), and \(J\) the ideal of its \(2 \times 2\) minors. If \(J\) has the \textbf{expected} codimension \(3 - 2 + 1 = 2\), then \(J\) is a perfect ideal of grade 2 -- \(\text{pd}(R/J) = 2\), hence \(R/J\) is Cohen-Macaulay.

\end{proposition}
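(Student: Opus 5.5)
The plan is to establish perfection by exhibiting an explicit length-two free resolution of $R/J$ and proving it exact through the Buchsbaum--Eisenbud acyclicity criterion; Cohen--Macaulayness then follows from Auslander--Buchsbaum. Write $M$ with rows $a=(a_1,a_2,a_3)$ and $b=(b_1,b_2,b_3)$, and let $\Delta=a\times b$, whose components are, up to sign, the three maximal minors generating $J$. The candidate resolution is the Hilbert--Burch complex
\[
0 \rightarrow R^{2} \xrightarrow{\;M^{\top}\;} R^{3} \xrightarrow{\;\Delta^{\top}\;} R \rightarrow R/J \rightarrow 0 .
\]
It is a complex because the composite $\Delta\cdot M^{\top}$ has entries $\Delta\cdot a$ and $\Delta\cdot b$. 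These vanish identically, since a cross product is orthogonal to both of its factors; equivalently, each is the Laplace expansion of a $3\times 3$ determinant with a repeated row. This holds over any commutative ring, with no hypothesis on the entries.

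For exactness, invoke Buchsbaum--Eisenbud: a finite complex of free modules $0\to F_2\xrightarrow{\phi_2}F_1\xrightarrow{\phi_1}F_0$ is acyclic if and only if two conditions hold. First, the ranks must add up: $\operatorname{rank}\phi_2+\operatorname{rank}\phi_1=\operatorname{rank}F_1$, and here $2+1=3$. Second, $\operatorname{grade} I_{r_i}(\phi_i)\ge i$ for each $i$.
\begin{itemize}
\item For $i=1$, the ideal of $1\times 1$ minors of $\phi_1$ is $J$ itself. Its grade equals its codimension because $R$ is a polynomial ring, hence Cohen--Macaulay, so the grade is $2\ge 1$.
\item For $i=2$, the ideal of $2\times 2$ minors of $\phi_2=M^{\top}$ is again $J$, so its grade is $2\ge 2$.
\end{itemize}
Both conditions therefore reduce to the single hypothesis that $\operatorname{codim} J=2$. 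The rank condition needs that hypothesis too: $\operatorname{rank}\phi_1=1$ requires $J\neq 0$, and $\operatorname{rank} M=2$ requires some $2\times 2$ minor to be nonzero. Both hold because $\operatorname{codim} J=2>0$ forces $J\neq 0$.

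Once the complex is acyclic, $\operatorname{pd}(R/J)\le 2$. Since $\operatorname{grade} J=2$ and grade is always bounded by projective dimension, $\operatorname{pd}(R/J)=2=\operatorname{grade} J$, which is exactly the definition of a perfect ideal of grade 2. Finally, $R/J$ is Cohen--Macaulay by the standard argument. Localize at any prime containing $J$; by Auslander--Buchsbaum, the depth of $(R/J)_{\mathfrak p}$ equals $\dim R_{\mathfrak p}$ minus its projective dimension, which is at most $2$. Because $\operatorname{ht} J_{\mathfrak p}\ge 2$, this depth is at least $\dim (R/J)_{\mathfrak p}$. Alternatively, one can cite Bruns--Vetter's theorem that perfection of the expected grade holds for maximal-minor (Eagon--Northcott) ideals.

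The only step with real content is the translation from codimension to grade. It rests on $R$ being Cohen--Macaulay, so that $\operatorname{grade}=\operatorname{height}$, and on the convention that "codimension" here means the height of $J$. That height is $\dim R-\dim R/J$ in the graded or affine domain setting used by the paper. Everything else is formal, and the whole proposition can be quoted as a single citation to Eagon--Northcott or to \cite{BrunsVetter1988}.
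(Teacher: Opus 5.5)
Your proof is correct, but it takes a different route from the paper's. The paper proves this proposition purely by citation: Bruns--Vetter's Theorem (2.7), the Eagon--Northcott result that the ideal of maximal minors of an arbitrary $m\times n$ matrix is perfect when it attains the maximal grade $n-m+1$.

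You instead prove the $2\times 3$ case directly. For $m=2$, $n=3$ the Eagon--Northcott complex is exactly the Hilbert--Burch complex you write down. You check that it is a complex via the orthogonality $\Delta\cdot a=\Delta\cdot b=0$. You then check acyclicity by Buchsbaum--Eisenbud, where the rank and grade conditions both reduce to the single codimension hypothesis.

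The citation buys brevity and full generality. Your route is not lighter in prerequisites, since the acyclicity criterion is a theorem of the same weight as the one cited. It does buy two things.

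\textbf{The grade translation is made explicit.} You spell out codimension $=$ height $=$ grade: $R$ is Cohen--Macaulay, and $\operatorname{ht}J=\dim R-\dim R/J$ for an affine domain. The paper's one-line proof leaves this step tacit.

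\textbf{The resolution itself is exhibited.} In the application the rows are $S(\overline{w}_{k-1})$ and $\overline{v}_k$, so the two syzygies are just $\overline{w}_k\cdot S(\overline{w}_{k-1})=0$ and $\overline{w}_k\cdot\overline{v}_k=0$. Every entry is homogeneous of positive degree, so your resolution is even minimal, with Betti numbers $1,3,2$. In this graded setting that recovers the three minimal generators. This is consistent with the paper's Remark on why the generator count is not obtained this way: its counterexample matrix has a unit entry, which is exactly what destroys minimality.
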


\begin{proof}[{Proof of \thmref{Proposition}{prop-detideal}}] Bruns and Vetter \cite{BrunsVetter1988}, Theorem (2.7), p. 13, gives exactly this for an \textbf{arbitrary} matrix, as needed since one row of our \(2 \times 3\) matrix (\(S\left( {\overline{w}}_{k - 1} \right)\)) is a fixed polynomial vector, not a row of independent indeterminates. (The construction originates with Eagon and Northcott \cite{EagonNorthcott1962} it is the more general form given by Bruns and Vetter that is applied here.)

\end{proof}

\begin{remark}{why the generator count above wasn't proven this way}{auto5} One might expect \thmref{Proposition}{prop-detideal} to hand over the generator count as well, for free, via Bruns-Vetter's Theorem (16.36), p. 217, the Theorem of Hilbert-Burch. It does not. That theorem as actually stated gives \(I = a \cdot I_{m}(f)\) for a nonzerodivisor \(a\), and a generator count is not among its conclusions.

Nor could it be. At expected codimension the minors of an \textbf{arbitrary} \(2 \times 3\) matrix can be perfect and Cohen-Macaulay while one minor vanishes identically, or is a scalar combination of the other two. A single example settles it: over \(\mathbb{F}\lbrack x,y\rbrack\) the matrix \(\begin{pmatrix}
x & y & 0 \\
0 & 0 & 1
\end{pmatrix}\) has minors \(0,x,y\) and height 2, exactly as expected, yet \(J = (x,y)\) is perfect and Cohen-Macaulay with only \textbf{two} generators.

This is why \thmref{Theorem}{thm-mingens} is proven directly from linear independence rather than cited from this section's machinery.

\end{remark}

\begin{theorem}{\(R_{k}/I_{k}\) is Cohen-Macaulay}{thm-cm} For every \(k \geq 2\): \(R_{k}/I_{k}\) is Cohen-Macaulay.

\end{theorem}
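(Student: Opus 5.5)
The plan is to combine the determinantal description of \thmref{Theorem}{thm-determinantal} with the codimension count of \thmref{Theorem}{thm-codim} and feed both into \thmref{Proposition}{prop-detideal}. No induction on Cohen-Macaulayness itself is needed. All the inductive work has already been done in \thmref{Theorem}{thm-codim}, and Cohen-Macaulayness follows at each fixed \(k\) in one step.

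First, fix \(k \geq 2\). Let \(M\) be the \(2 \times 3\) matrix over \(R_{k}\) whose rows are \(S({\overline{w}}_{k-1})\) and \({\overline{v}}_{k}\) (for \(k = 2\), the rows are \({\overline{v}}_{1}\) and \({\overline{v}}_{2}\)). By \thmref{Theorem}{thm-determinantal}, \(I_{k}\) is exactly the ideal \(J\) of \(2 \times 2\) minors of \(M\). One point deserves care: the theorem needs an equality of ideals, not merely of vanishing loci. This holds because the components of \(a \times b\) are, up to sign, the three minors of the matrix with rows \(a\) and \(b\), and changing the sign of a generator does not change the ideal.

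Second, check the hypothesis of \thmref{Proposition}{prop-detideal}, namely that \(J\) has codimension \(2\). \thmref{Theorem}{thm-codim} gives \(\dim R_{k}/I_{k} = 3k-2\), so \(\operatorname{ht} I_{k} = 3k - (3k-2) = 2\). Here I use that \(R_{k}\) is a polynomial ring over a field, so height equals \(\dim R_{k} - \dim R_{k}/I_{k}\) for every ideal (affine domains are catenary and equidimensional). The proposition is phrased in terms of grade, so I also note that in the Cohen-Macaulay ring \(R_{k}\) grade equals height. Thus \(\operatorname{grade} I_{k} = 2\), which is the maximal possible value \((3-2+1)(2-2+1) = 2\) for \(2\times 2\) minors of a \(2\times 3\) matrix.

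Third, invoke \thmref{Proposition}{prop-detideal} (Bruns--Vetter, Theorem 2.7). Since \(M\) is an arbitrary matrix whose minor ideal has the expected grade, \(I_{k}\) is perfect of grade \(2\): the Eagon--Northcott complex, which in this case is the Hilbert--Burch complex \(0 \to R_{k}^{2} \to R_{k}^{3} \to R_{k}\), resolves \(R_{k}/I_{k}\). Hence \(\operatorname{pd} R_{k}/I_{k} = 2 = \operatorname{grade} I_{k}\). A perfect quotient of a Cohen-Macaulay ring is Cohen-Macaulay, so \(R_{k}/I_{k}\) is Cohen-Macaulay.

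The main obstacle is the second step, specifically the passage from the dimension of the variety to the height or grade of the ideal. \thmref{Theorem}{thm-codim} was proved set-theoretically, so it must be read over \(\overline{\mathbb{F}}\) and transferred back to \(\mathbb{F}\). I would handle this by noting that Krull dimension is invariant under the faithfully flat extension \(R_{k} \to R_{k}\otimes_{\mathbb{F}}\overline{\mathbb{F}}\), and that the dimension of \(R_{k}/I_{k}\) is \(\dim V(I_{k})\) over \(\overline{\mathbb{F}}\) by the Nullstellensatz, since radicals do not affect dimension. As a fallback, the Cohen-Macaulay property itself descends along this flat extension with regular fibres, so it would also suffice to prove the result over \(\overline{\mathbb{F}}\).
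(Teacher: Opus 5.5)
Your proposal is correct and follows the paper's own proof. \thmref{Theorem}{thm-determinantal} presents \(I_k\) as the ideal of \(2\times 2\) minors of a \(2\times 3\) matrix, \thmref{Theorem}{thm-codim} supplies the expected codimension \(2\), and \thmref{Proposition}{prop-detideal} then gives perfection and hence Cohen--Macaulayness; your height-equals-grade and base-field remarks only make explicit what the paper leaves implicit (cf.\ \thmref{Remark}{rem-base-field}).

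One small correction to that extra care: faithful flatness alone does not preserve Krull dimension (consider \(A\to A[x]\)). What keeps \(\dim R_k/I_k\) unchanged under \(-\otimes_{\mathbb{F}}\overline{\mathbb{F}}\) is that the extension is integral, because \(\overline{\mathbb{F}}\) is algebraic over \(\mathbb{F}\).
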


\begin{proof}[{Proof of \thmref{Theorem}{thm-cm}}] By \thmref{Theorem}{thm-codim}, \(I_{k}\) has codimension 2, the expected value for the \(2 \times 3\) matrix of \thmref{Theorem}{thm-determinantal} presenting it. \thmref{Proposition}{prop-detideal} applies directly. (This uses \thmref{Theorem}{thm-codim} only -- not \thmref{Theorem}{thm-mingens}'s generator count, not \thmref{Theorem}{thm-componentcount}'s component count -- genuinely independent of both.)

\end{proof}

\section{Radicality}

Cohen-Macaulayness and the explicit component structure of \S{}5 combine, via Serre's criterion, to give radicality -- which then closes both the generator-count (\S{}4) / Cohen-Macaulay (\S{}6) story (making \(I_{k}\) itself, not just \(\sqrt{I_{k}}\), the object with those properties) and upgrades \S{}5's component count to a genuine primary decomposition.

\begin{lemma}{reduction via Serre}{lem-serre} Recall Serre's two conditions. They are written \(\left( R_{n} \right)\) and \(\left( S_{n} \right)\) throughout, always parenthesised, to keep them apart from this article's polynomial rings \(R_{k}\) --- the letters collide, the objects have nothing to do with each other, and \(R_{0}\) unparenthesised beside \(R_{k}\) invites exactly the wrong reading.

A Noetherian ring \(A\) satisfies \(\left( S_{n} \right)\) if \(\text{depth}\left( A_{P} \right) \geq \min(n,\dim A_{P})\) for every prime \(P \subset A\). It satisfies \(\left( R_{n} \right)\) if \(A_{P}\) is a \textbf{regular} local ring for every prime \(P\) of height \(\leq n\); in particular \(\left( R_{0} \right)\) asks for regularity at the minimal primes only, which for a reduced-or-not ring means that the local ring at each minimal prime is a field. Cohen-Macaulayness is the extremal case of the first: it asks for \(\text{depth}\left( A_{P} \right) = \dim A_{P}\) at \textbf{every} prime, which is \(n = \infty\) in effect.

A Cohen-Macaulay ring therefore satisfies \(\left( S_{n} \right)\) for every \(n\), since \(\text{depth}\left( A_{P} \right) = \dim A_{P} \geq \min(n,\dim A_{P})\) holds trivially whatever \(n\) is. This needs nothing beyond the two definitions just given, and it is standard for Serre's conditions in general: see Bruns and Vetter \cite{BrunsVetter1988}, or Eisenbud's \emph{Commutative Algebra with a View Toward Algebraic Geometry}. In particular \(R_{k}/I_{k}\) satisfies \(\left( S_{1} \right)\).

Serre's criterion for reducedness says that a Noetherian ring is reduced if and only if it satisfies both \(\left( R_{0} \right)\) and \(\left( S_{1} \right)\) \cite{Stacks031R}. Given \thmref{Theorem}{thm-cm}, then, radicality of \(I_{k}\) is equivalent to \(\left( R_{0} \right)\) alone: regularity of \(R_{k}/I_{k}\) at each of its minimal primes.

\end{lemma}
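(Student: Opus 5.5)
The claim to be proved has three parts. First, a Cohen-Macaulay ring satisfies \(\left( S_{n} \right)\) for every \(n\). Second, in particular \(R_{k}/I_{k}\) satisfies \(\left( S_{1} \right)\). Third, given \thmref{Theorem}{thm-cm}, radicality of \(I_{k}\) is equivalent to \(\left( R_{0} \right)\) for \(A \coloneqq R_{k}/I_{k}\). I will prove them in that order, each resting on the previous one and on results already in hand.

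\textbf{Step 1.} Let \(A\) be Cohen-Macaulay and \(P \subset A\) prime. Then \(\text{depth}(A_{P}) = \dim A_{P}\). I will use the standard fact that localization of a Cohen-Macaulay ring is Cohen-Macaulay; depending on the convention for ``Cohen-Macaulay ring'', this is either the definition itself or Bruns-Vetter's localization statement. Since \(\dim A_{P} \geq \min(n,\dim A_{P})\) for every \(n\), condition \(\left( S_{n} \right)\) follows at once, with no case analysis. \thmref{Theorem}{thm-cm} says \(A\) is Cohen-Macaulay, so specializing to \(n = 1\) gives \(\left( S_{1} \right)\).

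\textbf{Step 2.} I will invoke Serre's criterion as cited \cite{Stacks031R}: a Noetherian ring is reduced if and only if it satisfies \(\left( R_{0} \right)\) and \(\left( S_{1} \right)\). The ring \(A\) is Noetherian, being a quotient of a polynomial ring in \(3k\) variables (Hilbert basis theorem). Combined with Step 1, reducedness of \(A\) becomes equivalent to \(\left( R_{0} \right)\) alone. Finally, \(A = R_{k}/I_{k}\) is reduced exactly when \(I_{k} = \sqrt{I_{k}}\), by the definition of the nilradical. This yields the stated equivalence.

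\textbf{Main obstacle.} There is no real obstacle; the lemma is a bookkeeping reduction. The one point needing care is the localization step, namely that \(\text{depth}(A_{P}) = \dim A_{P}\) holds at every prime and not only at maximal ideals. I will dispatch this by citing the standard localization theorem rather than reproving it.

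I will also remark that \(\left( R_{0} \right)\) is a condition only at the minimal primes of \(I_{k}\), and these are the \(P_{2},\ldots,P_{k}\) of \thmref{Theorem}{thm-componentcount} and \thmref{Definition}{def-primecomponents}. This sets up the later work: at each minimal prime one shows that \((R_{k}/I_{k})_{P_{j}}\) is a field. Since \(I_{k}\) has codimension 2 (\thmref{Theorem}{thm-codim}), the Jacobian criterion reduces this to exhibiting a point on each component where the Jacobian of \({\overline{w}}_{k}\) has rank 2.
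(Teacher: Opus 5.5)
Your proposal is correct and follows the paper's own argument: Cohen--Macaulayness gives \(\text{depth}(A_P)=\dim A_P\ge\min(n,\dim A_P)\) at every prime, hence \((S_1)\), and Serre's criterion \cite{Stacks031R} then makes reducedness of \(R_k/I_k\), i.e.\ radicality of \(I_k\), equivalent to \((R_0)\) alone. Your extra care about localization is harmless; the paper takes ``depth equals dimension at every prime'' as its definition of Cohen--Macaulay, so that step is definitional there.
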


The four lemmas that follow are the standard Jacobian criterion, split into its two genuinely separate ingredients -- one linear-algebraic (\thmref{Lemma}{lem-cotangent}), one dimension-theoretic (\thmref{Lemma}{lem-localdim}) -- before either is used. The order is deliberate: the rank bound and the regularity criterion each draw on \textbf{both} ingredients and on neither each other, which a single pair of mutually-citing lemmas (as in an earlier draft of this section) obscures.

\begin{lemma}{the cotangent identity}{lem-cotangent} Let \(S = \mathbb{F}\left\lbrack x_{1},\ldots,x_{n} \right\rbrack\), \(I = \left( f_{1},\ldots,f_{m} \right)\), \(A = S/I\), and \(x \in V(I)\) an \(\mathbb{F}\)-rational point, with \(\mathfrak{n} \subset S\) the maximal ideal at \(x\) and \(\mathfrak{m} = \mathfrak{n}/I \subset A\) its image. Then \begin{equation}\fitdisplay{\dim_{\mathbb{F}}\mathfrak{m}/\mathfrak{m}^{2} = n - \text{ rank }\text{ Jac}(x).}\end{equation}

\end{lemma}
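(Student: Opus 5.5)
The plan is to translate to coordinates centred at \(x\) and read the statement as a computation of the cotangent space. After the affine substitution \(x_i \mapsto x_i + x_i(x)\), which is an \(\mathbb{F}\)-algebra automorphism of \(S\), we may assume \(x\) is the origin, so \(\mathfrak{n} = (x_1,\ldots,x_n)\). Each \(f_r\) has zero constant term, since \(x \in V(I)\), and we can write \(f_r = \ell_r + g_r\). Here \(\ell_r = \sum_i \frac{\partial f_r}{\partial x_i}(x)\,x_i\) is its linear part and \(g_r \in \mathfrak{n}^2\). This rests on the routine fact that the linear coefficients of a polynomial at a point are its first partial derivatives there, which holds in any characteristic because the Taylor expansion to first order needs no division.

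Next I identify \(\mathfrak{m}/\mathfrak{m}^2\). Since \(\mathfrak{m} = \mathfrak{n}/I\) and \(\mathfrak{m}^2 = (\mathfrak{n}^2 + I)/I\), the third isomorphism theorem gives
\begin{equation}
\mathfrak{m}/\mathfrak{m}^2 \cong \mathfrak{n}/(\mathfrak{n}^2 + I).
\end{equation}
The space \(\mathfrak{n}/\mathfrak{n}^2\) is the \(n\)-dimensional \(\mathbb{F}\)-vector space with basis the classes of \(x_1,\ldots,x_n\). The image of \(I\) in it is the \(\mathbb{F}\)-span of the classes of the \(f_r\), because a general element \(\sum_r h_r f_r\) of \(I\) reduces modulo \(\mathfrak{n}^2\) to \(\sum_r h_r(x)\,\ell_r\); any part of \(h_r\) lying in \(\mathfrak{n}\) multiplies \(f_r \in \mathfrak{n}\) into \(\mathfrak{n}^2\). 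Hence \((\mathfrak{n}^2 + I)/\mathfrak{n}^2\) is the span of \(\ell_1,\ldots,\ell_m\) inside \(\mathfrak{n}/\mathfrak{n}^2 \cong \mathbb{F}^n\).

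Written in the basis \(x_1,\ldots,x_n\), the coefficient vectors of the \(\ell_r\) are exactly the rows of \(\text{Jac}(x)\). So that span has dimension \(\text{rank Jac}(x)\), and therefore \(\dim_{\mathbb{F}} \mathfrak{n}/(\mathfrak{n}^2 + I) = n - \text{rank Jac}(x)\), which is the claim.

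The only step that needs care is the reduction of \(\sum_r h_r f_r\) modulo \(\mathfrak{n}^2\), since it depends on each \(f_r\) already lying in \(\mathfrak{n}\). That holds precisely because \(x\) is a point of \(V(I)\). The \(\mathbb{F}\)-rationality of \(x\) is what makes the recentring substitution defined over \(\mathbb{F}\) and makes \(S/\mathfrak{n} \cong \mathbb{F}\), so that all the dimensions above are taken over \(\mathbb{F}\) itself. Everything else is linear algebra.
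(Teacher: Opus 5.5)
Your proof is correct and is essentially the paper's argument. Both identify $\mathfrak{n}/\mathfrak{n}^2$ with $\mathbb{F}^n$ via linear parts at $x$, and both note that $\sum_r h_r f_r \equiv \sum_r h_r(x)\,\ell_r \pmod{\mathfrak{n}^2}$ because every $f_r$ vanishes at $x$, so the image of $I$ is the row space of $\mathrm{Jac}(x)$. Your recentring at the origin and the explicit isomorphism $\mathfrak{m}/\mathfrak{m}^2 \cong \mathfrak{n}/(\mathfrak{n}^2+I)$ only spell out steps the paper leaves implicit.
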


\begin{proof}[{Proof of \thmref{Lemma}{lem-cotangent}}] Only linear algebra is needed here; in particular, no hypothesis on \(A\) is required. Sending \(g\) to its gradient at \(x\) identifies the cotangent space \(\mathfrak{n}/\mathfrak{n}^{2}\) with \(\mathbb{F}^{n}\). Under that identification the image of \(I\) is the span of the linear parts at \(x\) of \(f_{1},\ldots,f_{m}\): a general element of \(I\) is \(\sum_{i}g_{i}f_{i}\), whose linear part at \(x\) is \(\sum_{i}g_{i}(x)\) times that of \(f_{i}\), since every \(f_{i}\) vanishes at \(x\). That span is the row space of \(\text{Jac}(x)\), of dimension \(\text{rank }\text{ Jac}(x)\). As \(\mathfrak{m}/\mathfrak{m}^{2} \cong \left( \mathfrak{n}/\mathfrak{n}^{2} \right)/\text{ im}(I)\), the claim follows.

\end{proof}

\begin{lemma}{the local dimension at a rational point}{lem-localdim} Let \(S = \mathbb{F}\left\lbrack x_{1},\ldots,x_{n} \right\rbrack\), \(I = \left( f_{1},\ldots,f_{m} \right)\), \(A = S/I\) equidimensional of dimension \(n - c\), and \(x \in V(I)\) an \(\mathbb{F}\)-rational point with maximal ideal \(\mathfrak{m} \subset A\). Write \(\text{Jac}(x) \in \mathbb{F}^{m \times n}\) for the Jacobian \(\left( \partial f_{i}/\partial x_{j} \right)\) \textbf{evaluated at \(x\)}: a matrix of scalars, not the matrix of polynomials it is evaluated from. (The polynomial Jacobian itself appears later as \(J_{k}\) in \thmref{Lemma}{lem-jacrecursion}; the two are not the same object, and only the evaluated one is used in this section.) Then \(\dim A_{\mathfrak{m}} = n - c\).

\end{lemma}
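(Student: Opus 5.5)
The plan is to reduce the statement to the corresponding fact for affine domains, one minimal prime at a time, and then invoke the classical dimension formula for finitely generated algebras over a field. Only the hypotheses of the statement are used, and no Jacobian information enters. The rank of \(\text{Jac}(x)\) appears in the statement only to fix notation for \thmref{Lemma}{lem-cotangent}.

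\textbf{Step 1: reduce to minimal primes.} The minimal primes of the Noetherian local ring \(A_{\mathfrak{m}}\) correspond to the minimal primes \(P\) of \(A\) with \(P \subseteq \mathfrak{m}\). There is at least one such \(P\), since \(\mathfrak{m}\) is a prime of \(A\) and every prime contains a minimal one. Hence
\begin{equation}\fitdisplay{\dim A_{\mathfrak{m}} = \max_{P \text{ minimal},\ P \subseteq \mathfrak{m}} \dim (A/P)_{\mathfrak{m}/P},}\end{equation}
because every chain of primes inside \(\mathfrak{m}\) starts at some minimal prime.

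\textbf{Step 2: the domain case.} Fix such a \(P\). Then \(B \coloneqq A/P\) is a finitely generated \(\mathbb{F}\)-algebra and a domain, and \(\bar{\mathfrak{m}} = \mathfrak{m}/P\) is a maximal ideal of \(B\), with \(B/\bar{\mathfrak{m}} \cong \mathbb{F}\) because \(x\) is rational. The standard dimension formula for affine domains (Noether normalization together with going-down for integral extensions, or equivalently catenarity and the fact that all maximal chains have equal length) gives
\begin{equation}\fitdisplay{\text{ht}(\bar{\mathfrak{m}}) + \dim B/\bar{\mathfrak{m}} = \dim B,}\end{equation}
so \(\dim B_{\bar{\mathfrak{m}}} = \text{ht}(\bar{\mathfrak{m}}) = \dim B\). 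In fact the formula holds for every maximal ideal, so rationality of \(x\) is convenient here but not essential.

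\textbf{Step 3: equidimensionality.} By hypothesis, every minimal prime \(P\) of \(A\) has \(\dim A/P = n - c\). Each term of the maximum in Step 1 therefore equals \(n - c\), and so \(\dim A_{\mathfrak{m}} = n - c\).

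The main obstacle is making sure the hypothesis ``equidimensional'' is available in the precise form Step 3 needs, namely that \emph{every} minimal prime of \(A\) has coheight \(n - c\), and not merely that \(\dim A = n - c\). Without it, a point lying only on a lower-dimensional component would give a smaller local dimension. In the application to \(A = R_k/I_k\) this is supplied by \thmref{Theorem}{thm-componentcount}, which shows each component has dimension \(3k - 2\). It is also supplied independently by \thmref{Theorem}{thm-cm}, since Cohen-Macaulay affine rings are unmixed. I would state this explicitly when the lemma is applied, so that it is clear which of the two is being used.

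The dimension formula in Step 2 is the one genuinely external input, and I would cite it rather than reprove it, for instance from Eisenbud, Corollary 13.4.
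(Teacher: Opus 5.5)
Your proposal is correct and is essentially the paper's own proof. The argument has three steps: compute \(\dim A_{\mathfrak{m}}\) as the maximum of \(\dim (A/P)_{\mathfrak{m}/P}\) over minimal primes \(P \subseteq \mathfrak{m}\); use that in an affine domain over a field every maximal ideal has height equal to the dimension (Eisenbud, Cor.~13.4, which the paper also cites); and finish with equidimensionality, read as every minimal prime having coheight \(n-c\). Your side remarks are accurate and consistent with how the paper states and later applies the lemma: rationality of \(x\) and the Jacobian play no role here, and equidimensionality must be taken in that per-minimal-prime sense.
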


\begin{proof}[{Proof of \thmref{Lemma}{lem-localdim}}] The local dimension is computed componentwise: \begin{equation}\fitdisplay{\dim A_{\mathfrak{m}} = \max\left\{ \dim(A/P)_{\mathfrak{m}}:P \in \text{ Min}(A),P \subseteq \mathfrak{m} \right\},}\end{equation} and each \(A/P = S/\widetilde{P}\) is an affine domain over a field, for which \(\dim\left( S/\widetilde{P} \right)_{\mathfrak{M}} = \dim S/\widetilde{P}\) at \textbf{every} maximal ideal \(\mathfrak{M}\) -- affine domains over a field are catenary and equicodimensional (Eisenbud, \emph{Commutative Algebra}, Cor. 13.4; equivalently the Stacks Project, tag 00OS \cite{Stacks00OS} -- a dimension formula for finitely generated field extensions, holding over an arbitrary field, not needing a perfect-field hypothesis by accident). By equidimensionality every such \(\dim S/\widetilde{P}\) equals \(n - c\).

\end{proof}

\begin{lemma}{Jacobian criterion: the rank bound}{lem-jacobian-rankbound} The hypotheses are those of \thmref{Lemma}{lem-localdim}. Then \(\text{rank }\text{ Jac}(x) \leq c\).

\end{lemma}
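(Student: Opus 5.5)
The plan is to combine the two ingredients exactly as the preceding paragraph anticipates: \thmref{Lemma}{lem-cotangent} turns the rank of \(\text{Jac}(x)\) into a statement about the embedding dimension \(\dim_{\mathbb{F}}\mathfrak{m}/\mathfrak{m}^{2}\), and \thmref{Lemma}{lem-localdim} pins down \(\dim A_{\mathfrak{m}} = n - c\). The inequality linking them is the standard one valid in every Noetherian local ring, \(\dim A_{\mathfrak{m}} \leq \dim_{\mathbb{F}}\mathfrak{m}A_{\mathfrak{m}}/\mathfrak{m}^{2}A_{\mathfrak{m}}\) (Krull's principal ideal theorem in its generalized form: \(\mathfrak{m}A_{\mathfrak{m}}\) is generated by \(\dim_{\mathbb{F}}\mathfrak{m}/\mathfrak{m}^{2}\) elements by Nakayama, and the height of an ideal is at most its number of generators).

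The steps, in order, are as follows. First, observe that localization does not change the cotangent space at a maximal ideal: \(\mathfrak{m}/\mathfrak{m}^{2} \cong \mathfrak{m}A_{\mathfrak{m}}/\mathfrak{m}^{2}A_{\mathfrak{m}}\), since \(\mathfrak{m}/\mathfrak{m}^{2}\) is already an \(A/\mathfrak{m}\)-module and elements outside \(\mathfrak{m}\) act invertibly on it. The residue field is \(\mathbb{F}\) because \(x\) is \(\mathbb{F}\)-rational. Second, apply Nakayama to conclude that \(\mathfrak{m}A_{\mathfrak{m}}\) is generated by \(e \coloneqq  \dim_{\mathbb{F}}\mathfrak{m}/\mathfrak{m}^{2}\) elements. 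Krull's height theorem then gives \(\dim A_{\mathfrak{m}} = \text{ht}(\mathfrak{m}A_{\mathfrak{m}}) \leq e\). Third, substitute the two lemmas. \thmref{Lemma}{lem-localdim} gives \(n - c = \dim A_{\mathfrak{m}}\), and \thmref{Lemma}{lem-cotangent} gives \(e = n - \text{rank }\text{Jac}(x)\). Together these yield \(n - c \leq n - \text{rank }\text{Jac}(x)\), that is, \(\text{rank }\text{Jac}(x) \leq c\).

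I do not expect a genuine obstacle here. The only point needing care is the first step, namely that the cotangent space computed in \(A\) (as in \thmref{Lemma}{lem-cotangent}) agrees with the one in the local ring \(A_{\mathfrak{m}}\), where Krull's theorem lives. I would dispatch this with the one-line module argument above. I would also remark that the equidimensionality hypothesis is used only through \thmref{Lemma}{lem-localdim}. Without it, the same argument still gives \(\text{rank }\text{Jac}(x) \leq n - \dim A_{\mathfrak{m}}\), which is the form that actually generalizes.
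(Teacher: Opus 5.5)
Your proposal is correct and follows the paper's proof: it uses the inequality \(\dim A_{\mathfrak{m}} \leq \dim_{\mathbb{F}}\mathfrak{m}/\mathfrak{m}^{2}\), valid in any Noetherian local ring, with \thmref{Lemma}{lem-cotangent} giving the right-hand side as \(n - \text{rank }\text{Jac}(x)\) and \thmref{Lemma}{lem-localdim} giving the left-hand side as \(n - c\), and then rearranges. The only difference is that you justify that inequality via Nakayama and Krull's height theorem, and you check that the cotangent space is unchanged under localization at \(\mathfrak{m}\); the paper takes both as standard.
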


\begin{proof}[{Proof of \thmref{Lemma}{lem-jacobian-rankbound}}] In any Noetherian local ring one has \(\dim_{\mathbb{F}}\mathfrak{m}/\mathfrak{m}^{2} \geq \dim A_{\mathfrak{m}}\). By \thmref{Lemma}{lem-cotangent} the left-hand side is \(n - \text{ rank }\text{ Jac}(x)\), and by \thmref{Lemma}{lem-localdim} the right-hand side is \(n - c\); rearranging gives \(\text{rank }\text{ Jac}(x) \leq c\).

\end{proof}

\begin{lemma}{Jacobian criterion: regularity at rank \(= c\)}{lem-jacobian-regularity} The hypotheses are those of \thmref{Lemma}{lem-localdim}. If \(\text{rank }\text{ Jac}(x) = c\), then \(A_{\mathfrak{m}}\) is regular of dimension \(n - c\); in particular \(x\) lies on a \textbf{unique} component of \(V(I)\), whose local ring there is a field.

\end{lemma}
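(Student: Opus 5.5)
The plan is to combine the two preceding lemmas and then read off the geometric consequences. By \thmref{Lemma}{lem-cotangent}, the hypothesis \(\text{rank }\text{Jac}(x) = c\) gives
\[
\dim_{\mathbb{F}}\mathfrak{m}/\mathfrak{m}^{2} = n - c.
\]
By \thmref{Lemma}{lem-localdim}, \(\dim A_{\mathfrak{m}} = n - c\). The local ring \(A_{\mathfrak{m}}\) is Noetherian. Its residue field is \(\mathbb{F}\), since \(x\) is \(\mathbb{F}\)-rational. Localizing does not change the cotangent space: \(\mathfrak{m}A_{\mathfrak{m}}/\mathfrak{m}^{2}A_{\mathfrak{m}} \cong \mathfrak{m}/\mathfrak{m}^{2}\), because \(\mathfrak{m}/\mathfrak{m}^2\) is already an \(A/\mathfrak{m}\)-module. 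So the embedding dimension of \(A_{\mathfrak{m}}\) equals its Krull dimension, and that is exactly the definition of a regular local ring, of dimension \(n - c\). I will state this isomorphism explicitly, since the two earlier lemmas are phrased for \(\mathfrak{m} \subset A\) rather than for the localization.

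For the ``in particular'' I will use the standard fact that a regular local ring is a domain, quoted from Bruns--Vetter or the Stacks Project. Its minimal primes are the minimal primes \(P\) of \(A\) contained in \(\mathfrak{m}\). A domain has exactly one minimal prime, namely \(0\), so exactly one minimal prime of \(A\) lies in \(\mathfrak{m}\). Geometrically, \(x\) lies on exactly one irreducible component of \(V(I)\); existence holds because \(\mathfrak{m}\) contains some minimal prime.

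For the last claim, let \(P\) be that minimal prime. The local ring of \(A\) at \(P\) is \((A_{\mathfrak{m}})_{PA_{\mathfrak{m}}}\), the localization of the domain \(A_{\mathfrak{m}}\) at its zero ideal. That is its fraction field, so \(A_P\) is a field. This is the form in which \(\left(R_{0}\right)\) is needed in \thmref{Lemma}{lem-serre}.

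The only delicate point is the regular-implies-domain input. I will cite it rather than reprove it; if a self-contained route were wanted, I would use that the associated graded ring of a regular local ring is a polynomial ring, hence a domain. Everything else is bookkeeping between the two earlier lemmas. I also need to check that the equidimensionality hypothesis of \thmref{Lemma}{lem-localdim} is really what pins \(\dim A_{\mathfrak{m}}\) at \(n-c\) rather than just bounding it; it is, since every component through \(x\) has dimension \(n-c\).
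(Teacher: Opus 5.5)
Your proof is correct and follows the paper's own argument: \thmref{Lemma}{lem-cotangent} and \thmref{Lemma}{lem-localdim} give embedding dimension equal to Krull dimension $n-c$. The fact that a regular local ring is a domain then yields the unique minimal prime inside $\mathfrak{m}$, and shows that the local ring $A_P$ there is a field. Your explicit isomorphism $\mathfrak{m}A_{\mathfrak{m}}/\mathfrak{m}^2A_{\mathfrak{m}}\cong\mathfrak{m}/\mathfrak{m}^2$ and your reading of $A_P$ as the fraction field of the domain $A_{\mathfrak{m}}$ spell out two steps the paper leaves compressed.
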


\begin{proof}[{Proof of \thmref{Lemma}{lem-jacobian-regularity}}] If \(\text{rank }\text{ Jac}(x) = c\) then \(\dim_{\mathbb{F}}\mathfrak{m}/\mathfrak{m}^{2} = n - c\) by \thmref{Lemma}{lem-cotangent}, while \(\dim A_{\mathfrak{m}} = n - c\) by \thmref{Lemma}{lem-localdim}; equality of the two is the definition of regularity. A regular local ring is a domain, so \(\mathfrak{m}\) contains a unique minimal prime of \(A\), and the local ring at that prime is a further localization of a regular local ring at its own maximal ideal in dimension 0, hence a field.

\end{proof}

\begin{lemma}{the Jacobian's own recursion}{lem-jacrecursion} Write \(J_{k}\) for the \(3 \times 3k\) Jacobian of \({\overline{w}}_{k}\), \(\lbrack u\rbrack_{\times}\) for the skew matrix with \(\lbrack u\rbrack_{\times}v = u \times v\). Then \(J_{2} = \left\lbrack - \left\lbrack {\overline{v}}_{2} \right\rbrack_{\times}|\left\lbrack {\overline{v}}_{1} \right\rbrack_{\times} \right\rbrack\), and for \(k \geq 3\), \begin{equation}\fitdisplay{J_{k} = \left\lbrack - \left\lbrack {\overline{v}}_{k} \right\rbrack_{\times}SJ_{k - 1}|\left\lbrack {S\left( {\overline{w}}_{k - 1} \right)} \right\rbrack_{\times} \right\rbrack.}\end{equation}

\end{lemma}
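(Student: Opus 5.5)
The plan is a direct differentiation of the recursion in \thmref{Definition}{def-wk}, using the product rule for the cross product and the two elementary identities \(a \times b = [a]_{\times} b\) and \(a \times b = -\,b \times a = -[b]_{\times} a\). Throughout I order the variables block by block, \(({\overline{v}}_{1},\ldots,{\overline{v}}_{k})\), so \(J_{k}\) splits as a \(3 \times 3(k-1)\) block (derivatives in the old variables) followed by a \(3 \times 3\) block (derivatives in \({\overline{v}}_{k}\)).

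For the base case \(k = 2\), I write \({\overline{w}}_{2} = {\overline{v}}_{1} \times {\overline{v}}_{2}\). As a function of \({\overline{v}}_{2}\) it is \([{\overline{v}}_{1}]_{\times}{\overline{v}}_{2}\), which is linear in \({\overline{v}}_{2}\), so its Jacobian there is \([{\overline{v}}_{1}]_{\times}\). As a function of \({\overline{v}}_{1}\) it is \(-[{\overline{v}}_{2}]_{\times}{\overline{v}}_{1}\), so the Jacobian there is \(-[{\overline{v}}_{2}]_{\times}\). Concatenating the two blocks gives \(J_{2} = \left[ -[{\overline{v}}_{2}]_{\times} \,|\, [{\overline{v}}_{1}]_{\times} \right]\). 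To be safe I would check one entry against the explicit components displayed in \thmref{Example}{ex-wk-signed}.

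For \(k \geq 3\), I write \({\overline{w}}_{k} = a \times {\overline{v}}_{k}\) with \(a \coloneqq S({\overline{w}}_{k-1})\). Here \(a\) depends only on \({\overline{v}}_{1},\ldots,{\overline{v}}_{k-1}\), since the block \({\overline{v}}_{k}\) is fresh.

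\textbf{Derivative in \({\overline{v}}_{k}\).} With the old variables held fixed, \({\overline{w}}_{k} = [a]_{\times}{\overline{v}}_{k}\), which is linear. This gives the last block \([S({\overline{w}}_{k-1})]_{\times}\).

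\textbf{Derivative in the old variables.} Now \({\overline{v}}_{k}\) is constant and \({\overline{w}}_{k} = -[{\overline{v}}_{k}]_{\times} a\), with \([{\overline{v}}_{k}]_{\times}\) a constant matrix. By the chain rule the Jacobian is \(-[{\overline{v}}_{k}]_{\times}\) times the Jacobian of \(a\). Since \(S\) is a constant linear map, the Jacobian of \(a = S{\overline{w}}_{k-1}\) is \(S J_{k-1}\), where \(J_{k-1}\) is taken in the first \(3(k-1)\) variables. This gives the first block \(-[{\overline{v}}_{k}]_{\times} S J_{k-1}\).

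Concatenating the two blocks yields the stated recursion.

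There is no real obstacle; the only care needed is sign and convention bookkeeping. First, \(u \times v = -v \times u\) produces the minus sign on the old-variable block. Second, \(S\) must sit between \([{\overline{v}}_{k}]_{\times}\) and \(J_{k-1}\) and not outside, because the recursion applies \(S\) to \({\overline{w}}_{k-1}\) before the cross product. Third, \(J_{k-1}\) is regarded as a function of the first \(k-1\) blocks only, so the block structure matches. A symbolic check at \(k = 3\) against the displayed cubic generators would confirm both signs.
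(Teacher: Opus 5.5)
Your proof is correct and is essentially the paper's argument. The \(\overline{v}_k\)-block comes from \(\overline{w}_k = [S(\overline{w}_{k-1})]_{\times}\overline{v}_k\) being linear in the fresh block, and the old-variable block comes from \(a \times b = -[b]_{\times}a\) together with \(S\) being constant. The paper differentiates in one variable \(t\) at a time and then assembles columns, which is the same computation as your matrix chain rule; you additionally derive the base case \(J_2\) explicitly, which the paper states without separate justification.
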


\begin{proof}[{Proof of \thmref{Lemma}{lem-jacrecursion}}] For the second block, \({\overline{w}}_{k} = {S\left( {\overline{w}}_{k - 1} \right)} \times {\overline{v}}_{k} = \left\lbrack {S\left( {\overline{w}}_{k - 1} \right)} \right\rbrack_{\times}{\overline{v}}_{k}\) is linear in \({\overline{v}}_{k}\) with coefficient matrix free of \({\overline{v}}_{k}\). For the first, differentiating \({\overline{w}}_{k} = {S\left( {\overline{w}}_{k - 1} \right)} \times {\overline{v}}_{k}\) with respect to any variable \(t\) among \({\overline{v}}_{1},\ldots,{\overline{v}}_{k - 1}\) gives \(\partial_{t}{\overline{w}}_{k} = \left( S\partial_{t}{\overline{w}}_{k - 1} \right) \times {\overline{v}}_{k} = - \left\lbrack {\overline{v}}_{k} \right\rbrack_{\times}S\partial_{t}{\overline{w}}_{k - 1}\), using \(a \times b = - \lbrack b\rbrack_{\times}a\); assembling columns gives the stated product.

\end{proof}

\begin{example}{\(J_{3}\), explicitly}{auto6} With variables ordered \(v_{1,1},v_{1,2},v_{1,3},v_{2,1},v_{2,2},v_{2,3},v_{3,1},v_{3,2},v_{3,3}\), the \(3 \times 9\) Jacobian of \({\overline{w}}_{3}\) (the three generators of \(I_{3}\) from the earlier example), column-blocked to match \thmref{Lemma}{lem-jacrecursion}'s formula \(J_{3} = \left\lbrack - \left\lbrack {\overline{v}}_{3} \right\rbrack_{\times}SJ_{2}|\left\lbrack {S\left( {\overline{w}}_{2} \right)} \right\rbrack_{\times} \right\rbrack\) -- shown here as the two blocks separately, since side by side the full \(3 \times 9\) matrix does not fit the page width. First, \(- \left\lbrack {\overline{v}}_{3} \right\rbrack_{\times}SJ_{2}\) (\(3 \times 6\)): \begin{equation}\fitdisplay{\begin{pmatrix}
 - v_{2,2}v_{3,2} + v_{2,3}v_{3,3} & v_{2,1}v_{3,2} & - v_{2,1}v_{3,3} & v_{1,2}v_{3,2} - v_{1,3}v_{3,3} & - v_{1,1}v_{3,2} & v_{1,1}v_{3,3} \\
v_{2,2}v_{3,1} & - v_{2,1}v_{3,1} + v_{2,3}v_{3,3} & - v_{2,2}v_{3,3} & - v_{1,2}v_{3,1} & v_{1,1}v_{3,1} - v_{1,3}v_{3,3} & v_{1,2}v_{3,3} \\
 - v_{2,3}v_{3,1} & - v_{2,3}v_{3,2} & v_{2,1}v_{3,1} + v_{2,2}v_{3,2} & v_{1,3}v_{3,1} & v_{1,3}v_{3,2} & - v_{1,1}v_{3,1} - v_{1,2}v_{3,2}
\end{pmatrix}}\end{equation} and then \(\left\lbrack {S\left( {\overline{w}}_{2} \right)} \right\rbrack_{\times}\) (\(3 \times 3\), the skew matrix of \(S\left( {\overline{w}}_{2} \right)\)'s three coordinates from the earlier matrix example): \begin{equation}\fitdisplay{\begin{pmatrix}
0 & v_{1,2}v_{2,1} - v_{1,1}v_{2,2} & - v_{1,3}v_{2,1} + v_{1,1}v_{2,3} \\
 - v_{1,2}v_{2,1} + v_{1,1}v_{2,2} & 0 & - v_{1,3}v_{2,2} + v_{1,2}v_{2,3} \\
v_{1,3}v_{2,1} - v_{1,1}v_{2,3} & v_{1,3}v_{2,2} - v_{1,2}v_{2,3} & 0
\end{pmatrix}}\end{equation} Both blocks computed directly, not merely asserted from the recursive formula.

\end{example}

\begin{remark}{a false shortcut, caught and corrected}{auto7} A naive reading of \thmref{Lemma}{lem-jacrecursion} suggests appending \({\overline{v}}_{k} = 0\) preserves rank on an inherited component -- this is false: the old-variable block is left-multiplied by \(- \left\lbrack {\overline{v}}_{k} \right\rbrack_{\times}S\), not merely padded with zero rows. At the first inherited case (\(k = 2 \rightarrow 3\)), appending \({\overline{v}}_{3} = 0\) actually drops the rank to 0. The correct witness (\thmref{Lemma}{lem-witness-inherited} below) instead picks \({\overline{v}}_{k} = e_{i}\) for a standard basis vector \textbf{not} lying in \(S \cdot \text{ im }J_{k - 1}(q)\) -- a genuine choice, not a formality.

\end{remark}

\begin{lemma}{rank-2 witness on the new component}{lem-witness-new} For every \(k \geq 3\), there is a point \(x = (p,0)\), \(p\) the \thmref{Lemma}{lem-nondeg} non-degeneracy witness at level \(k - 1\), with \(x \in V\left( I_{k} \right)\) and \(\text{rank }J_{k(x)} = 2\). At \(k = 2\) the same conclusion holds with \(x = (p,0)\) for any \(p \neq 0\).

\end{lemma}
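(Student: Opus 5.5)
The plan is to evaluate the recursion of \thmref{Lemma}{lem-jacrecursion} at \(x = (p,0)\), where \({\overline{v}}_{k} = 0\).

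\textbf{Step 1: the point lies on the locus.} Membership \(x \in V\left( I_{k} \right)\) is immediate: \({\overline{w}}_{k}(x) = {S\left( {\overline{w}}_{k - 1}(p) \right)} \times 0 = 0\).

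\textbf{Step 2: the Jacobian at \(x\).} The first column block of \(J_{k}(x)\) is \(- \lbrack 0\rbrack_{\times}SJ_{k - 1}(p) = 0\). So only the last block survives, and
\[
J_{k}(x) = \left\lbrack 0 \,\middle|\, \left\lbrack {S(c)} \right\rbrack_{\times} \right\rbrack, \qquad c \coloneqq {\overline{w}}_{k - 1}(p).
\]
Hence \(\text{rank }J_{k}(x) = \text{rank}\lbrack S(c)\rbrack_{\times}\).

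\textbf{Step 3: the rank of the skew block.} By \thmref{Lemma}{lem-nondeg}, \(c \neq 0\), and \(S\) is invertible, so \(u \coloneqq S(c) \neq 0\). It remains to show \(\text{rank}\lbrack u\rbrack_{\times} = 2\) for \(u \neq 0\), over any field. The kernel of \(\lbrack u\rbrack_{\times}\) contains \(u\). It is exactly \(\mathbb{F}u\): if \(u \times v = 0\), then all \(2 \times 2\) minors of the matrix with rows \(u\) and \(v\) vanish, so \(v\) is proportional to \(u\). Thus the rank is \(3 - 1 = 2\).

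Alternatively, one can read the rank off directly. Some \(u_{i} \neq 0\), and each nonzero \(u_{i}\) appears as an entry in a \(2 \times 2\) minor of \(\lbrack u\rbrack_{\times}\) equal to \(u_{i}^{2}\) up to sign. Meanwhile \(\det\lbrack u\rbrack_{\times} = 0\) because the matrix is skew of odd size; this uses \(\text{char} \neq 2\), or simply the kernel argument above. For the explicit witness \(p = p^{(k - 1)}\), one has \(u = \pm e_{2}\) or \(e_{3}\), and the rank is visibly 2.

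\textbf{Step 4: the case \(k = 2\).} Here \(J_{2}(p,0) = \left\lbrack 0 \,\middle|\, \lbrack p\rbrack_{\times} \right\rbrack\) since \({\overline{v}}_{2} = 0\). By the same kernel computation this has rank 2 for \(p \neq 0\).

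\textbf{Main obstacle.} The only real content is the kernel statement \(\ker\lbrack u\rbrack_{\times} = \mathbb{F}u\) over an arbitrary field, which I will prove by the minors argument in Step 3. \thmref{Lemma}{lem-nondeg} refers to this computation, so it should be written out here explicitly.
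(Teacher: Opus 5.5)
Your proposal is correct and follows the paper's proof essentially step for step. You evaluate the Jacobian recursion at \((p,0)\), so the old-variable block \(-[0]_\times S J_{k-1}(p)\) vanishes and only \([S(\overline{w}_{k-1}(p))]_\times\) remains; its argument is nonzero by non-degeneracy and the invertibility of \(S\), it has rank 2 because \(\ker[u]_\times=\mathbb{F}u\), and the case \(k=2\) is handled separately in the same way. Your minors argument for that kernel spells out what the paper only asserts, and you do not need characteristic \(\neq 2\) for \(\det[u]_\times=0\), since \(u\in\ker[u]_\times\) (or direct expansion) gives it over any field.
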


\begin{proof}[{Proof of \thmref{Lemma}{lem-witness-new}}] \textbf{The case \(k = 2\) separately}, because the statement above cannot be read at \(k = 2\): it would refer to a non-degeneracy witness ``at level 1'', and neither \({\overline{w}}_{1}\) nor \thmref{Lemma}{lem-nondeg} exists there --- \thmref{Definition}{def-wk} starts the recursion at \(j = 2\). Nor does \thmref{Lemma}{lem-jacrecursion}'s \(k \geq 3\) formula apply. Argue directly instead: \(J_{2} = \left\lbrack - \left\lbrack {\overline{v}}_{2} \right\rbrack_{\times}|\left\lbrack {\overline{v}}_{1} \right\rbrack_{\times} \right\rbrack\) carries no \(S\) at all, so for any \(p \neq 0\), \(J_{2}(p,0) = \left\lbrack 0|\lbrack p\rbrack_{\times} \right\rbrack\), which has rank 2 by the kernel computation below. And \((p,0) \in V\left( I_{2} \right)\) since \({\overline{w}}_{2}(p,0) = p \times 0 = 0\).

\textbf{For \(k \geq 3.\)} By \thmref{Lemma}{lem-jacrecursion}, \(J_{k(p,0)} = \left\lbrack 0|\left\lbrack {S\left( {\overline{w}}_{k - 1}(p) \right)} \right\rbrack_{\times} \right\rbrack\) since \(- \lbrack 0\rbrack_{\times}SJ_{k - 1}(p) = 0\); and \({S\left( {\overline{w}}_{k - 1}(p) \right)} \neq 0\) since \({\overline{w}}_{k - 1}(p) \neq 0\) (\thmref{Lemma}{lem-nondeg}) and \(S\) is invertible. For \(u \neq 0\), \(\ker\lbrack u\rbrack_{\times} = \left\{ v:u \times v = 0 \right\} = \mathbb{F}u\) (a cross product vanishes exactly on linearly dependent pairs), so \(\left\lbrack {S\left( {\overline{w}}_{k - 1}(p) \right)} \right\rbrack_{\times}\) has rank 2, giving \(\text{rank }J_{k(p,0)} = 2\).

\end{proof}

\begin{lemma}{rank-2 witness on an inherited component}{lem-witness-inherited} For every \(k \geq 3\) and every component \(Z\) of \(V\left( I_{k - 1} \right)\) with a rank-2 witness \(q \in Z\) (\thmref{Lemma}{lem-witness-new}, or this lemma applied at \(k - 1\)), there is \(x = \left( q,e_{i} \right) \in Z \times \mathbb{A}^{3} \subset V\left( I_{k} \right)\) with \(\text{rank }J_{k(x)} = 2\).

\end{lemma}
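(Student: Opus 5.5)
The plan is to evaluate the Jacobian recursion of \thmref{Lemma}{lem-jacrecursion} at $x=(q,e_i)$ and choose $i$ well. Since $q\in Z\subset V(I_{k-1})$, we have $\overline{w}_{k-1}(q)=0$. So the right block $[S(\overline{w}_{k-1}(q))]_\times$ vanishes, and
\[
J_k(q,e_i)=\bigl[\,-[e_i]_\times S J_{k-1}(q)\;\big|\;0\,\bigr].
\]
Membership is immediate, $x\in Z\times\mathbb{A}^3\subset Y_1\subset V(I_k)$, because $\overline{w}_k(x)=S(0)\times e_i=0$. Everything therefore reduces to showing that $\operatorname{rank}\bigl([e_i]_\times S J_{k-1}(q)\bigr)=2$ for a suitable $i$.

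Write $W\coloneqq S\cdot\operatorname{im}J_{k-1}(q)\subseteq\mathbb{F}^3$. Since $S$ is invertible, $\dim W=\operatorname{rank}J_{k-1}(q)=2$ by hypothesis, so $W$ is a plane. The rank of $[e_i]_\times$ restricted to $W$ is $\dim W-\dim(W\cap\ker[e_i]_\times)$. For $e_i\neq0$ the kernel is $\mathbb{F}e_i$, as in the proof of \thmref{Lemma}{lem-witness-new}. Hence the rank is $2$ exactly when $e_i\notin W$.

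It remains to find such an $i$, and this is a counting argument in the spirit of \thmref{Lemma}{lem-nondeg}. A plane cannot contain all of $e_1,e_2,e_3$, since these span $\mathbb{F}^3$. So some $e_i\notin W$, and the rank is $2$ at $x=(q,e_i)$. This is exactly the correction flagged in the remark preceding \thmref{Lemma}{lem-witness-new}: the choice $\overline{v}_k=0$ fails because $[0]_\times=0$, while any $e_i$ outside $W$ works.

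The main obstacle, if any, is bookkeeping rather than mathematics. First, the rank-$2$ witness must be carried for the same component $Z$ across levels, so that the induction (base: \thmref{Lemma}{lem-witness-new}; step: this lemma) produces a witness on every component $Z\times\mathbb{A}^3$ of $Y_1$. Second, one should check that the column-block order in \thmref{Lemma}{lem-jacrecursion} is as stated, so that the zero block really sits in the $\overline{v}_k$-columns. Field generality is harmless: all points have coordinates in $\{0,\pm1\}$, and the linear algebra is over $\mathbb{F}$.
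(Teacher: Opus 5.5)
Your proposal is correct and is essentially the paper's own proof. You evaluate \thmref{Lemma}{lem-jacrecursion} at $(q,e_i)$ so that the right block $[S(\overline{w}_{k-1}(q))]_\times$ vanishes, take the plane $W=S\cdot\operatorname{im}J_{k-1}(q)$, pick $e_i\notin W$ because the standard basis spans $\mathbb{F}^3$, and conclude from $\ker[e_i]_\times=\mathbb{F}e_i$ that $[e_i]_\times$ is injective on $W$; your explicit check $\overline{w}_k(x)=S(0)\times e_i=0$ just spells out the inclusion $Z\times\mathbb{A}^3\subset V(I_k)$ that the paper leaves implicit.
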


\begin{proof}[{Proof of \thmref{Lemma}{lem-witness-inherited}}] Let \(W \coloneqq  S \cdot \text{ im }J_{k - 1}(q)\), a 2-dimensional subspace of \(\mathbb{F}^{3}\) since \(S\) is invertible and \(\text{rank }J_{k - 1}(q) = 2\). Since \(e_{1},e_{2},e_{3}\) span \(\mathbb{F}^{3}\) and \(W \neq \mathbb{F}^{3}\), some \(e_{i} \notin W\); fix such an \(i\). Since \(q \in V\left( I_{k - 1} \right)\), \({\overline{w}}_{k - 1}(q) = 0\), so \thmref{Lemma}{lem-jacrecursion} gives \(J_{k\left( q,e_{i} \right)} = \left\lbrack - \left\lbrack e_{i} \right\rbrack_{\times}SJ_{k - 1}(q)|0 \right\rbrack\), of rank \(\dim(\left\lbrack e_{i} \right\rbrack_{\times}W)\) since \(SJ_{k - 1}(q)\) has image exactly \(W\). As \(\ker\left\lbrack e_{i} \right\rbrack_{\times} = \mathbb{F}e_{i}\) and \(W \cap \mathbb{F}e_{i} = 0\) (else \(e_{i} \in W\)), \(\left\lbrack e_{i} \right\rbrack_{\times}\) is injective on \(W\), so \(\dim(\left\lbrack e_{i} \right\rbrack_{\times}W) = 2\).

\end{proof}

\begin{theorem}{\(I_{k}\) is radical}{thm-radical} For every \(k \geq 2\): \(I_{k}\) is radical.

\end{theorem}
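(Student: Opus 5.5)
The proof combines Serre's criterion (\thmref{Lemma}{lem-serre}) with the rank-2 witnesses of \thmref{Lemma}{lem-witness-new} and \thmref{Lemma}{lem-witness-inherited}. By \thmref{Theorem}{thm-cm}, $R_k/I_k$ is Cohen-Macaulay. Hence it satisfies $(S_1)$, and radicality of $I_k$ reduces to $(R_0)$: the localization of $A \coloneqq R_k/I_k$ at each minimal prime must be a field.

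Regularity localizes, so it suffices to find, for each minimal prime, an $\mathbb{F}$-rational point $x$ on the corresponding component at which $A_{\mathfrak m}$ is regular. \thmref{Lemma}{lem-jacobian-regularity} then says the local ring at the unique minimal prime inside $\mathfrak m$ is a field. Because that prime is unique, it is exactly the prime of the component containing $x$.

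The hypotheses of \thmref{Lemma}{lem-localdim} hold with $n = 3k$ and $c = 2$. Equidimensionality of $A$ of dimension $3k-2$ follows either from \thmref{Theorem}{thm-componentcount} (all $k-1$ components have dimension $3k-2$) or directly from Cohen-Macaulayness, which makes $A$ unmixed. So a point $x$ with $\operatorname{rank}\operatorname{Jac}(x) = 2 = c$ is what is needed.

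The witnesses are built by induction on $k$, matching the recursive description of the components $P_2,\dots,P_k$ (\thmref{Definition}{def-primecomponents}).
\begin{itemize}
\item \textbf{New component $P_k$.} \thmref{Lemma}{lem-witness-new} gives $x=(p,0)$ with rank $2$. Here $p$ is the explicit point of \thmref{Lemma}{lem-nondeg}, with all coordinates in $\{0,\pm1\}$, so $x$ is $\mathbb{F}$-rational. Also $x = \Phi(p,0)$ lies on $Y_2$, the component indexed $k$.
\item \textbf{Inherited components $P_j$, $j<k$.} These are $Z\times\mathbb{A}^3$ for components $Z$ of $V(I_{k-1})$. Starting from the level-$(k-1)$ witness $q$ on $Z$, \thmref{Lemma}{lem-witness-inherited} gives $x=(q,e_i)\in Z\times\mathbb{A}^3$ of rank $2$.
\end{itemize}
Iterating this produces a rank-$2$ rational witness on each of the $k-1$ components. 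The base case $k=2$ is the single component, handled by \thmref{Lemma}{lem-witness-new} at $k=2$.

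One delicate point needs an explicit argument: a witness must lie on the intended component and on no other. Having rank $2$ forces $x$ onto a unique component, so only membership has to be checked. For $(p,0)$ this is $\Phi(p,0)\in Y_2$. For $(q,e_i)$, $q$ lies on $Z$ by the inductive hypothesis. Since the components are pairwise distinct, the $k-1$ witnesses then cover all $k-1$ minimal primes.

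The main obstacle is the base field. \thmref{Lemma}{lem-cotangent} and \thmref{Lemma}{lem-localdim} need rational points, and over a non-closed $\mathbb{F}$ the minimal primes of $A$ must be identified with the $P_j$. I would first pass to $\overline{\mathbb{F}}$, where the component count and parametrizations are available (\thmref{Proposition}{prop-rational}), and prove that $I_k\otimes\overline{\mathbb{F}}$ is radical there. Then I would descend: $R_k/I_k\to (R_k/I_k)\otimes_{\mathbb F}\overline{\mathbb F}$ is injective by faithful flatness, so a nilpotent downstairs stays nilpotent upstairs, hence is zero.

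Alternatively, one can work directly over $\mathbb{F}$. The witnesses are defined over the prime field and lie on $V(P_j)$, because $P_j=\ker\varphi_j$ and the points are images of $\Psi_j$. This shows $A_{P_j}$ is a field once the $P_j$ are known to be all the minimal primes. I would take the descent route as primary, since it avoids re-deriving the minimal primes over $\mathbb{F}$.
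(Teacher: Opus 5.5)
Your proposal is correct and follows the paper's proof. Cohen--Macaulayness (\thmref{Theorem}{thm-cm}) gives $(S_1)$ via \thmref{Lemma}{lem-serre}, and the inductively built rank-$2$ witnesses of \thmref{Lemma}{lem-witness-new} and \thmref{Lemma}{lem-witness-inherited} give $(R_0)$ on each of the $k-1$ components through \thmref{Lemma}{lem-jacobian-regularity}; your descent of reducedness along $\mathbb{F}\to\overline{\mathbb{F}}$ makes explicit what the paper leaves to \thmref{Remark}{rem-base-field}, and one small caveat is that Cohen--Macaulayness yields equidimensionality here only because $I_k$ is homogeneous, so \thmref{Theorem}{thm-componentcount} is the cleaner justification.
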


\begin{proof}[{Proof of \thmref{Theorem}{thm-radical}}] By induction on \(k\) (mirroring \thmref{Theorem}{thm-componentcount}'s own induction), \thmref{Lemma}{lem-witness-new} and \thmref{Lemma}{lem-witness-inherited} exhibit a rank-2-Jacobian point on \textbf{every} one of the \(k - 1\) components of \thmref{Theorem}{thm-componentcount}. \thmref{Lemma}{lem-jacobian-regularity} then gives \(\left( R_{0} \right)\) at every one (and, by its ``unique component'' clause, confirms each witness lies only on its intended component -- an independent check on \thmref{Theorem}{thm-componentcount}'s count, not merely assumed); \thmref{Lemma}{lem-serre} gives \(\left( S_{1} \right)\), using \thmref{Theorem}{thm-cm}'s Cohen-Macaulayness. Serre's criterion \cite{Stacks031R} gives radicality.

\end{proof}

This has been verified independently in Sage, for the signed encoding, at every \(k = 2,\ldots,6\): the Jacobian recursion (\thmref{Lemma}{lem-jacrecursion}) checked as a polynomial identity, not just at points, and every witness confirmed to lie on \(V\left( I_{k} \right)\) with \(\text{rank }J_{k} = 2\) (anc/code/referee\_radicality\_S\_check.sage). The sharper claim --- that each witness lies on \textbf{exactly} its intended component --- is confirmed by an independently-computed primary decomposition at every \(k = 3,4,5,6\), which returns \(k - 1\) components in each case, in agreement with \thmref{Theorem}{thm-componentcount}; the \(k = 6\) decomposition takes some hours.

\section{The Primary Decomposition Theorem}

Every ingredient is now in hand for the fully general statement -- component count (\S{}5) and radicality (\S{}7) together, neither sufficient alone. One convention has to be made explicit first, because it is the hinge on which the statement's claimed generality turns.

\begin{remark}{the geometric arguments run over \(\overline{\mathbb{F}}\), and the conclusion descends}{rem-base-field} Sections 5-7 argue geometrically: they speak of \(V\left( I_{k} \right)\), of its irreducible components, and of the closure of the image of a parametrization, and at the decisive step they identify \(\text{Min}\left( I_{k} \right)\) with the vanishing ideals of those components. That identification is a Nullstellensatz statement and is \textbf{false over a general field} -- over \(\mathbb{F}_{3}\), for instance, \(V\left( I_{k} \right)\left( \mathbb{F}_{3} \right)\) is a finite set of points and carries no such information at all. So throughout \S{}\S{}5-7, and in the theorem below, \(V( \cdot )\) means the vanishing locus in \(\mathbb{A}^{3k}\left( \overline{\mathbb{F}} \right)\) over a fixed algebraic closure, and ``component'' means an irreducible component there. This is the usual convention and is what the classical literature on the commuting variety assumes as well (Artin-Hochster is stated over an algebraically closed field in \cite{MajidiZolbaninSnapp}).

The conclusion nonetheless descends to \(\mathbb{F}\) itself, so the theorem may be read over any field of characteristic \(\neq 2\), as stated. The reason is that each \(P_{j}\) is not merely prime but \textbf{absolutely} prime. By \thmref{Definition}{def-primecomponents}, \(P_{j}\) \textbf{is} the kernel of an \(\mathbb{F}\)-algebra homomorphism \(\varphi_{j}:R_{k} \rightarrow \mathbb{F}\left\lbrack t_{1},\ldots,t_{3k - 2} \right\rbrack\) into a polynomial ring, so \(R_{k}/P_{j}\) embeds in that polynomial ring. Since \(\overline{\mathbb{F}}\) is flat over \(\mathbb{F}\), tensoring preserves the embedding: \begin{equation}\fitdisplay{\overline{\mathbb{F}} \otimes_{\mathbb{F}}\left( R_{k}/P_{j} \right) \hookrightarrow \overline{\mathbb{F}}\left\lbrack t_{1},\ldots,t_{m} \right\rbrack,}\end{equation} whose target is a domain, so the extension of \(P_{j}\) to \(\overline{\mathbb{F}} \otimes_{\mathbb{F}}R_{k}\) is again prime. Formation of a finite intersection of ideals likewise commutes with the flat base change \(\mathbb{F} \rightarrow \overline{\mathbb{F}}\). Hence the displayed decomposition, established over \(\overline{\mathbb{F}}\), holds already over \(\mathbb{F}\), with the same \(k - 1\) primes.

\end{remark}

\begin{theorem}{Primary Decomposition Theorem}{thm-primarydecomp} For every \(k \geq 2\), over any field of characteristic \(\neq 2\): \(I_{k}\) has exactly \(k - 1\) irreducible, radical, primary components, namely \(P_{2},\ldots,P_{k}\) of \thmref{Definition}{def-primecomponents}, and \begin{equation}\fitdisplay{I_{k} = P_{2} \cap \cdots \cap P_{k}}\end{equation} is an irredundant primary decomposition into \(k - 1\) primes.

\end{theorem}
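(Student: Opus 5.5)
The proof assembles results already in hand, working first over $\overline{\mathbb{F}}$ and then descending to $\mathbb{F}$ via \thmref{Remark}{rem-base-field}.

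\textbf{Radical ideal equals the intersection of its minimal primes.} Over $\overline{\mathbb{F}}$, \thmref{Theorem}{thm-radical} makes $I_k$ radical. A radical ideal in a Noetherian ring is the intersection of its minimal primes. By the Nullstellensatz, these minimal primes are the vanishing ideals of the irreducible components of $V(I_k)$.

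\textbf{Identifying the components with the $P_j$.} \thmref{Theorem}{thm-componentcount} says there are exactly $k-1$ components. \thmref{Proposition}{prop-rational} identifies the $j$-th component as the closure of the image of $\Psi_j$, whose vanishing ideal is $P_j = \ker\varphi_j$. Hence $I_k\otimes\overline{\mathbb{F}} = \bigcap_{j=2}^k (P_j\otimes\overline{\mathbb{F}})$.

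\textbf{Irredundancy.} Irredundancy is equivalent to pairwise non-containment of the $P_j$, and this is equivalent to non-containment of the components. I would take it directly from the component count's induction: \thmref{Proposition}{prop-y2-not-in-y1} and \thmref{Proposition}{prop-y1-not-in-y2}. As a cross-check, the $P_j$ are distinct because the $j$-th one involves exactly the blocks $1,\dots,j$. Concretely, $P_j\not\subseteq P_{j'}$ for $j>j'$ is witnessed by the rank-2 points of \thmref{Lemma}{lem-witness-new} and \thmref{Lemma}{lem-witness-inherited}, each lying on a unique component. Each $P_j$ is prime, hence primary and radical, and its variety is irreducible. This gives the descriptors ``irreducible, radical, primary'' in the statement.

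\textbf{Descent to $\mathbb{F}$.} Here I would invoke \thmref{Remark}{rem-base-field}: each $P_j$ is absolutely prime, being the kernel of a map into a polynomial ring, and the map stays injective after the flat base change. Flat base change also commutes with finite intersections. So $I_k\overline{R} = \bigcap_j P_j\overline{R}$ equals $\bigl(\bigcap_j P_j\bigr)\overline{R}$. Since $R_k\to\overline{\mathbb{F}}\otimes R_k$ is faithfully flat, contraction gives $I_k = \bigcap_j P_j$ over $\mathbb{F}$.

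Primeness of each $P_j$ over $\mathbb{F}$ is immediate from \thmref{Definition}{def-primecomponents}. Irredundancy descends because a containment $P_j\subseteq P_{j'}$ over $\mathbb{F}$ would extend to one over $\overline{\mathbb{F}}$.

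\textbf{Main obstacle.} The delicate step is making sure that extension to $\overline{\mathbb{F}}$ of the kernel $P_j$ is exactly the vanishing ideal of the $j$-th geometric component. This requires that $\ker(\varphi_j\otimes\overline{\mathbb{F}}) = (\ker\varphi_j)\otimes\overline{\mathbb{F}}$, which follows from flatness of $\overline{\mathbb{F}}$ over $\mathbb{F}$. It also requires that \thmref{Proposition}{prop-rational}'s identification of the components is genuinely the one produced by \thmref{Theorem}{thm-componentcount}'s recursive splitting. I would verify this by induction on $k$: the component indexed $j<k$ lies in $Y_1$ and is inherited as $(\text{component at level } k-1)\times\mathbb{A}^3$, and $P_j$ is independent of the ambient $k$.
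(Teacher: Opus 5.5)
Your proposal is correct and follows essentially the paper's route. Radicality reduces $I_k$ to the intersection of its minimal primes; \thmref{Theorem}{thm-componentcount} together with \thmref{Proposition}{prop-rational} identifies these over $\overline{\mathbb{F}}$ with the extensions of $P_2,\ldots,P_k$; irredundancy comes from \thmref{Proposition}{prop-y2-not-in-y1} and \thmref{Proposition}{prop-y1-not-in-y2}; and the conclusion descends as in \thmref{Remark}{rem-base-field}. Your explicit appeals to flatness, giving $\ker(\varphi_j\otimes\overline{\mathbb{F}})=(\ker\varphi_j)\otimes\overline{\mathbb{F}}$, and to faithful flatness of $R_k\to\overline{\mathbb{F}}\otimes_{\mathbb{F}}R_k$ for contracting the decomposition back to $R_k$, spell out steps the paper leaves implicit without changing the argument.
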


\begin{proof}[{Proof of \thmref{Theorem}{thm-primarydecomp}}] The two halves were proven independently above, so this combines them in full rather than asserting the conjunction in one line.

\textbf{The decomposition itself.} By \thmref{Theorem}{thm-radical}, \(I_{k}\) is radical, so \(I_{k} = \sqrt{I_{k}}\). A radical ideal in a Noetherian ring equals the intersection of its own minimal primes: \begin{equation}\fitdisplay{I_{k} = \cap_{P \in \text{ Min}\left( I_{k} \right)}P.}\end{equation} This holds because \(\sqrt{I_{k}}\) is the intersection of \textbf{all} primes containing \(I_{k}\) -- Krull's theorem, not the definition of the radical, which is instead the set of elements with a power in \(I_{k}\) -- and the minimal primes already suffice. Every non-minimal prime containing \(I_{k}\) contains a minimal one, and is therefore redundant in the intersection.

\thmref{Theorem}{thm-componentcount} identifies \(\text{Min}\left( I_{k} \right)\) explicitly, over \(\overline{\mathbb{F}}\) as \thmref{Remark}{rem-base-field} requires: exactly \(k - 1\) primes, one per block pattern \(j = 2,\ldots,k\), named \(P_{2},\ldots,P_{k}\) in \thmref{Definition}{def-primecomponents}. Substituting them gives the displayed equation.

\textbf{Why the components are primary.} Each \(P_{j}\) is prime, so it is trivially its own primary-decomposition component, being \(P_{j}\)-primary with associated prime \(P_{j}\). Nothing beyond primality has to be argued.

\textbf{Why they are prime.} Nothing is left to prove here either. \thmref{Definition}{def-primecomponents} \textbf{defines} \(P_{j}\) as \(\ker\varphi_{j}\), and the kernel of a homomorphism into a domain is prime. That argument covers \(j = 2\) along with the rest, at no extra cost. \thmref{Lemma}{lem-y2-irred} would give the same conclusion for \(j \geq 3\), but it is stated for \(k \geq 3\) and so misses \(j = 2\); \(P_{2}\) is \(I_{2}\) itself, the ideal of maximal minors of a generic \(2 \times 3\) matrix, which is classically prime. Neither of those routes is needed.

Extending \(P_{j}\) from \(R_{j}\) to \(R_{k}\) by adjoining the free blocks \({\overline{v}}_{j + 1},\ldots,{\overline{v}}_{k}\) preserves primality, since \(R_{k}/P_{j}R_{k}\) is a polynomial ring over the domain \(R_{j}/P_{j}\).

\textbf{Irredundancy.} No \(P_{j}\) can be omitted, i.e. no \(P_{j}\) contains \(\cap_{i \neq j}P_{i}\). This is exactly \thmref{Theorem}{thm-componentcount}'s own no-containment conclusion, obtained from \thmref{Proposition}{prop-y2-not-in-y1} and \thmref{Proposition}{prop-y1-not-in-y2} applied at every level of that theorem's induction. It needs no further proof here.

\end{proof}

\begin{remark}{summary}{auto8} \thmref{Theorem}{thm-codim} (codimension 2), \thmref{Theorem}{thm-mingens} (3 minimal generators), \thmref{Theorem}{thm-cm} (Cohen-Macaulay), and \thmref{Theorem}{thm-radical} (radical) together say exactly what a single bundled ``\(I_{k}\) has codimension 2, is radical, has exactly 3 minimal generators, and is Cohen-Macaulay'' statement would -- now as four independently-proven results, each established at the point in this article where its actual dependencies (and no others) were available, rather than one claim proven in parts across several sections.

\end{remark}

\section{Summary}

For every \(k \geq 2\), over any field \(\mathbb{F}\) of characteristic \(\neq 2\):

\begin{itemize}
\item
  \(I_{k}\) has codimension 2 (\thmref{Theorem}{thm-codim}), presented as the ideal of \(2 \times 2\) minors of an explicit \(2 \times 3\) matrix (\thmref{Theorem}{thm-determinantal}) -- a genuine determinantal ideal, not merely determinantal-looking.
\item
  \(I_{k}\) has exactly 3 minimal generators (\thmref{Theorem}{thm-mingens}), proven directly from linear independence, independent of the determinantal presentation.
\item
  \(R_{k}/I_{k}\) is Cohen-Macaulay (\thmref{Theorem}{thm-cm}), from the determinantal presentation and classical Eagon-Northcott/Hilbert-Burch theory (\thmref{Proposition}{prop-detideal}).
\item
  \(I_{k}\) is radical (\thmref{Theorem}{thm-radical}), via Serre's criterion: Cohen-Macaulayness gives \(\left( S_{1} \right)\) (\thmref{Lemma}{lem-serre}), and an explicit rank-2 Jacobian witness on every component gives \(\left( R_{0} \right)\) (\thmref{Lemma}{lem-witness-new}, \thmref{Lemma}{lem-witness-inherited}).
\item
  \(V\left( I_{k} \right)\) has exactly \(k - 1\) irreducible components (\thmref{Theorem}{thm-componentcount}), each rational -- in fact birational to an explicit affine parameter space (\thmref{Proposition}{prop-rational}). Radicality and this component count together -- rationality plays no part in the assembly -- give the irredundant primary decomposition of the Primary Decomposition Theorem (\thmref{Theorem}{thm-primarydecomp}): \(I_{k} = P_{2} \cap \cdots \cap P_{k}\).
\end{itemize}

\section{Methods}\label{sec-methods}

This article was produced by a workflow that is not the usual one. Since the workflow bears on how some of the claims above should be read, it is set out here rather than left to be inferred.

\textbf{Computer algebra, in two roles that should not be confused.} SageMath and Macaulay2 were used throughout, for two quite different purposes. The first is \textbf{conjecture}: the component count of \thmref{Theorem}{thm-componentcount}, its block-involvement pattern, and the shape of the primary decomposition were found by computing primary decompositions of \(I_{k}\) at small \(k\) and reading off the pattern, well before any of it was proved. The second is \textbf{verification}: once proved, the general statements were checked against independent computation --- the determinantal structure of \thmref{Theorem}{thm-determinantal} by Gröbner-basis equality in both directions and, separately, against ordinary \(2 \times 2\) matrix multiplication, the two routes sharing no code; the Jacobian recursion of \thmref{Lemma}{lem-jacrecursion} as a polynomial identity rather than at sampled points; the witness points of \thmref{Lemma}{lem-witness-new} and \thmref{Lemma}{lem-witness-inherited} confirmed to lie on \(V\left( I_{k} \right)\) with the stated Jacobian rank, and on exactly the intended component; and the decomposition itself recomputed at \(k = 3,4,5,6\).

Because rank, dimension and radicality are \textbf{not} characteristic-independent --- a matrix of rank 3 over \(\mathbb{Q}\) may drop rank modulo \(p\) --- computations over \(\mathbb{Q}\) license nothing about \(\mathbb{F}_{p}\), and the four main results were therefore re-run over \(\mathbb{F}_{2}\), \(\mathbb{F}_{3}\), \(\mathbb{F}_{4}\), \(\mathbb{F}_{5}\), \(\mathbb{F}_{7}\) and \(\mathbb{F}_{9}\) as well. No deviation appeared. The scripts are named at the point of use, and are included with this submission as ancillary files under \texttt{anc/code/}, with their full output transcripts under \texttt{anc/runs/}; the paths cited in the text are the paths inside that directory. The bibliography is written for a reader and carries no access notes; \texttt{anc/runs/REFERENCE-CERTIFICATE.md} records instead, for each source, the exact statement relied on, whether it was read in the source or only confirmed from publisher metadata, and the two citation errors that earlier drafts carried. The explicit computations they produce are displayed in \cref{sec-appendix}.

Nothing in this article is conjectural, or checked only at small \(k\): every statement is proven for \textbf{every} \(k \geq 2\), over an \textbf{arbitrary} field of the stated characteristic. The computational thread corroborates the general proofs; nowhere does it substitute for one.

\textbf{Source of truth, and a partial formalization.} The manuscript is written in Typst, and that source is canonical: the LaTeX is generated from it mechanically, so the two cannot drift apart. The same source supplied the statements for a partial formalization in Lean 4 against \emph{mathlib}. Machine-checked, with no proof gaps and depending on no axioms beyond the three that every \emph{mathlib} development uses: the basic identity (\thmref{Theorem}{thm-basic-identity}), in the strengthened form that drops the traceless hypothesis; the minimal-generator theorem (\thmref{Theorem}{thm-mingens}) for the actual \({\overline{w}}_{k}\), together with the whole chain beneath it --- the general lemma that independent same-degree generators are minimal (\thmref{Lemma}{lem-mingens-generic}), the homogeneity of \({\overline{w}}_{k}\), the freshness of each new variable block, and the linear independence itself (\thmref{Lemma}{lem-mingens-indep}); the Jacobian recursion (\thmref{Lemma}{lem-jacrecursion}); and the smaller computational claims of \S{}3 and \S{}5, including the identity of \thmref{Remark}{rem-why-D}.

\textbf{Not} formalized: everything resting on determinantal-ideal theory or on dimension theory --- the codimension (\thmref{Theorem}{thm-codim}), Cohen-Macaulayness (\thmref{Theorem}{thm-cm}), radicality (\thmref{Theorem}{thm-radical}), the component count (\thmref{Theorem}{thm-componentcount}) and the Primary Decomposition Theorem (\thmref{Theorem}{thm-primarydecomp}) --- for which the necessary libraries do not presently exist. The formalization is therefore a \textbf{partial} guarantee, and is described as one: it should not be read as covering the article's main results.

\textbf{On the explicitness of the exposition.} This article is more explicit than a paper of its length usually is. Steps that would ordinarily be left to the reader are written out, definitions that would ordinarily be recalled in passing are stated, and there are more worked examples than the proofs strictly require --- the \(2 \times 3\) matrix of \thmref{Theorem}{thm-determinantal} displayed at three successive \(k\), the components at \(k = 3\) and \(k = 4\) given by their generators, the non-degeneracy witness tabulated, and the explicit computations collected in \cref{sec-appendix}.

That was deliberate, for two reasons. The first concerns the reader: a manuscript produced with substantial machine assistance, and only partially formalized, invites a scepticism that is entirely reasonable, and the honest answer to it is not reassurance but exposure --- to write the argument out at a level of detail at which a determined reader can check it without reconstructing the missing steps, and to say plainly, as \cref{sec-methods} does, which parts are machine-checked and which are not. The second concerns the author: writing at this level of explicitness is how the author satisfied himself that he understood the argument rather than merely followed it, and several of the corrections recorded below were found precisely because a step that had seemed obvious stopped being obvious once it had to be written down in full.

The same reasoning produced a large body of short standalone notes kept alongside the research, one question to a note. Their first purpose was the author's own understanding. Their second turned out to be discovery: the signed encoding \(\text{enc}\) used throughout this article --- which replaced an earlier coordinate choice that forced a coordinate swap and two factors of 2 --- emerged from one such note, written to answer nothing more ambitious than whether a different encoding might be tidier.

\textbf{Adversarial review.} The manuscript was put through five rounds of deliberately adversarial review, each instructed to find errors rather than to approve, and each required to verify by computation rather than by reading. This was not a formality. The rounds found: a genuine gap in the proof of \thmref{Lemma}{lem-mingens-indep}, where non-degeneracy was doing work that only linear independence can do; explicit displays that had not been recomputed after a change of encoding, and so still showed the superseded values; a false statement about the state of the literature, contradicted by the very sources cited for it; a citation to the wrong tag of a reference work, made under an explicit assurance that it had been verified; a stretch of prose that described the formulas printed beside it incorrectly; and the gap in generality that \thmref{Remark}{rem-base-field} now closes. All are corrected above. The mention is not decorative: a reader is entitled to know what kind of error an adversarial pass actually catches, and to calibrate accordingly.

\textbf{What this workflow is, and is not, good for.} Its strength is coverage of the mechanical: a recomputation notices a stale display, an inconsistent index or an unverified citation reliably, and does so at a scale and patience that re-reading does not sustain. Its weakness is that none of that establishes that the argument is \textbf{right}; every substantive correction above was found by an adversarial reading, not by a passing test. The machine checks and the Lean formalization narrow the space in which an error can hide; they do not empty it, and the responsibility statement below is meant literally.

\section{Acknowledgements}

The author first learned of the ring of commuting matrices some thirty years ago from Professor Jan-Erik Roos, in a course he held on the then fledgling field of computer algebra. It is a pleasant debt to record here, given how much of what follows was found with a computer algebra system open beside the page.

The computations were carried out in SageMath and Macaulay2, and the formalization in Lean 4 with \emph{mathlib}; this article would have a different and smaller content without them. The nature and extent of the assistance from a large language model is set out in \S{}10 and in the disclosure below.

\section{Disclosure of AI assistance}

This manuscript was produced with substantial assistance from a large language model, Claude (Anthropic), and the extent of it is stated here rather than left to be inferred. The itemised account is \S{}10: which results were machine-checked and which were not, what the computer algebra established and what it merely corroborated, and what the adversarial review found. Claude is not a co-author, and is not credited as one.

Every computational claim has been checked along a route independent of the statement being tested, and the reference list has been checked against the sources cited --- with the one exception that where a source is a printed book outside this project's open-access policy, an independently verified equivalent is cited alongside it. The author has verified the results to the best of his ability, made the final edits, and assumes full responsibility for any errors, mistakes or gaps that remain.

\section{Appendix: the explicit computations}\label{sec-appendix}

The results above are stated with their computations named rather than displayed, so that the argument stays legible. This appendix displays them. Everything here is \textbf{corroboration}: no proof in the article rests on a computation, and nothing below is offered as one. Each block names the script that produced it, and the scripts ship with this submission as ancillary files under \texttt{anc/code/}, with their full transcripts under \texttt{anc/runs/}.

One naming point, since the scripts ship as they are. Most of them were written while the sign twist was called \(D\), and they still write \texttt{D} internally for the matrix this article calls \(S\). It is the same matrix, \(\text{diag}( - 1, - 1,1)\); only the article's name for it changed. Their contents were not rewritten for the rename, on the grounds that editing working verification code for cosmetic reasons is a worse risk than a sentence saying so.

Every polynomial below was recomputed for this appendix from the recursion of \thmref{Definition}{def-wk} rather than transcribed from an earlier draft. That distinction is not pedantry here: the article changed encoding partway through its life, from the reshuffle \(\tau(a,b,c) = (b,a,2c)\) to \(S\), and a computed polynomial cannot be carried across such a change by substituting symbols --- it has to be recomputed. Output that still carries a literal coefficient \(2\) in the \(\overline{v}\)-coordinates is the tell that it was not.

\subsection{Both sides of the basic identity}\label{app-identity}

\thmref{Theorem}{thm-basic-identity} asserts \(\text{enc}\left( \left\lbrack A_{0},B_{0} \right\rbrack \right) = {S\left( \text{enc}\left( A_{0} \right) \times \text{ enc}\left( B_{0} \right) \right)}\). Expanded over generic entries \(a_{11},\ldots,b_{22}\), the two sides are, component by component:

\begin{verbatim}
lhs_1 = rhs_1 = -2*a21*b12 + 2*a12*b21
lhs_2 = rhs_2 = -a12*b11 + a21*b11 + a11*b12 - a22*b12 - a11*b21 + a22*b21
                + a12*b22 - a21*b22
lhs_3 = rhs_3 = -a12*b11 - a21*b11 + a11*b12 - a22*b12 + a11*b21 - a22*b21
                + a12*b22 + a21*b22
\end{verbatim}

The difference is \((0,0,0)\) identically. Chaining once more gives the triple commutator \(\text{enc}\left( \left\lbrack \lbrack A,B\rbrack,C \right\rbrack \right)\), which the recursion computes as \(S\left( {S\left( \text{enc}(A) \times \text{ enc}(B) \right)} \times \text{ enc}(C) \right)\); its three components have \(8\), \(20\) and \(20\) terms:

\begin{verbatim}
comp_1 = -2*a21*b11*c12 + 2*a11*b21*c12 - 2*a22*b21*c12 + 2*a21*b22*c12
         - 2*a12*b11*c21 + 2*a11*b12*c21 - 2*a22*b12*c21 + 2*a12*b22*c21

comp_2 = a12*b11*c11 + a21*b11*c11 - a11*b12*c11 + a22*b12*c11 - a11*b21*c11
         + a22*b21*c11 - a12*b22*c11 - a21*b22*c11 - 2*a21*b12*c12
         + 2*a12*b21*c12 + 2*a21*b12*c21 - 2*a12*b21*c21 - a12*b11*c22
         - a21*b11*c22 + a11*b12*c22 - a22*b12*c22 + a11*b21*c22
         - a22*b21*c22 + a12*b22*c22 + a21*b22*c22

comp_3 = a12*b11*c11 - a21*b11*c11 - a11*b12*c11 + a22*b12*c11 + a11*b21*c11
         - a22*b21*c11 - a12*b22*c11 + a21*b22*c11 - 2*a21*b12*c12
         + 2*a12*b21*c12 - 2*a21*b12*c21 + 2*a12*b21*c21 - a12*b11*c22
         + a21*b11*c22 + a11*b12*c22 - a22*b12*c22 - a11*b21*c22
         + a22*b21*c22 + a12*b22*c22 - a21*b22*c22
\end{verbatim}

The factors of \(2\) here are expected and are \textbf{not} a symptom of the retired encoding: on the traceless part \(a_{22} = - a_{11}\), so \(d_{a} = a_{11} - a_{22} = 2a_{11}\). The article's claim that no \(2\) appears is about the \(\overline{v}\)-coordinates of \thmref{Example}{ex-wk-signed}, which is a different expansion in different variables. Both routes agree exactly (anc/code/regenerate\_displays\_S.sage, anc/code/verify\_lorentzian\_encoding\_identity.sage).

\subsection{\texorpdfstring{The three assertions of \thmref{Remark}{rem-why-D}}{The three assertions of {[}rem-why-D{]}}}\label{app-remark2}

\textbf{The ideal is unchanged by the twist.} The three components of \({\overline{w}}_{2}\), which are also the entries of \(S\left( {\overline{w}}_{2} \right)\) up to sign, are \begin{equation}\fitdisplay{- v_{1,3}v_{2,2} + v_{1,2}v_{2,3},\quad v_{1,3}v_{2,1} - v_{1,1}v_{2,3},\quad - v_{1,2}v_{2,1} + v_{1,1}v_{2,2},}\end{equation} so \(\text{ideal}\left( {\overline{w}}_{2} \right) = \text{ ideal}\left( S{\overline{w}}_{2} \right)\) generator by generator: two of the three are negated and the third is fixed, and negating a generator does not change the ideal it generates. Confirmed independently by Gröbner-basis equality in anc/code/verify\_encoding\_gl3\_equivariance.sage.

\textbf{The Lorentzian form.} With \(A_{0}\) the traceless part of a generic \(A\), the polynomial \(- {\text{enc}\left( A_{0} \right)}_{1}^{2} - {\text{ enc}\left( A_{0} \right)}_{2}^{2} + {\text{ enc}\left( A_{0} \right)}_{3}^{2} - 4\det(A_{0})\) is identically \(0\) --- not zero at sampled points, but the zero polynomial in \(a_{11},\ldots,a_{22}\).

\textbf{The involution, and the \(\mathbb{F}_{3}\) congruence.} \(S^{2} = I\) exactly, while \(\tau^{2} = \text{ diag}(1,1,4) \neq I\) and \(\det\tau = - 2\). Over \(\mathbb{F}_{3}\), with \(T = \begin{pmatrix}
0 & 0 & 1 \\
1 & 1 & 0 \\
1 & 2 & 0
\end{pmatrix}\) of determinant \(1\), \begin{equation}\fitdisplay{T^{\top}T = \text{ diag}(2,2,1) = \text{ diag}( - 1, - 1,1)\quad\text{ in }\mathbb{F}_{3},}\end{equation} so \(S\)'s form is congruent to the identity form there. This is why the article's claim is stated for formally real fields rather than for fields in which \(- 1\) is a non-square: the latter condition is the wrong one, and \(\mathbb{F}_{3}\) is the counterexample.

\subsection{\texorpdfstring{The generators of \(I_{6}\)}{The generators of I\_\{6\}}}\label{app-i6}

\thmref{Theorem}{thm-mingens} gives \(I_{k}\) three minimal generators for every \(k\). At \(k = 6\) they are degree-\(6\) forms in the \(18\) variables \(v_{1,1},\ldots,v_{6,3}\), with \(32\) terms each and \textbf{every coefficient equal to \(\pm 1\)}:

\begin{verbatim}
w_6,1 = -v1_3*v2_1*v3_1*v4_1*v5_1*v6_2 + v1_1*v2_3*v3_1*v4_1*v5_1*v6_2
        - v1_3*v2_2*v3_2*v4_1*v5_1*v6_2 + v1_2*v2_3*v3_2*v4_1*v5_1*v6_2
        - v1_2*v2_1*v3_2*v4_3*v5_1*v6_2 + v1_1*v2_2*v3_2*v4_3*v5_1*v6_2
        + v1_3*v2_1*v3_3*v4_3*v5_1*v6_2 - v1_1*v2_3*v3_3*v4_3*v5_1*v6_2
        - v1_3*v2_1*v3_1*v4_2*v5_2*v6_2 + v1_1*v2_3*v3_1*v4_2*v5_2*v6_2
        - v1_3*v2_2*v3_2*v4_2*v5_2*v6_2 + v1_2*v2_3*v3_2*v4_2*v5_2*v6_2
        + v1_2*v2_1*v3_1*v4_3*v5_2*v6_2 - v1_1*v2_2*v3_1*v4_3*v5_2*v6_2
        + v1_3*v2_2*v3_3*v4_3*v5_2*v6_2 - v1_2*v2_3*v3_3*v4_3*v5_2*v6_2
        + v1_2*v2_1*v3_1*v4_1*v5_1*v6_3 - v1_1*v2_2*v3_1*v4_1*v5_1*v6_3
        + v1_3*v2_2*v3_3*v4_1*v5_1*v6_3 - v1_2*v2_3*v3_3*v4_1*v5_1*v6_3
        + v1_2*v2_1*v3_2*v4_2*v5_1*v6_3 - v1_1*v2_2*v3_2*v4_2*v5_1*v6_3
        - v1_3*v2_1*v3_3*v4_2*v5_1*v6_3 + v1_1*v2_3*v3_3*v4_2*v5_1*v6_3
        + v1_3*v2_1*v3_1*v4_2*v5_3*v6_3 - v1_1*v2_3*v3_1*v4_2*v5_3*v6_3
        + v1_3*v2_2*v3_2*v4_2*v5_3*v6_3 - v1_2*v2_3*v3_2*v4_2*v5_3*v6_3
        - v1_2*v2_1*v3_1*v4_3*v5_3*v6_3 + v1_1*v2_2*v3_1*v4_3*v5_3*v6_3
        - v1_3*v2_2*v3_3*v4_3*v5_3*v6_3 + v1_2*v2_3*v3_3*v4_3*v5_3*v6_3
\end{verbatim}

The other two components, \(w_{6,2}\) and \(w_{6,3}\), have the same shape and the same \(32\)-term, \(\pm 1\) profile; all three are printed in full in anc/code/regenerate\_displays\_S.sage's transcript, which ships as \texttt{anc/runs/display-ground-truth-S.txt}. The script asserts the coefficient profile rather than merely printing it, so a regression to the retired encoding fails the run instead of producing a plausible-looking table.

\subsection{The non-degeneracy witness}\label{app-witness}

The following table evaluates the point constructed in the proof of \thmref{Lemma}{lem-nondeg}.

\begin{verbatim}
j    witness (v_1,...,v_j)              w_j
2    (e1,e2)                            e3
3    (e1,e2,e1)                         e2
4    (e1,e2,e1,e1)                      e3
5    (e1,e2,e1,e1,e1)                   e2
6    (e1,e2,e1,e1,e1,e1)                e3
7    (e1,e2,e1,e1,e1,e1,e1)             e2
8    (e1,e2,e1,e1,e1,e1,e1,e1)          e3
\end{verbatim}

Each level is checked twice in anc/code/verify\_nondegeneracy\_witness.sage: once as the induction runs, and once by feeding the finished witness back through \thmref{Definition}{def-wk}'s recursion as a point of \(\mathbb{A}^{3j}\), which is how \thmref{Lemma}{lem-nondeg} states it. The same script runs the retired \(\tau\) alongside, where the witness must alternate, \(\left( e_{1},e_{2},e_{1},e_{2},\ldots \right)\), and the images are \((0,2,0)\), \((0,0,2)\), \((0,4,0)\), \(\ldots\), doubling at every level and reaching \(8\) by \(j = 8\). That column reproduces values recorded before the encoding changed, which is what makes the \(S\) column above a check rather than merely a new computation.

\subsection{Independence of the generators}\label{app-independence}

This is the corroboration cited in the proof of \thmref{Lemma}{lem-mingens-indep}. For each \(k\), the three components of \({\overline{w}}_{k}\) are expanded over the monomials that actually occur and the resulting \(3 \times N\) coefficient matrix is checked to have rank \(3\) --- which is exactly the assertion that no nontrivial constant relation holds among them:

\begin{longtable}[]{@{}cccccc@{}}
\toprule\noalign{}
\endhead
\bottomrule\noalign{}
\endlastfoot
\(k\) & \#variables & degree & \#monomials & rank & independent? \\
2 & 6 & 2 & 6 & 3 & yes \\
3 & 9 & 3 & 12 & 3 & yes \\
4 & 12 & 4 & 24 & 3 & yes \\
5 & 15 & 5 & 48 & 3 & yes \\
6 & 18 & 6 & 96 & 3 & yes \\
\end{longtable}

The monomial count doubles at each step while the rank stays at \(3\), so the margin widens rather than narrows as \(k\) grows (anc/code/verify\_generator\_independence.sage).


\section{References}

\begin{thebibliography}{99}
\bibitem{BrunsVetter1988} W. Bruns, U. Vetter, \emph{Determinantal Rings}, Lecture Notes in Mathematics 1327, Springer-Verlag, 1988. Out of print; freely available with Springer's permission at \href{https://www.home.uni-osnabrueck.de/wbruns/brunsw/detrings.pdf}{the first author's own page}.  \bibitem{EagonNorthcott1962} J.A. Eagon, D.G. Northcott, \emph{Ideals defined by matrices and a certain complex associated with them}, Proc. Roy. Soc. London Ser. A 269(1337) (1962), 188-204. DOI: \href{https://doi.org/10.1098/rspa.1962.0170}{10.1098/rspa.1962.0170}.  \bibitem{MotzkinTaussky1955} T. Motzkin, O. Taussky, \emph{Pairs of matrices with property L. II}, Trans. Amer. Math. Soc. 80 (1955), 387-401.  \bibitem{Stacks00OS} \emph{The Stacks Project}, tag 00OS (dimension formula for finitely generated field extensions / equicodimensionality of affine domains), and equivalently D. Eisenbud, \emph{Commutative Algebra with a View Toward Algebraic Geometry}, Springer GTM 150, 1995, Cor. 13.4. Open access at \href{https://stacks.math.columbia.edu/tag/00OS}{stacks.math.columbia.edu/tag/00OS}.  \bibitem{Gerstenhaber1961} M. Gerstenhaber, \emph{On dominance and varieties of commuting matrices}, Ann. of Math. 73 (1961), 324-348.  \bibitem{ArtinHochsterConjecture} M. Artin, M. Hochster, attributed conjecture: for \(I\) the ideal generated by the entries of \(XY - YX\), \(\text{Spec}(R/I)\) is reduced and Cohen-Macaulay. No single dateable original publication found; stated as ``Conjecture 1.1 (M. Artin, M. Hochster)'' in Majidi-Zolbanin and Snapp below.  \bibitem{MajidiZolbaninSnapp} M. Majidi-Zolbanin, B. Snapp, \emph{A note on the variety of pairs of matrices whose product is symmetric}, in \emph{Commutative Algebra and Its Connections to Geometry}, Contemporary Mathematics 555, American Mathematical Society, 2011, pp. 145-150. Author's open-access copy at \href{https://people.math.osu.edu/snapp.14/SymmetrizingVariety.pdf}{people.math.osu.edu/ snapp.14/SymmetrizingVariety.pdf}.  \bibitem{Chau2024} T. Chau, \emph{The \(F\)-regularity of algebraic sets related to commutator matrices}, J. Algebra 652 (2024), 52-66. DOI: \href{https://doi.org/10.1016/j.jalgebra.2024.04.008}{10.1016/j.jalgebra.2024.04.008}.  \bibitem{Hreinsdottir2005} F. Hreinsdóttir, \emph{Conjectures on the ring of commuting matrices}, arXiv:math/0501465. Open access at \href{https://arxiv.org/abs/math/0501465}{arxiv.org/abs/math/0501465}.  \bibitem{Knutson2003} A. Knutson, \emph{Some Schemes Related to the Commuting Variety}, J. Algebraic Geom. 14 (2005), 283-294; preprint arXiv:math/0306275. (The arXiv abstract page's ``to appear'' note long predates publication.) Open access at \href{https://arxiv.org/abs/math/0306275}{arxiv.org/abs/math/0306275}.  \bibitem{Youcis2014} A. Youcis, answer to ``What can we say about the image of a regular map?'', Mathematics Stack Exchange, question 731178, answer 732415, 2014. States both the point-set-topology fact that the continuous image of an irreducible space is irreducible, and the \((x,y) \mapsto (x,xy)\) non-closed-image example cited above. CC BY-SA 3.0. Open access at \href{https://math.stackexchange.com/questions/731178/what-can-we-say-about-the-image-of-a-regular-map}{math.stackexchange.com/questions/731178}.  \bibitem{Stacks031R} \emph{The Stacks Project}, tag \textbf{031R} = Lemma 10.157.3, Serre's criterion for reducedness: ``Let \(R\) be a Noetherian ring. The following are equivalent: (1) \(R\) is reduced, and (2) \(R\) has properties \(\left( R_{0} \right)\) and \(\left( S_{1} \right)\).'' Open access at \href{https://stacks.math.columbia.edu/tag/031R}{stacks.math.columbia.edu/tag/031R}. Note that the enclosing \textbf{section} 10.157 is tag 031O and is titled ``Serre's criterion for \emph{normality}''; its own Lemma 10.157.4 (tag 031S) is the normality criterion \(\left( R_{1} \right)\) and \(\left( S_{2} \right)\), a different statement. 

\bibitem{Tambour2000} T. Tambour, \emph{The Number of Solutions of Some Equations in Finite Groups and a New Proof of Itô's Theorem}, Communications in Algebra \textbf{28} (2000), no. 11, 5353--5361. \href{https://doi.org/10.1080/00927870008827160}{doi:10.1080/00927870008827160}. The published version is not open access; freely available as the earlier \emph{On the number of solutions of some equations in finite groups}, Stockholm University Research Reports in Mathematics No. 15 (1998), at \href{https://www2.math.su.se/reports/1998/15/}{www2.math.su.se/reports/1998/15}. 

\end{thebibliography}
\end{document}